\newtheorem{Theorem}{Theorem}[section]
\newtheorem{Lemma}[Theorem]{Lemma}
\newtheorem{Proposition}[Theorem]{Proposition}
\theoremstyle{definition}
\numberwithin{equation}{section}
\newcommand{\ma}{\mathcal}
\newcommand{\la}{\lambda}
\newcommand{\N}{\mathbb{N}}   
\newcommand{\R}{\mathbb{R}}
\newcommand{\C}{\mathbb{C}}
\newcommand{\A}{\mathbf{A}} 
\newcommand{\W}{\mathbf{W}} 
\newcommand{\E}{\mathbf{E}} 
\newcommand{\ve}{\vartheta}   
\newcommand{\M}{m}  
\DeclareMathOperator{\dist}{dist}
\DeclareMathOperator{\supp}{\mathrm{supp}}
\let\Re\relax
\DeclareMathOperator{\Re}{\mathrm{Re}}
\let\Im\relax
\DeclareMathOperator{\Im}{\mathrm{Im}}
\newcommand{\abs}[1]{\lvert #1 \rvert}          
\newcommand{\norm}[1]{\lVert #1 \rVert} 
\newcommand{\br}[1]{\langle #1 \rangle} 
\newcommand{\ol}[1]{\overline{#1}}
\newcommand{\ebreve}{\,\breve{\rule{0pt}{6pt}}\,}
\newcommand{\p}{\partial}
\newcommand{\eps}{\varepsilon}
\newcommand{\bk}[1]{\left\{ #1 \right\}}
\newcounter{sidenote}
\begin{document}

\title[The fixed angle scattering problem]{The fixed angle scattering problem with a first order perturbation} 

\author[C. J. Meroño]{Cristóbal J. Meroño}
\address{Universidad Politécnica de Madrid, ETSI Caminos, Departmento de Matemática e Informática, Campus Ciudad Universitaria, Calle del Prof. Aranguren, 3, 28040 Madrid}
\email{cj.merono@upm.es}
\author[L. Potenciano-Machado]{Leyter Potenciano-Machado}
\address{University of Jyvaskyla, Department of Mathematics and Statistics, PO Box 35, 40014 University of Jyvaskyla, Finland}
\email{leyter.m.potenciano@jyu.fi}
\author[M. Salo]{Mikko Salo}
\address{University of Jyvaskyla, Department of Mathematics and Statistics, PO Box 35, 40014 University of Jyvaskyla, Finland}
\email{mikko.j.salo@jyu.fi}



\begin{abstract}
We study the inverse scattering problem of determining a magnetic field and electric potential from scattering measurements corresponding to finitely many plane waves. The main result shows that the coefficients are uniquely determined by $2n$ measurements up to a natural gauge. We also show that one can recover the full first order term for a related equation having no gauge invariance, and that it is possible to reduce the number of measurements if the coefficients have certain symmetries. This work extends the fixed angle scattering results of \cite{RakeshSalo1, RakeshSalo2} to Hamiltonians with first order perturbations, and it is based on wave equation methods and Carleman estimates.
\end{abstract}

\maketitle


\section{Introduction and Main Theorems}

In this work we study the inverse scattering problem of recovering a first order perturbation from fixed angle scattering measurements.
Let $\lambda > 0$,  $n\geq 2$ and let $\omega \in S^{n-1}$ be a fixed unit vector. Suppose that for $\M \in  \N$, $\mathcal V(x,D)$ is  a first order differential operator with $C^\M(\R^n)$ coefficients having  compact support  in $B = \{x\in \R^n:|x|<1\}$, the open ball of radius $1$.  We consider a Hamiltonian  $H_{\mathcal V} = -\Delta + {\mathcal V}(x,D)$ in $\R^n$ and the problem
\begin{equation} \label{eq:sta_sct}
\begin{cases}
(H_{\mathcal V} -\lambda^2) \psi_{\mathcal V} = 0    \\
\psi_{\mathcal V}(x,\lambda,\omega) = e^{i\lambda \omega \cdot x} + \psi^s_{\mathcal V}(x,\lambda,\omega),
\end{cases}
\end{equation}
where $\psi^s_{\mathcal V}(x,\lambda,\omega)$ is known as a the {\it scattering solution}.
It is well known that in order to have uniqueness for this problem one needs to put further restrictions on the function $\psi_{\mathcal V}^s$. See e.g.\ \cite{Yafaev} for the following facts. The   {\it outgoing Sommerfeld Radiation Condition} (SRC for short)
\[
\partial_r \psi_{\mathcal V}^s -i \lambda \psi_{\mathcal V}^s  = o(r^{-(n-1)/2}) \quad \text{as } r \to \infty,
\]
where $r = |x|$, selects the  solutions that heuristically behave as  Fourier transforms in time of spherical waves that propagate towards infinity.  A function  $\psi^s_{\ma V}$ satisfying \eqref{eq:sta_sct} and the SRC is called an   outgoing scattering solution.
This solution is given by the so called outgoing resolvent operator
\begin{equation} \label{eq:res}
R_{\mathcal V}(\lambda) : = (H_{ \mathcal V} - (\lambda + i0)^2)^{-1} ,
\end{equation}
so that, formally,
\[
\psi_{\mathcal V}^s = R_{\mathcal V}(\lambda)(- \mathcal V(x,D)e^{i\lambda \omega \cdot x}).
\]
Notice that, assuming that such a solution $\psi^s_{\mathcal V}$ exists---which would happen if the resolvent is well defined and  bounded in appropriate spaces---it must satisfy the Helmholtz equation
\[(-\Delta -\lambda^2) \psi^s_{\mathcal V} = 0 \quad \text{in} \; \; \R^n \setminus \ol{B}, \]
  since the coefficients of ${\mathcal V}(x,D)$ are compactly supported in ${B}$. It is well known  that a solution of Helmholtz equation satisfying the $SRC$ has always the asymptotic expansion
\begin{equation} \label{eq:asymp_psi}
\psi^s _{{\mathcal V}}(x,\lambda,\omega) = e^{i\lambda |x|} |x|^{-\frac{n-1}{2}} a_{\mathcal V}(\lambda,\theta,\omega) + o(|x|^{-\frac{n-1}{2}}), \quad \text{as} \;\; |x|\to \infty,
\end{equation}
where  $\theta = \frac{x}{|x|}$ and $ a_{\mathcal V}(\lambda,\theta,\omega)$ is called the {\it scattering amplitude} or {\it far field pattern}.  

In this setting, the main objective of an inverse scattering problem  consists in reconstructing the coefficients of $\mathcal V(x,D)$ from partial or full knowledge of $a_{\mathcal V}(\lambda,\theta,\omega)$. Depending on the data that is assumed to be known we can distinguish several types of inverse scattering problems:
\begin{itemize}
\item[1.] {\it Full data}. Recover the coefficients of $\mathcal V(x,D)$ from the knowledge of $a_{\mathcal V}(\lambda,\theta,\omega)$ for all $(\lambda, \theta, \omega)\in (0,\infty) \times S^{n-1}\times S^{n-1}$. 
\item[2.] {\it Fixed frequency} (or fixed energy). Recover the coefficients of $\mathcal V(x,D)$ from the knowledge of $a_{\mathcal V}(\lambda_0,\theta,\omega)$ for a fixed $\lambda_0>0$ and all $(\theta, \omega)\in S^{n-1}\times S^{n-1}$. 
\item[3.] {\it Backscattering}. Recover the coefficients of $\mathcal V(x,D)$ from the knowledge of $a_{\mathcal V}(\lambda,\omega,-\omega)$ for all $(\lambda, \omega)\in (0,\infty)  \times S^{n-1}$. 
\item[4.] {\it Fixed angle} (single measurement). Recover the coefficients of $\mathcal V(x,D)$ from the knowledge of $a_{\mathcal V}(\lambda,\theta,\omega_0)$ for a fixed $\omega_0\in S^{n-1}$ and all $(\lambda, \theta)\in (0,\infty)  \times S^{n-1}$.
\end{itemize}
In the case of fixed angle scattering it  is also interesting to consider analogous inverse problems in which $a_{\mathcal V}(\, \cdot \,, \, \cdot \,, \omega)$ is assumed to be known for each $\omega$ in a fixed  subset (usually finite) of $S^{n-1}$. 

Let $D = -i\nabla $. We consider the Hamiltonian
\[ H_{\mathcal V} = H_{\A,q} =(D+ \A)^2 +q= -\Delta + 2\A \cdot D+ D\cdot \A+ \A^2+ q ,\]
 where  both  the magnetic potential $\A$, and the electrostatic potential $q$  are real. Then $H_{\ma V}$ is self-adjoint, and if $\A$ and $q$ are compactly supported, the resolvent \eqref{eq:res} is bounded in appropriate spaces under very general assumptions on the regularity of $\A $ and $q$. This implies   that the problem \eqref{eq:sta_sct} has a unique solution $\psi^s_{\mathcal V}$ and hence that the scattering amplitude $a_{\mathcal V} =a_{\A,q}$ is well defined, so that the fixed angle scattering  problem can be appropriately stated.
 
 In \cite{RakeshSalo1, RakeshSalo2} it has been proved that for $H_\ma V = -\Delta +q$, knowledge of the fixed angle scattering data $a_{\mathcal V}(\, \cdot \, , \, \cdot \, , \omega)$ in two opposite directions $\omega = \pm \omega_0$ for a fixed $ \omega_0 \in  S^{n-1}$, determines uniquely  the potential $q$. The present article extends the results of \cite{RakeshSalo1, RakeshSalo2} to the case of non-vanishing first order coefficients and proves that from $2n$ measurements, or just $n+1$ measurements under symmetry conditions, one can determine both the first and zeroth order coefficients up to natural gauges. To prove these results, we follow the approach used in \cite{RakeshSalo2}, that is we show the equivalence of the fixed angle scattering problem with an appropriate inverse problem for the wave equation. This inverse scattering problem in time domain   consists in  recovering information on $\A$ and $q$ from boundary   measurements of the solution $U_{\A,q}$ of the initial value problem
\begin{equation} \label{eq:direct}
(\p_t^2 + H_{\A,q})U_{\A,q} =0 \; \; \text{in}\; \R^{n+1}, \qquad U_{\A,q}|_{\left\{t<-1\right\}} = \delta (t- x \cdot \omega).
\end{equation}
If the support of $\A$ and $q$ is contained in $B$, the boundary measurements of $U_{\A,q}$ are made in the set $ \partial B \times (-T,T) \cap \{(x,t)  : t \ge x\cdot \omega\}$,
where  $\partial B$ denotes the boundary of the ball.

We  now describe some previous results on the   inverse scattering problem of recovering a potential $q(x)$ from fixed angle measurements. As discussed above, this problem can be considered in the frequency domain, as the problem of determining $q$ from the scattering amplitude $a_q(\, \cdot \,, \, \cdot \,, \omega)$ for the Schr\"odinger operator $-\Delta + q$ with a fixed direction $\omega \in S^{n-1}$, or alternatively in the time domain as the problem of recovering $q$ from boundary or scattering measurements of the solution $U_q$ of the wave equation. The equivalence of these problems is discussed in  \cite{RakeshSalo2} (see \cite{Melrose,  Uhlmann_backscattering,  MelroseUhlmann_bookdraft} for the odd dimensional case).

The one-dimensional case is quite classical, see \cite{Marchenko, DeiftTrubowitz}.
In dimensions $n \geq 2$ uniqueness has been proved for small or generic potentials \cite{Stefanov, BarceloEtAl}, recovery of  singularities results are given in \cite{Ruiz,Meronno}, and uniqueness of the zero potential is considered in \cite{BaylissLiMorawetz}. Recently in \cite{RakeshSalo1, RakeshSalo2} it was proved that measurements for two opposite fixed angles uniquely determine a potential $q \in C^{\infty}_c(\R^n)$. The problem with one measurement remains open, but \cite{RakeshSalo1, RakeshSalo2} prove uniqueness for symmetric or horizontally controlled potentials (similar to angularly controlled potentials in backscattering \cite{RakeshUhlmann}), and Lipschitz stability estimates are given for the wave equation version of the problem. We also mention the recent work \cite{MaSalo} which studies the fixed angle problem when the Euclidean metric is replaced by a Riemannian metric, or sound speed, satisfying certain conditions, and the upcoming work \cite{KrishnanRakeshSemanapati} which studies fixed angle scattering for time-dependent coefficients also in the case of first order perturbations.

We now introduce the main results in this work. Since the metric is Euclidean, the vector potential $\A$  can equivalently be seen as a $1$-form $\A= A^j dx_j$. We denote by $d\A$ the exterior derivative of $\A$. 
Our first result shows that the magnetic field $d\A$ and the electrostatic potential $q$ are uniquely determined by the knowledge of the fixed angle scattering amplitude $a_{\A,q}(\, \cdot \,, \, \cdot \,, \omega)$ for  $n$ orthogonal directions $\omega = e_j$, $1\le j \le n$  and the  $n$ opposite ones, $\omega = -e_j$.  From now on, in this paper we will fix $\M$ to be the integer
\begin{equation} \label{eq:m}
 \M = \frac{3}{2} n +10 \; \; \; \text{if } n \text{ is even}, \quad \M =\frac{3}{2}(n+1) +10  \; \;\;\text{if } n \text{ is odd}.
\end{equation}
In general we consider $\A \in C^{m+2}(\R^n; \R^n)$ and $q \in C^m(\R^n; \R)$. This is required in order to guarantee that the solutions of \eqref{eq:direct} satisfy certain  regularity properties.
\begin{Theorem} \sl  \label{thm:main_2n_stationary}
Let $n\ge 2$ and $\lambda_0>0$, and let $e_1,\dots e_n$ be any orthonormal basis in $\R^n$. Assume  that the pairs of potentials $\A_1,\A_2 \in C^{\M+2}_c(\R^n; \R^n)$  and $q_1,q_2  \in  C^{\M}_c(\R^n; \R)$ are compactly supported in $B$. Assume also that   the following condition holds:
 \begin{equation}  \label{eq:A_condition}
\int_{-\infty}^\infty  e_n \cdot \A_k(x_1,\dots,x_{n-1},s) \, ds = 0 \quad \text{for} \;  k=1,2 \; \,  \text{and for every} \;(x_1,\dots,x_{n-1}) \in \R^{n-1}.
 \end{equation}
If for all $\theta \in S^{n-1}$ and $\lambda \ge \la_0$ we have
\[a_{ \A_1,q_1} (\lambda,\theta, \pm e_j) = a_{ \A_2,q_2}(\lambda,\theta,\pm e_j)  \quad \text{for all } j =1,\dots n, \]  
 then  $d \A_1 = d \A_2$ and $q_1 = q_2$.
\end{Theorem}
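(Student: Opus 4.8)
The plan is to follow the scheme of \cite{RakeshSalo2} and reduce the fixed angle scattering hypothesis to a statement about the wave equation \eqref{eq:direct}, which is then attacked by an integral identity together with a Carleman estimate. By the equivalence between the fixed angle scattering amplitude and the lateral boundary data of $U_{\A,q}$ that the paper establishes, the hypothesis $a_{\A_1,q_1}(\lambda,\theta,\pm e_j) = a_{\A_2,q_2}(\lambda,\theta,\pm e_j)$ for all $\lambda\ge\lambda_0$, $\theta\in S^{n-1}$, means that for each $\omega\in\{\pm e_1,\dots,\pm e_n\}$ the solutions $U_k^\omega := U_{\A_k,q_k}$ of \eqref{eq:direct} with that $\omega$ have the same Cauchy data on $\partial B\times(-T,T)\cap\{t\ge x\cdot\omega\}$, for $T$ large. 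Finite speed of propagation gives $U_1^\omega=U_2^\omega$ outside $B$, and together with the vanishing Cauchy data this localizes everything to the interior of $B$.

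Next I would set up the gauge normalization. Under the regularity assumption \eqref{eq:m}, each $U_k^\omega$ has a progressing wave expansion $U_k^\omega = \sum_{0\le j\le N}\alpha_{k,j}^\omega(x)\,f_j(t-x\cdot\omega) + (\text{smoother})$, with $f_0=\delta$, $f_j'=f_{j-1}$, whose leading coefficient solves the transport equation $(\omega\cdot\nabla+i\,\omega\cdot\A_k)\alpha_{k,0}^\omega=0$ with $\alpha_{k,0}^\omega\to 1$ as $x\cdot\omega\to-\infty$. Taking $\Psi_k(x) = \int_{-\infty}^{x\cdot e_n} e_n\cdot\A_k\big(x-(x\cdot e_n-\tau)e_n\big)\,d\tau$, hypothesis \eqref{eq:A_condition} makes $\Psi_k$ compactly supported in $B$, so replacing $(\A_k,q_k)$ by the gauge-equivalent pair $(\A_k-\nabla\Psi_k,\,q_k)$ leaves $d\A_k$, $q_k$, the scattering amplitudes, and all the relevant Cauchy data unchanged. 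We may thus assume $e_n\cdot\A_k\equiv 0$; this is the normalization to which the Carleman weight is adapted, and it is the only place \eqref{eq:A_condition} is used, which explains the distinguished role of $e_n$.

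Writing $\mathbf{B}=\A_1-\A_2$ and $Q=q_1-q_2+\mathbf{B}\cdot(\A_1+\A_2)-i\,\nabla\cdot\mathbf{B}$, the difference $V^\omega=U_1^\omega-U_2^\omega$ solves
\[
(\partial_t^2+H_{\A_1,q_1})V^\omega = -2\,\mathbf{B}\cdot D\,U_2^\omega - Q\,U_2^\omega \quad\text{in } B\times(-T,T),
\]
vanishes for $t<-1$, and has vanishing Cauchy data on $\partial B\times(-T,T)\cap\{t\ge x\cdot\omega\}$. Combining this with the solution for the opposite direction $-\omega$ and integrating by parts (the boundary terms drop because the Cauchy data match), one obtains an integral identity in which $\mathbf{B}$ enters, after splitting off a divergence that is absorbed into $Q$, only through the antisymmetric combination $\mathbf{B}\cdot(\overline{w}\,\nabla U_2^\omega - U_2^\omega\,\overline{\nabla w})$, i.e.\ through the gauge-invariant $d\mathbf{B}$. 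Making this quantitative with the Carleman estimate for $\partial_t^2-\Delta$ on a suitable spacetime domain and a weight adapted to $e_n$ (as in \cite{RakeshSalo2}, with $2\A_1\cdot D$, $-i\nabla\cdot\A_1$, $\A_1^2$, $q_1$ treated as lower order), one shows the remainders from the progressing wave expansion are negligible and the leading parts of the identity must vanish: for $\omega=\pm e_n$ this gives $q_1=q_2$ and the $e_n$-part of $d\mathbf{B}=0$ (recovering the potential result of \cite{RakeshSalo2} when $\A_1=\A_2=0$), and running the same argument for $\omega=\pm e_j$, $j\le n-1$ — now without the compactly supported integrating factor, since \eqref{eq:A_condition} is only assumed for $e_n$, and using the forward scattering part of the data to control the $e_j$-X-ray transform of $e_j\cdot\mathbf{B}$ — recovers the remaining components. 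Altogether $d\mathbf{B}=0$ and $Q=0$, i.e.\ $d\A_1=d\A_2$ and $q_1=q_2$.

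The main obstacle is the first order term $-2\,\mathbf{B}\cdot D\,U_2^\omega$ in the difference equation: unlike the potential case it is not controlled by the Carleman estimate directly, so the integration by parts and the use of both opposite directions must be arranged so that only the gauge-invariant $d\mathbf{B}$ survives, and all integrating factors must be kept compactly supported, which is exactly what forces \eqref{eq:A_condition}. Comparable care is needed in handling the remainder terms of the progressing wave expansion (which dictates the precise regularity \eqref{eq:m}) and in verifying that the $2n$ identities so produced determine $d\mathbf{B}$ pointwise rather than merely its axis-parallel X-ray transforms, which by themselves do not suffice.
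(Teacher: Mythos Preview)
Your outline gets the reduction to the wave equation and the gauge normalization right, but the core mechanism you describe diverges from the paper and has a real gap.

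You propose an integration-by-parts identity in which $\mathbf{B}=\A_1-\A_2$ survives only through the antisymmetric combination, i.e.\ through $d\mathbf{B}$. That is not how the argument runs here, and it is not clear such an identity can be made to close: the right-hand side $-2\mathbf{B}\cdot D\,U_2^\omega$ is a genuine first-order term that the Carleman estimate does \emph{not} absorb, and no amount of symmetrizing against the $-\omega$ solution turns it into a zeroth-order expression in $d\mathbf{B}$ pointwise. The paper instead recovers $e_j\cdot\mathbf{B}$ \emph{componentwise} from the transport equation on the characteristic surface $\Gamma=\{t=x\cdot\omega\}$: the trace of the $H$-wave on $\Gamma$ is $e^{\psi_k}$ with $\psi_k$ the line integral of $-i\,\omega\cdot\A_k$, so $(\partial_t+\partial_z+i\,e_j\cdot\A_1)(u_{1,j}-u_{2,j})$ on $\Gamma$ is a nonvanishing multiple of $e_j\cdot\mathbf{B}$. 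Gauge invariance is handled entirely by the upfront normalization $e_n\cdot\A_k=0$, not inside the identity.

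Two further ingredients you are missing are essential. First, the decoupling of $\A$ from $q$ is achieved by playing the $H$-wave \eqref{eq:direct_2} against the $\delta$-wave \eqref{eq:direct}: the $H$-wave's trace on $\Gamma$ sees only $\A$, while after the gauge normalization the $\delta$-wave for $\omega=\pm e_n$ has trace on $\Gamma$ depending only on $V_k=\A_k^2+D\cdot\A_k+q_k$. Second, the Carleman estimate is applied separately in $Q_+$ (built from the $+e_j$ wave) and $Q_-$ (built from the $-e_j$ wave, multiplied by a phase $e^{\mu_j}$ chosen precisely so that $w_+=w_-$ on $\Gamma$); the problematic $\Gamma$-boundary terms then cancel. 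This yields two \emph{coupled} inequalities with a factor $\gamma(\sigma)\to 0$:
\[
\|e^{\sigma\phi_0}\mathbf{B}\|_{L^2(B)}^2 \lesssim \gamma(\sigma)\,\|e^{\sigma\phi_0}(V_1-V_2)\|_{L^2(B)}^2,
\qquad
\|e^{\sigma\phi_0}(V_1-V_2)\|_{L^2(B)}^2 \lesssim \gamma(\sigma)\,\|e^{\sigma\phi_0}\mathbf{B}\|_{L^2(B)}^2,
\]
the first from $\omega=\pm e_j$, $j\le n-1$, the second from $\omega=\pm e_n$. Substituting one into the other and sending $\sigma\to\infty$ forces $\mathbf{B}=0$ and $V_1=V_2$ in the chosen gauge, hence $d\A_1=d\A_2$ and $q_1=q_2$. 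Your sketch does not isolate this tandem structure, and the ``X-ray transform'' remark does not substitute for it: what is recovered is $e_j\cdot\mathbf{B}$ pointwise on $B$, not a ray transform that would still need inverting.
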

\noindent The condition \eqref{eq:A_condition} is a technical restriction necessary to decouple the information on $q$ from the information on $\A$ at some point in the proof of this uniqueness result.

In Theorem \ref{thm:main_2n_stationary} one cannot recover completely the magnetic potential $\A$ due to the phenomenon of gauge invariance. This consists simply in the observation that if $H_{\A,q} u = v$ for some functions $u$ and $v$, then $H_{\A + \nabla f,q} \, (e^{ -if} u) = e^{-if}v$, for any $f\in C^2(\R^n)$. Therefore if $f$ is compactly supported, the scattering amplitude  is not going to be affected by $f$, so that $a_{\A,q} = a_{\A + \nabla f,q}$. On the other hand if we consider Hamiltonians 
\[ \tilde{H}_{\A,V} := -\Delta -2i \A \cdot D + V,\]
 where $V$ is a fixed function, then the gauge invariance is broken and knowledge of the associated scattering amplitude $ \tilde{a}_{\A,V}(\, \cdot \, , \,\cdot  \,,\omega)$ for the $2n$   directions $\omega = \pm e_j$ determines completely $\A$.

\begin{Theorem} \sl    \label{thm:main_drif2n_stationary}
Let $n\ge 2$ and $\lambda_0>0$, and let $e_1,\dots e_n$ be any orthonormal basis in $\R^n$. Assume that $\A_1,\A_2 \in C^{\M+2}_c(\R^n; \R^n)$ and  $V \in C^{\M}_c(\R^n; \C)$ have compact  support in $B$, and that the Hamiltonians 
$\tilde{H}_{\A_1,V} $  and $\tilde{H}_{\A_2,V} $ are both self-adjoint operators.  \\ 
If for all $\theta \in S^{n-1}$ and $\lambda \ge \la_0$ we have
\[\tilde{a}_{ \A_1,V} (\lambda,\theta, \pm e_j) = \tilde{a}_{ \A_2,V}(\lambda,\theta,\pm e_j) \quad \text{for all } j =1,\dots n, \]  
 then  $\A_1 =  \A_2$.
\end{Theorem}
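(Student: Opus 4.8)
The plan is to reduce the fixed angle scattering problem for the Hamiltonian $\tilde{H}_{\A,V}$ to an inverse boundary value problem for the corresponding wave equation, exactly along the lines of Theorem \ref{thm:main_2n_stationary} and \cite{RakeshSalo2}. First I would invoke the equivalence between knowledge of the scattering amplitude $\tilde{a}_{\A,V}(\cdot,\cdot,\omega)$ and knowledge of the lateral boundary data of the solution $\tilde{U}_{\A,V}$ of the time-domain problem $(\p_t^2 + \tilde{H}_{\A,V})\tilde{U}_{\A,V} = 0$ in $\R^{n+1}$ with $\tilde{U}_{\A,V}|_{\{t<-1\}} = \delta(t - x\cdot\omega)$. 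The hypothesis then says that for each of the $2n$ directions $\omega = \pm e_j$ the solutions $\tilde{U}_{\A_1,V}(\cdot,\cdot,\omega)$ and $\tilde{U}_{\A_2,V}(\cdot,\cdot,\omega)$ agree on $\partial B\times(-T,T)\cap\{t\ge x\cdot\omega\}$; since outside $\overline{B}$ both solve the free wave equation, their difference $w$ in fact has vanishing Cauchy data there and vanishes for $t<-1$.

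Next, fix such a direction $\omega$ and study $w = \tilde{U}_{\A_1,V}(\cdot,\cdot,\omega) - \tilde{U}_{\A_2,V}(\cdot,\cdot,\omega)$, which solves
\[
(\p_t^2 + \tilde{H}_{\A_1,V})w = 2i(\A_1-\A_2)\cdot D\,\tilde{U}_{\A_2,V}(\cdot,\cdot,\omega).
\]
Because the perturbation is first order, the right-hand side still carries a $\delta(t-x\cdot\omega)$-singularity, so I would first peel off the progressing wave part of $\tilde{U}_{\A,V}$, whose leading coefficient $a_\omega$ solves the transport equation $\omega\cdot\nabla a_\omega + (\omega\cdot\A)a_\omega = 0$ and is therefore the exponential of a line integral of $\omega\cdot\A$; the remainder is as regular as \eqref{eq:m} allows. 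Matching leading coefficients already gives that the longitudinal ray transform of $\A_1-\A_2$ along every line parallel to some $e_j$ vanishes, while the equation for the regular remainder becomes a wave equation with a controlled source to which the machinery used in Theorem \ref{thm:main_2n_stationary} applies.

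I would then run the Carleman estimate argument of Theorem \ref{thm:main_2n_stationary}: using the self-adjointness of $\tilde{H}_{\A_1,V}$ and $\tilde{H}_{\A_2,V}$, the gluing of the $\omega$- and $(-\omega)$-problems as in \cite{RakeshSalo2}, and a Carleman weight adapted to this geometry, one arrives at an integral identity controlling $\A_1-\A_2$ on $B$. The decisive difference from Theorem \ref{thm:main_2n_stationary} is structural: here the two Hamiltonians differ only by the purely first order operator $-2i(\A_1-\A_2)\cdot D$, the zeroth order coefficient $V$ being common, and, unlike $H_{\A,q}$, the operator $\tilde{H}_{\A,V}$ has no gauge symmetry $\A\mapsto\A+\nabla f$ fixing the remaining coefficients. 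Hence the identity need not be gauge invariant and sees the whole one-form $\A_1-\A_2$ rather than only $d(\A_1-\A_2)$; in particular no decoupling condition of the type \eqref{eq:A_condition} is required. Combining this identity with the vanishing of the ray transforms from the progressing wave step yields $\A_1=\A_2$.

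The hard part will be the Carleman estimate in the presence of the first order term: one has to absorb a contribution one order higher than in the purely electric setting of \cite{RakeshSalo2}, which is precisely what forces the loss of regularity recorded in \eqref{eq:m}, and one must verify carefully that fixing $V$ genuinely upgrades the conclusion from ``$d(\A_1-\A_2)=0$'' to ``$\A_1=\A_2$'' — that is, that the known zeroth order term together with the Carleman identity really does pin down the gradient part of $\A_1-\A_2$ as well.
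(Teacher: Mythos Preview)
Your proposal is correct and follows essentially the same route as the paper, which deduces Theorem~\ref{thm:main_drif2n_stationary} from the time-domain uniqueness result Theorem~\ref{thm:2n_drift_nogauge} (with $\W_k=-i\A_k$) together with the frequency/time equivalence Theorem~\ref{thm:equivalence}, the latter applied after rewriting $\tilde{H}_{\A_k,V}=(D+\A_k)^2+q_k$ with $q_k=V-\A_k^2-D\cdot\A_k$ real by the self-adjointness hypothesis. The only imprecision worth flagging is the role of the ray-transform equality you extract from the leading progressing-wave coefficients: in the paper it is not combined with the Carleman output as an independent piece of information, but is used to define the factor $e^{\mu_j}$ that makes the glued functions satisfy $w_+=w_-$ on $\Gamma$, after which the Carleman estimate alone (Lemma~\ref{lemma:carl_2} with $q_\pm=0$) already yields the pointwise vanishing of $\A_1-\A_2$.
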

\noindent Notice that in this statement \eqref{eq:A_condition} is not assumed. This is related to the fact that $V$ is  fixed, so it is not necessary to decouple $V$ from $\A$ in the proof.  Similarly, if $\A$ is a fixed vector potential, it would be possible to determine $q$ from the knowledge of $a_{\A,q}(\,\cdot\,,\,\cdot\,,\pm \omega)$ for a fixed $\omega \in S^{n-1}$.

In both the previous theorems we need to measure the scattering amplitude generated by a wave incoming from  $2n$ different directions. In some cases we can avoid the need of sending a wave also from the opposite direction provided we assume there are certain symmetries in the potentials. As an example of this phenomenon we state the following result.
\begin{Theorem} \sl   \label{thm:main_n_stationary}
Let $n\ge 2$ and $\lambda_0>0$, and let $e_1,\dots e_n$ be any orthonormal basis in $\R^n$. Assume  that the pairs of potentials $\A_1,\A_2 \in C^{\M+2}_c(\R^n; \R^n)$  and $q_1,q_2  \in  C^{\M}_c(\R^n; \R)$ are compactly supported in $B$. Assume also that   
 \begin{equation} \label{eq:antysimm} 
\A_k(-x) = -\A_k(x) , \quad \text{for} \quad k=1,2. 
 \end{equation}
If for all $\theta \in S^{n-1}$ and $\lambda \ge \la_0$ we have
\[
\begin{aligned}
 a_{ \A_1,q_1} (\lambda,\theta,   e_j) &= a_{ \A_2,q_2}(\lambda,\theta, e_j)  \quad \text{for all } j =1,\dots n-1,  \text{ and}   \\  
a_{ \A_1,q_1} (\lambda,\theta, \pm e_n) &= a_{ \A_2,q_2}(\lambda,\theta,\pm e_n),
 \end{aligned}
 \]
then  $d \A_1 = d \A_2$ and $q_1 = q_2$.
\end{Theorem}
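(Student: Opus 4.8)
The plan is to reduce, as in the proof of Theorem~\ref{thm:main_2n_stationary}, the fixed angle scattering problem to the inverse problem for the wave equation \eqref{eq:direct}, and then to use the antisymmetry hypothesis \eqref{eq:antysimm} to manufacture the two ``missing'' measurements in the directions $-e_1,\dots,-e_{n-1}$ by a spatial reflection. First I would invoke the equivalence, established earlier in the paper following \cite{RakeshSalo2}, between knowledge of $a_{\A,q}(\,\cdot\,,\,\cdot\,,\omega)$ for $\lambda\ge\lambda_0$ and $\theta\in S^{n-1}$ and knowledge of the boundary measurements of the solution $U_{\A,q}$ of \eqref{eq:direct} on $\partial B\times(-T,T)\cap\{t\ge x\cdot\omega\}$. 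Under this correspondence the hypothesis of the theorem says that these boundary measurements for $(\A_1,q_1)$ and $(\A_2,q_2)$ coincide when $\omega=e_1,\dots,e_{n-1}$ and when $\omega=\pm e_n$.

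For the reflection step, let $R$ denote the map $x\mapsto -x$. A direct computation in \eqref{eq:direct} shows that $V(x,t):=U_{\A,q,\omega}(-x,t)$ solves $(\partial_t^2+H_{\A',q'})V=0$ with $V|_{\{t<-1\}}=\delta(t+x\cdot\omega)$, where $q'=q\circ R$ and $\A'(y)=-\A(-y)$: the first order part $2\A\cdot D$ is pulled back to $2\A'\cdot D$, while $-i\,\mathrm{div}\,\A$, $|\A|^2$ and $q$ are simply pulled back by $R$. By \eqref{eq:antysimm} we have $\A_k'=\A_k$, so that
\[
U_{\A_k,q_k,e_j}(-x,t)=U_{\A_k,\,q_k\circ R,\,-e_j}(x,t),\qquad j=1,\dots,n-1,\ k=1,2 .
\]
Since $\partial B$ is $R$--invariant and $R$ maps $\{t\ge x\cdot e_j\}$ onto $\{t\ge x\cdot(-e_j)\}$, the assumed equality of the $+e_j$ measurements for $(\A_1,q_1)$ and $(\A_2,q_2)$ is equivalent to the equality of the $-e_j$ measurements for the pair $(\A_1,q_1\circ R)$, $(\A_2,q_2\circ R)$. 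As $\A_k$ is unchanged and $q_k\circ R\in C^{\M}_c(\R^n;\R)$ is still supported in $B$, we are now in essentially the situation handled in the proof of Theorem~\ref{thm:main_2n_stationary}: boundary data for all $2n$ coordinate directions.

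From here I would repeat that proof. For a fixed incoming direction $\omega$ and the two coefficient sets, set $w=U_1-U_2$; then $w$ vanishes for $t<-1$, has vanishing lateral data on the measured part of $\partial B\times(-T,T)$, and solves a wave equation whose source is supported in $B$ and built from $\A_1-\A_2$, $d(\A_1-\A_2)$ and $q_1-q_2$ (for the reflected problems, from $\A_1-\A_2$, $d(\A_1-\A_2)$ and $(q_1-q_2)\circ R$). The Carleman estimates of the paper, adapted to the direction $\omega$, then control the relevant combination of these coefficients in a region $\Omega_\omega\subset B$ with $\Omega_\omega\cup\Omega_{-\omega}$ covering $B$. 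Applying this with $\omega=e_j$ controls $q_1-q_2$ and $d\A_1-d\A_2$ on $\Omega_{e_j}$; applying the reflected version (the $-e_j$ estimate for the reflected potentials) and pulling back by $R$ controls $q_1-q_2$ and $d\A_1-d\A_2$ on $R(\Omega_{-e_j})$. Here one uses that $\A_k$ antisymmetric makes $d\A_k$ \emph{even}, so $d\A_1-d\A_2$ is $R$--invariant and the two pieces of information concern the same object; for the potential they control $q_1-q_2$ on $\Omega_{e_j}$ and on $R(\Omega_{-e_j})$ respectively, and these sets together cover $B$. The honest pair $\pm e_n$ is kept because combining the $+e_n$ and $-e_n$ solutions cancels the magnetic transport factors attached to the singular fronts $t=\pm x\cdot e_n$, which is exactly what is needed at the step where \eqref{eq:A_condition} was invoked for Theorem~\ref{thm:main_2n_stationary}; hence that condition is not required here. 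One concludes $d\A_1=d\A_2$ and $q_1=q_2$ on $B$.

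I expect the main obstacle to be the bookkeeping forced by the fact that reflection replaces $q_k$ by $q_k\circ R$ rather than by $q_k$: one cannot literally say that the $\pm e_j$ data of the original pair agree, so the Carleman estimates and the accompanying integral identities must be arranged so that only $q_1-q_2$ and $d\A_1-d\A_2$ enter, and one must check that $R(\Omega_{-e_j})$ genuinely complements $\Omega_{e_j}$ inside $B$. Verifying this geometric complementarity, and confirming that the $\pm e_n$ combination really takes over the role of \eqref{eq:A_condition}, is where the actual work lies; the remainder is a transcription of the proof of Theorem~\ref{thm:main_2n_stationary}.
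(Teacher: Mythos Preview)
Your reflection idea is the right one, and your computation $U_{\A_k,q_k,\omega}(-x,t)=U_{\A_k,q_k\circ R,-\omega}(x,t)$ is correct.  What is wrong is your picture of how the Carleman machinery operates.  There are no ``regions $\Omega_\omega\subset B$'' controlled one at a time with $\Omega_\omega\cup\Omega_{-\omega}$ covering $B$.  In Lemma~\ref{lemma:w+w-} the estimate is over all of $Q_\pm$; the reason one needs both signs is that the Carleman estimate on $Q_+$ and the one on $Q_-$ each leave an \emph{indefinite} boundary term $\sigma\int_\Gamma e^{2\sigma\phi_0}F^j\nu_j\,dS$ on the characteristic surface $\Gamma=\{t=z\}$, and these terms cancel only if $w_+|_\Gamma=w_-|_\Gamma$.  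Your proposal to pair $+e_j$ data for $(\A_k,q_k)$ with $-e_j$ data for $(\A_k,q_k\circ R)$ does not produce such a pair $(w_+,w_-)$: the $u$-values on $\Gamma$ for the two problems are computed from different zero-order coefficients and will not match, so the boundary term survives and the argument collapses.  The ``geometric complementarity'' you worry about at the end is therefore not the issue; the issue is the identity $w_+|_\Gamma=w_-|_\Gamma$.

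The paper applies the reflection at a different stage.  For $1\le j\le n-1$ it uses only the $+e_j$ measurement, sets $w_+=u_{1,j}-u_{2,j}$ on $Q_+$, and \emph{defines} $w_-(y,z,t):=w_+(-y,-z,-t)$ on $Q_-$.  The antisymmetry $\A_k(-x)=-\A_k(x)$ then gives $w_+|_\Gamma=w_-|_\Gamma$ directly (this is the computation \eqref{eq:vanishing_cond_W_proof_2}), so Lemma~\ref{lemma:carl_2} applies with $\A_+=\A_1-\A_2$, $\A_-(x)=\A_+(-x)$, $q_+=V_1-V_2$, $q_-(x)=q_+(-x)$.  A change of variables $x\mapsto -x$ in the $\A_-,q_-$ terms converts the estimate into one involving the two weights $\phi_0(\cdot,\vartheta)$ and $\phi_0(\cdot,-\vartheta)$ simultaneously (equation \eqref{eq:last_mag_n}).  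For $j=n$ the genuine $\pm e_n$ data are used exactly as in Theorem~\ref{thm:2n_magnetic}, yielding \eqref{eq:magnetic_last} for each of the two weights; adding these and combining with \eqref{eq:last_mag_n} closes the loop.  Note also that \eqref{eq:A_condition} is not bypassed by any ``cancellation of transport factors'' for $\pm e_n$; rather, under antisymmetry one only needs a gauge with $e_n\cdot\A_1=e_n\cdot\A_2$, which is available without \eqref{eq:A_condition}.
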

\noindent We assume that $a_{ \A_1, q_1} (\lambda,\theta, \omega) = a_{ \A_2,q_2}(\lambda,\theta,\omega)$ for $\omega = \pm e_n$ instead of just $\omega = e_n $ since we have not considered any symmetry on the potential $q$ (a result assuming symmetries on $q$ to reduce further the data could also be proved modifying slightly the arguments used to prove this theorem). We also prove in time domain  a more technical  result analogous  to Theorem \ref{thm:main_drif2n_stationary} that requires just $n$ measurements instead of $2n$, and that is compatible with less restrictive symmetry conditions  than \eqref{eq:antysimm} (see Theorem  \ref{thm:n_drift_nogauge} below).

In all the previous theorems we have considered an orthonormal basis $\{e_1, e_2 ,\dots e_n\}$ of $\R^n$ in order to simplify the notation and computations in some parts of the arguments, but we remark that our proofs  can be easily adapted to allow also  non-orthonormal directions of measurements.

As already mentioned, the previous theorems follow from corresponding results for the time domain inverse problem (Theorems \ref{thm:2n_magnetic}, \ref{thm:2n_drift_nogauge}, and \ref{thm:n_magnetic}, respectively). We now state the precise result that establishes the equivalence between the inverse scattering problem in frequency domain and  the inverse scattering problem in time domain, extending the results in \cite{RakeshSalo2} to first order perturbations.

\begin{Theorem} \sl  \label{thm:equivalence}
Let $n\geq 2$,  $\omega\in S^{n-1}$, and $\lambda_0>0$. Assume that $\A_1,\A_2 \in C^{\M+2}_c(\R^n; \R^n)$ and $q_1,q_2  \in  C^{\M}_c(\R^n; \R)$ are supported in $B$. For $k=1,2$, let $U_{\A_k, q_k}(x,t;\omega)$ be the unique distributional solution of the initial value problem
   \begin{equation*}
( \partial_t^2 + (D +\A_k )^2 +  q_k )U_{\A_k, q_k} = 0  \; \, \text{in}\; \; \R^{n+1}, \quad U_{\A_k, q_k} |_{\left\{t<-1\right\}} = \delta (t- x \cdot \omega).
\end{equation*}
 Then one has that
\[
a_{\A_1, q_1}(\lambda, \theta, \omega)= a_{\A_2, q_2}(\lambda, \theta, \omega)\; \; \text{for}\; \;\lambda \geq \lambda_0 \; \;\text{and}\; \;\theta \in S^{n-1},
\]
if and only if
\[
U_{\A_1, q_1}(x,t;\omega)= U_{\A_2, q_2}(x,t ;\omega) \; \; \text{for all}\;\; (x,t)\in (\partial B \times \R) \cap \left\{ t \ge x\cdot \omega\right\}.
\]
\end{Theorem}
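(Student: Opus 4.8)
The strategy follows the potential-only case treated in \cite{RakeshSalo2}: both the scattering amplitude $a_{\A,q}(\,\cdot\,,\,\cdot\,,\omega)$ and the boundary trace of $U_{\A,q}$ encode the same stationary object, the outgoing scattering solution $\psi^s_{\A,q}(\,\cdot\,,\lambda,\omega)$, and the equivalence is obtained by transferring information through two classical correspondences and one analyticity argument. Write $U_{\A,q}(x,t;\omega)=\delta(t-x\cdot\omega)+U^s_{\A,q}(x,t;\omega)$. Since $\delta(t-x\cdot\omega)$ solves the free wave equation, $U^s_{\A,q}$ solves $(\p_t^2+H_{\A,q})U^s_{\A,q}=-\mathcal V(x,D)\delta(t-x\cdot\omega)$, whose right-hand side is supported in $\overline{B}\cap\{t=x\cdot\omega\}$, while $U^s_{\A,q}$ vanishes for $t<-1$; thus $U^s_{\A,q}$ is the retarded (causal) solution of this forced equation. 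By finite speed of propagation for $\p_t^2+H_{\A,q}$, $U^s_{\A,q}$ is supported in the forward light cone of $\overline{B}\cap\{t=x\cdot\omega\}$, which lies inside $\{t\ge x\cdot\omega\}$. Consequently, after subtracting the coefficient-independent term $\delta(t-x\cdot\omega)$ and using that both $U^s_{\A_k,q_k}$ vanish for $t<x\cdot\omega$, the time-domain hypothesis is equivalent to $U^s_{\A_1,q_1}=U^s_{\A_2,q_2}$ on all of $\partial B\times\R$.

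Next I would pass to the frequency domain via the partial Fourier transform in $t$. Using the regularity properties of the solution of \eqref{eq:direct} developed in the paper, $U^s_{\A,q}(x,\,\cdot\,;\omega)$ is a tempered distribution supported in $[x\cdot\omega,\infty)$, so $\widehat{U^s_{\A,q}}(x,\lambda;\omega):=\int_{\R}e^{i\lambda t}U^s_{\A,q}(x,t;\omega)\,dt$ is holomorphic for $\Im\lambda>0$ and solves $(H_{\A,q}-\lambda^2)\widehat{U^s_{\A,q}}=-\mathcal V(x,D)e^{i\lambda x\cdot\omega}$. Since $U^s_{\A,q}$ is causal, for $\Im\lambda>0$ one has $\widehat{U^s_{\A,q}}(\,\cdot\,,\lambda;\omega)=(H_{\A,q}-\lambda^2)^{-1}\big(-\mathcal V(x,D)e^{i\lambda x\cdot\omega}\big)$, and letting $\Im\lambda\to0^+$ the limiting absorption principle for $H_{\A,q}$ identifies the boundary value with the outgoing scattering solution: $\widehat{U^s_{\A,q}}(x,\lambda;\omega)=\psi^s_{\A,q}(x,\lambda,\omega)$ for $\lambda>0$. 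On the stationary side, $\psi^s_{\A,q}(\,\cdot\,,\lambda,\omega)$ is an outgoing Helmholtz solution in $\R^n\setminus\overline{B}$; by Rellich's lemma it is determined by, and determines, its far field pattern $a_{\A,q}(\lambda,\,\cdot\,,\omega)$, and by uniqueness for the exterior Dirichlet problem for Helmholtz with the SRC it is determined by, and determines, its Dirichlet trace on $\partial B$. Hence, for each fixed $\lambda>0$, knowledge of $a_{\A,q}(\lambda,\,\cdot\,,\omega)$ is equivalent to knowledge of $\widehat{U^s_{\A,q}}(\,\cdot\,,\lambda;\omega)|_{\partial B}$.

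The two implications now follow. If $U^s_{\A_1,q_1}=U^s_{\A_2,q_2}$ on $\partial B\times\R$, then their Fourier transforms in $t$ agree, so $\psi^s_{\A_1,q_1}(\,\cdot\,,\lambda,\omega)|_{\partial B}=\psi^s_{\A_2,q_2}(\,\cdot\,,\lambda,\omega)|_{\partial B}$ for all $\lambda>0$, whence $a_{\A_1,q_1}(\lambda,\,\cdot\,,\omega)=a_{\A_2,q_2}(\lambda,\,\cdot\,,\omega)$ for $\lambda\ge\lambda_0$. Conversely, suppose the latter identity holds for $\lambda\ge\lambda_0$. The correspondences above give $\widehat{U^s_{\A_1,q_1}}(x,\lambda;\omega)=\widehat{U^s_{\A_2,q_2}}(x,\lambda;\omega)$ only for $x\in\partial B$ and $\lambda\ge\lambda_0$. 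Fix $x\in\partial B$ and set $v(t)=U^s_{\A_1,q_1}(x,t;\omega)-U^s_{\A_2,q_2}(x,t;\omega)$, a tempered distribution supported in $[x\cdot\omega,\infty)$; then $\widehat v$ is holomorphic in $\Im\lambda>0$ and its distributional boundary value on $\R$ vanishes on the open interval $(\lambda_0,\infty)$. By the Schwarz reflection principle (in distributional form), $\widehat v$ extends holomorphically across $(\lambda_0,\infty)$ and vanishes on a nonempty open set, so $\widehat v\equiv0$ and hence $v\equiv0$. Thus $U^s_{\A_1,q_1}=U^s_{\A_2,q_2}$ on $\partial B\times\R$, and adding back the common term $\delta(t-x\cdot\omega)$ gives the time-domain conclusion.

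The main obstacle is the rigorous justification of the Fourier-transform step: one needs quantitative control on the temporal behaviour of $U^s_{\A,q}$ — that it is tempered in $t$, locally uniformly in $x\in\partial B$, with enough regularity for $\widehat{U^s_{\A,q}}$ to have the asserted holomorphy and boundary value as $\Im\lambda\to0^+$ — and one must handle the first-order term, which makes the source $-\mathcal V(x,D)\delta(t-x\cdot\omega)$ more singular than in \cite{RakeshSalo2}, since it now contains a $t$-derivative of $\delta$. These points rely on the mapping properties of \eqref{eq:direct} and on standard limiting-absorption and resolvent bounds for $H_{\A,q}$; the remaining ingredients (finite speed of propagation, Rellich's lemma, exterior Helmholtz uniqueness, and the Paley--Wiener/Schwarz-reflection argument) are classical.
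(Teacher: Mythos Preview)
Your outline is correct and follows the same overall strategy as the paper (Appendix~\ref{appendix:stationary}): identify the scattered wave $U^s_{\A,q}$ with $\psi^s_{\A,q}$ via Fourier transform in $t$ and the resolvent/limiting-absorption principle, invoke Rellich's lemma, and use analyticity in $\lambda$ to pass from $\lambda\ge\lambda_0$ to all frequencies. The paper handles two points somewhat differently. First, rather than asserting holomorphy of $\widehat{U^s}$ in all of $\{\Im\lambda>0\}$ (which would require temperedness of $U^s$ in $t$), it establishes the Fourier--resolvent correspondence only for $\Im\lambda>r_0$ (Proposition~\ref{prop:equivalence_data}) and then uses holomorphy of the resolvent in $\C_+\setminus i(0,r_0]$ together with continuity to $\overline{\C}_+\setminus i[0,r_0]$ (Proposition~\ref{prop:holomorph _contin_res}) for the analytic-continuation step; this sidesteps possible resolvent singularities on $i(0,r_0]$ and is the rigorous version of your Schwarz-reflection argument. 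Second, in the ``time $\Rightarrow$ frequency'' direction the paper does not stay on $\partial B$: it first upgrades $U^s_1=U^s_2$ on $\partial B$ to agreement on all of $\R^n\setminus\overline{B}$ using Lemma~\ref{lemma:normal_estimate} (to obtain $\partial_\nu$-agreement) together with Holmgren's theorem, and only then pairs with test functions supported outside $\overline{B}$; your alternative via exterior Helmholtz Dirichlet uniqueness is a valid shortcut that avoids this step.
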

\noindent  We remark that the restriction of the distribution $U_{\A_k,q_k}$ to the surface $\partial B \times \R$ is always well defined and vanishes in the open set $(\partial B \times \R) \cap \left\{ t <x\cdot \omega\right\}$. This can be seen from the explicit formula for $U_{\A_k,q_k}$ that we will compute in section \ref{sec:direct_scat_prob}.

The proof of the main results in time domain is based on a Carleman estimate method introduced in \cite{RakeshSalo1, RakeshSalo2}, which in turn adapts the method introduced in \cite{BukhgeimKlibanov} (see \cite{ImanuvilovYamamoto, Klibanov, BellassouedYamamoto} for more information and references on the Bukhgeim-Klibanov method).
 Essentially, the Carleman estimate is applied to the difference of two solutions of \eqref{eq:direct}. The general idea is to choose an appropriate Carleman weight function for the wave operator that is large on the surface $\{ t = x \cdot \omega \}$ and allows one to control a source term on the right hand side of the equation. Then one needs an additional energy estimate to absorb the error coming from the source term. This will allow one to control the difference of the potentials $\A_1-\A_2$ or $q_1-q_2$. This step is the key to get the uniqueness result. 
 Unfortunately, after doing all this, there is  a remaining  boundary term in the Carleman estimate that cannot be appropriately controlled.  However, this term can be canceled  using an equivalent Carleman estimate for solutions of the wave equation coming from the opposite direction. 
 This is why we require $2n$ measurements to recover $n$ independent functions instead of just $n$ measurements.  Assuming symmetry properties on the coefficients like in Theorem \ref{thm:main_n_stationary} is essentially an alternative  way to get around this difficulty. 

An interesting point in the proof of the time domain results is how one decouples the information concerning $\A$ from the information on $q$. The method used here consists in considering the solutions of the initial value problem
\begin{equation} \label{eq:direct_2}
(\p_t^2 + H_{\A,q}) U_{\A,q} =0 \; \; \text{in}\; \R^{n+1}, \qquad U_{\A,q}|_{\left\{t<-1\right\}} = H(t- x \cdot \omega),
\end{equation}
where $H$ stands for the Heaviside function. Since \eqref{eq:direct} is essentially the time derivative of the previous IVP, it turns out that it is equivalent to formulate the inverse scattering problem in time domain using any of these initial value problems.
The advantage is that the solutions of \eqref{eq:direct_2} contain information only about $\A$ at the surface $\{t=x \cdot \omega\}$. By using these ideas, we are able to estimate both $\A_1-\A_2$ in terms of $q_1-q_2$ and $q_1-q_2$ in terms of $\A_1-\A_2$. Using these two estimates in tandem allows us to recover both the magnetic field and electric potential under the assumption \eqref{eq:A_condition}.

This paper is structured as follows. In Section \ref{sec:direct_scat_prob} we state the time domain results, Theorem \ref{thm:2n_magnetic} and Theorem \ref{thm:2n_drift_nogauge}, from which Theorems \ref{thm:main_2n_stationary} and  \ref{thm:main_drif2n_stationary} follow by Theorem \ref{thm:equivalence}. We also analyze the structure of the solutions of the initial value problems \eqref{eq:direct} and  \eqref{eq:direct_2}   and we state several of their  properties that will  play an essential role later on. In Section \ref{sec:Carleman} we introduce the Carleman estimate and in Section \ref{sec:2n} we combine the results of the previous two sections to prove Theorems  \ref{thm:2n_magnetic}  and \ref{thm:2n_drift_nogauge}. In the last section of the paper we  state and prove Theorems \ref{thm:n_drift_nogauge} and \ref{thm:n_magnetic} in order to illustrate how  the number of measurements can be reduced in time domain by imposing symmetry assumptions on the potentials (Theorem \ref{thm:main_n_stationary} follows from the second result).  The proof of Theorem \ref{thm:equivalence} is given in  Appendix \ref{appendix:stationary},   and Appendix \ref{appendix:wave_eq} is devoted to adapting  several known results for the wave operator to our purposes.

\subsection*{Acknowledgements}

C.M.\ was supported by project MTM2017-85934-C3-3-P. L.P.\ and M.S.\ were supported by the Academy of Finland (Finnish Centre of Excellence in Inverse Modelling and Imaging, grant numbers 312121 and 309963), and M.S.\ was also supported by the European Research Council under Horizon 2020 (ERC CoG 770924).

 \section{The inverse problem in time domain} \label{sec:direct_scat_prob}
 
\subsection*{Main results in time domain}
 Let $\omega \in S^{n-1}$ be fixed. In the time domain setting we consider the initial value problem
 \begin{equation} \label{eq:wave_V}
( \partial_t^2 + H_{\mathcal V})U_{\mathcal V}=0  \; \; \text{in}\; \R^{n+1}, \qquad U_{\mathcal V}|_{\left\{t<-1\right\}} = \delta (t- x \cdot \omega),
\end{equation}
where $\delta$ represents the $1$-dimensional delta distribution and $H_{\mathcal V} = -\Delta + {\mathcal V}(x,D)$ . Formally, the problem \eqref{eq:sta_sct} is the Fourier transform in the time variable of \eqref{eq:wave_V}. As we will show later in this section, there is a unique distributional  solution of $U_{\mathcal V}$  if   the first order coefficients of $\mathcal V$ are in  $C^{\M+2}_{c}(\R^n)$ and the zero order coefficient is  $C^{\M}_{c}(\R^n)$, for $\M$ as in \eqref{eq:m}. 

The inverse   problem in the time domain   consists in determining the coefficients of $\mathcal{V}$ from certain measurements of $U_{\mathcal V}$ at the boundary $\p B \times (-T, T)\subset \R^{n+1}$ for some fixed $T>0$. To simply the notation  we define  
\[ \Sigma: = \partial B \times (-T,T)  .\]
 From now, depending on the context,  it will be useful to write the Hamiltonian $H_\ma V$ both in the forms
\begin{equation} \label{eq:hamilt_Aq}
  H_{\A,q} = (D + \A)^2 +  q   ,
\end{equation}
and  
\begin{equation} \label{eq:hamilt_WV}
  L_{\W,V} =   -\Delta + 2\W \cdot \nabla +  V   ,
\end{equation}
where $\A,\W \in C^{\M+2}_c(\R^n; \C^n)$ and   $q,V\in  C^{\M}_c(\R^n ;\C)$. 
Since the coefficients have high regularity and are complex valued, both forms are completely equivalent, but the first notation is specially convenient in the cases where there is gauge invariance. 
In fact, this inverse problem has an invariance equivalent to the gauge invariance present in the frequency domain problem. A straightforward computation shows that  if $U$ is a solution of
\[
(\p_t^2 + H_{\A,q})U=0 \; \; \text{in}\; \R^{n+1}, \qquad U|_{\left\{t<-1\right\}} = \delta (t- x \cdot \omega),
\]
then $\widetilde{U}= e^{-f} U$ is a solution of
\[
(\p_t^2 + H_{\A+\nabla f, q})\widetilde{U}=0 \; \; \text{in}\; \R^{n+1}, \qquad \widetilde{U}|_{\left\{t<-1\right\}} = \delta (t- x \cdot \omega),
\]
where $f$ is any $C^2(\R^n)$ function with compact support in $B$.   The initial condition satisfied by $\widetilde{U}$ is not affected by the exponential factor $e^{-f}$ since for $t<-1$ the distribution $\delta (t-x\cdot \omega)$ is supported in $\{ x \cdot \omega  < -1\}$, a region where $f$ vanishes. On the other hand   we also have  that $\widetilde{U}|_{\Sigma}= U|_{\Sigma}$ since the support of $f$ is contained in $B$. Hence, at best one can recover the magnetic field $d\A$ from the boundary data $U|_{\Sigma}$. 
We now state two uniqueness results for the inverse problem that we have just introduced.

\begin{Theorem} \sl  \label{thm:2n_magnetic} 
Let  $\A_1,\A_2 \in C^{\M+2}_c(\R^n; \C^n)$ and $q_1,q_2  \in  C^{\M}_c(\R^n; \C)$ with compact support in $B$ and such that
 \begin{equation*} 
\int_{-\infty}^\infty  e_n \cdot \A_k(x_1,\dots,x_{n-1},s) \, ds = 0 \quad \text{for} \;  k=1,2, \;  \text{and all} \;(x_1,\dots,x_{n-1}) \in \R^{n-1}.
 \end{equation*}
Also, let $1 \le j \le n$ and consider the   $2n$ solutions $U_{k, \pm j}(x,t)$ of
   \begin{equation}\label{eq:magnetic2n_wave}
( \partial_t^2 + (D +\A_k )^2 +  q_k )U_{k, \pm j}=0  \; \, \text{in}\; \R^{n+1}, \quad U_{k, \pm j}|_{\left\{t<-1\right\}} = \delta (t- \pm  x_j).
\end{equation}
 If for each $1 \le j \le n$ one has $U_{1,\pm j} = U_{2, \pm j}$ on the surface $\Sigma \cap \{ t \ge  \pm  x_j \}$, then $d \A_1 = d \A_2$ and $q_1 = q_2$.
\end{Theorem}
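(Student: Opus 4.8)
\emph{Sketch of the intended proof.} The plan is to reduce the statement, as in \cite{RakeshSalo1, RakeshSalo2}, to a Carleman estimate for the wave operator applied to the difference of two solutions, the first order term forcing some extra bookkeeping. I would begin by recording the structure of the direct problem. By the progressing wave analysis of this section, each $U_{k,\pm j}$ has the form
\[
U_{k,\pm j}(x,t)=\alpha_{k}^{\pm j}(x)\,\delta(t\mp x_j)+\rho_{k}^{\pm j}(x,t),
\]
with $\rho_{k}^{\pm j}$ of limited but sufficient regularity; the leading amplitude $\alpha_{k}^{\pm j}$ solves a transport equation along the lines parallel to $e_j$ with forcing $\pm e_j\cdot\A_k$, while the next coefficient in the expansion of $\rho_k^{\pm j}$ near $\{t=\pm x_j\}$ also involves $q_k$. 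In tandem I would use the Heaviside initial data problem \eqref{eq:direct_2}, whose solution is essentially a time primitive of $U_{k,\pm j}$ and so has a jump of size $\alpha_k^{\pm j}$ across $\{t=\pm x_j\}$ depending on $\A_k$ but not on $q_k$; this is the mechanism that will let one decouple $\A$ from $q$. I would also invoke the gauge invariance noted before the statement: replacing $\A_1$ by $\A_1+\nabla f$ with $f\in C_c^2(B)$ alters neither $d\A_1$ nor the data $U_{1,\pm j}|_\Sigma$, so $d\A_1=d\A_2$ is the correct target, and the potentials may be put in whatever normalized form the estimates require.

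Fix $j$ and a sign and set $w=U_{1,\pm j}-U_{2,\pm j}$. It solves
\[
(\partial_t^2+H_{\A_1,q_1})\,w=\big(H_{\A_2,q_2}-H_{\A_1,q_1}\big)U_{2,\pm j},
\]
whose right-hand side is obtained by applying to $U_{2,\pm j}$ a first order operator with coefficients built from $\A_1-\A_2$, $q_1-q_2$ and $\A_1^2-\A_2^2$; moreover $w$ vanishes for $t<-1$, vanishes in $\{t<\pm x_j\}$ by finite speed of propagation, and vanishes on $\Sigma\cap\{t\ge\pm x_j\}$ by hypothesis. I would then feed $w$ — after removing its singular part, i.e.\ running the Heaviside version of the problem first and the $\delta$ version afterwards — into the Carleman estimate of Section \ref{sec:Carleman}, with a weight maximal on the characteristic plane $\{t=\pm x_j\}$ and decaying away from it; together with an energy estimate absorbing the first order source term, this yields an inequality controlling a weighted $L^2$ norm of the coefficient difference restricted to $\{t=\pm x_j\}$, modulo a boundary integral over $\Sigma$ and one further leftover boundary term that the estimate does not absorb on its own.

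The integral over $\Sigma\cap\{t\ge\pm x_j\}$ vanishes because $w$ does there, and the leftover boundary term is cancelled by adding the companion estimate for the opposite direction $-e_j$, the two such terms appearing with opposite signs; this is where the remaining $n$ measurements are spent. Carried out with the Heaviside data, the procedure gives a bound of the shape (weighted norm of the $e_j$-part of $\A_1-\A_2$) $\lesssim$ (weighted norm of $q_1-q_2$), and with the $\delta$ data a bound (weighted norm of $q_1-q_2$) $\lesssim$ (weighted norm of $\A_1-\A_2$-data); hypothesis \eqref{eq:A_condition} is precisely what closes the first of these, forcing the line integral of $e_n\cdot\A_k$ to vanish and removing the term that would otherwise reintroduce $q$. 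Running the two estimates against each other, with the Carleman parameter large enough that the coupling terms are absorbed, forces $q_1-q_2\equiv0$ and, once the information from the directions $\pm e_1,\dots,\pm e_n$ is combined, $d\A_1=d\A_2$.

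I expect the difficulty to be concentrated in the Carleman step and to be threefold: (i) making the Carleman and energy estimates rigorous for solutions $U_{k,\pm j}$ that are only finitely smooth and partly distributional — which is exactly what dictates the large value of $\M$ in \eqref{eq:m} and makes the preliminary structural analysis of \eqref{eq:direct} and \eqref{eq:direct_2} indispensable; (ii) choosing the weight and arranging the integrations by parts so that the first order source term is genuinely absorbed by the energy estimate while the surface term over $\Sigma$ cancels cleanly between the $+e_j$ and $-e_j$ problems; and (iii) disentangling $\A$ from $q$, which seems possible only by playing the Heaviside initial data against the $\delta$ one and exploiting \eqref{eq:A_condition}. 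It is the simultaneous fulfilment of these three requirements, rather than any individual computation, that I take to be the crux of the proof.
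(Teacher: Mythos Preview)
Your high-level strategy matches the paper's: Carleman estimates, the $H$-wave versus $\delta$-wave dichotomy to separate $\A$ from $q$, and the $\pm e_j$ pairing to dispose of a dangerous boundary term. Two concrete points, however, are not correctly identified, and the first is a genuine gap.

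The mechanism by which the $+e_j$ and $-e_j$ measurements combine is not simply ``add the two Carleman estimates, the leftover terms appearing with opposite signs.'' In Lemma~\ref{lemma:w+w-} the estimate is applied with a function $w_+$ on $Q_+$ and a function $w_-$ on $Q_-$; the $\Gamma$-boundary contributions do carry opposite signs, but they cancel only when $w_+|_\Gamma = w_-|_\Gamma$, otherwise the mismatch is multiplied by $\sigma^3 e^{c\sigma}$ and destroys the argument. Hence $w_-$ must be \emph{constructed} from the $-e_j$ data so as to match $w_+$ on $\Gamma$. In the paper one sets $w_-(y,z,t) = e^{\mu_j(y)}\big(u_{1,-j}(y,z,-t) - u_{2,-j}(y,z,-t)\big)$, a time reflection composed with a gauge factor $e^{\mu_j}$ (where $\mu_j$ is the common line integral of $e_j\cdot\A_k$), and then verifies the matching $w_-(y,z,z)=w_+(y,z,z)$ by hand. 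Your proposal does not mention this matching requirement at all; without it the $\Gamma$-term is uncontrolled and the inequalities you describe are never reached.

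Second, you mislocate the role of hypothesis \eqref{eq:A_condition}. It is not used to ``close'' the $H$-wave estimate; that estimate works for every direction and yields $\norm{e^{\sigma\phi_0} e_j\cdot(\A_1-\A_2)}\lesssim\gamma(\sigma)(\norm{e^{\sigma\phi_0}(\A_1-\A_2)}+\norm{e^{\sigma\phi_0}(V_1-V_2)})$ regardless. Rather, \eqref{eq:A_condition} guarantees that the gauge function $f_k(x)=\int_{-\infty}^{x_n} e_n\cdot\A_k$ is compactly supported in $B$, so one may first fix the gauge $e_n\cdot\A_1=e_n\cdot\A_2=0$. Under this gauge, (i) the $e_n$-component of $\A_1-\A_2$ is already zero, so the $H$-wave step needs only $j=1,\dots,n-1$, and (ii) for the direction $e_n$ the phase $\psi$ in Proposition~\ref{prop:sol_smooth_2} vanishes, so the $\delta$-wave trace on $\Gamma$ becomes $-\tfrac12\int V_k$ and delivers $|V_1-V_2|$ cleanly. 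For a general direction with $\psi\neq 0$ the $\delta$-wave boundary relation is a nonlinear mixture of $\A$ and $V$, and the bound ``$\norm{q_1-q_2}\lesssim\norm{\A_1-\A_2}$'' you assert does not follow. In the paper the $H$-wave argument is run for $j=1,\dots,n-1$ and the $\delta$-wave argument only for $j=n$.
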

\noindent  As in the introduction, we highlight  that the restriction of the distribution $U_{1,\pm j}$ to the surface $\Sigma$  is well defined and vanishes in the open set $\Sigma \cap \{ t < \pm  x_j \}$, see the comments after Proposition \ref{prop:sol_smooth_2} for more details.
\noindent Theorem \ref{thm:main_2n_stationary} follows directly from this result and Theorem \ref{thm:equivalence}.
On the other hand, if we  fix the zero order term to be always the same, then the gauge invariance disappears and one can recover completely the first order term  of the perturbation. To state this result we use the Hamiltonian in the form \eqref{eq:hamilt_WV}.
\begin{Theorem} \sl    \label{thm:2n_drift_nogauge}
Let $\W_1,\W_2 \in C^{\M+2}_c(\R^n; \C^n)$ and  $V \in C^{\M}_c(\R^n; \C)$ with compact  support in $B$.  Let $1 \le j \le n$ and $k=1,2$, and consider the corresponding  $2n$ solutions $U_{k, \pm j}$ satisfying
  \begin{equation} \label{eq:drift2n_wave}
( \partial_t^2   - \Delta +  2\W_k \cdot \nabla + V) U_{k, \pm j} = 0, \; \, \text{in}\; \R^{n+1}, \quad U_{k, \pm j}|_{\left\{t<-1\right\}}= \delta (t- \pm x_j).
\end{equation}
If for each $1 \le j \le n$ one has $U_{1,\pm j} = U_{2, \pm j}$ on the surface $ \Sigma \cap \{ t \ge  \pm  x_j \}$, then $\W_1 = \W_2$.
\end{Theorem}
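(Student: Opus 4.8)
The plan is to run the Bukhgeim--Klibanov / Carleman estimate scheme of \cite{RakeshSalo1,RakeshSalo2} sketched in the introduction, applied to the difference of the two solutions for each pair of opposite directions $\pm e_j$, $1\le j\le n$. What makes this result cleaner than Theorem~\ref{thm:2n_magnetic} is that the zeroth order term $V$ is common to both Hamiltonians, so the equation satisfied by a difference of solutions carries no zeroth order term and has a source depending only on $W:=\W_1-\W_2$; hence no decoupling step is required and no condition of the type \eqref{eq:A_condition} is needed.

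\emph{Reduction to vanishing Cauchy data.} Fix $1\le j\le n$ and take $\omega=e_j$. It is convenient to work with the Heaviside-data version of \eqref{eq:drift2n_wave}: let $\ma U_{k,+j}$ solve $(\p_t^2-\Delta+2\W_k\cdot\nabla+V)\ma U_{k,+j}=0$ with $\ma U_{k,+j}|_{\{t<-1\}}=H(t-x_j)$, so that $\p_t\ma U_{k,+j}=U_{k,+j}$. By the structure results of Section~\ref{sec:direct_scat_prob} and Appendix~\ref{appendix:wave_eq}, with $\M$ as in \eqref{eq:m} the function $\ma U_{k,+j}$ has the form $\ma U_{k,+j}(x,t)=\alpha_k(x)H(t-x_j)+\beta_k(x,t)$ with $\beta_k$ of higher regularity vanishing for $t\le x_j$, and with principal amplitude $\alpha_k(x)=\exp\!\big(\int_{-\infty}^{x_j}(\W_k)_j(x_1,\dots,s,\dots,x_n)\,ds\big)$, obtained from the transport equation $\p_{x_j}\alpha_k=(\W_k)_j\,\alpha_k$ along the lines parallel to $e_j$; in particular $\alpha_k$ is smooth and nowhere zero. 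Set $v_j:=\ma U_{1,+j}-\ma U_{2,+j}$. Subtracting the equations, the $V$-terms cancel and
\[
(\p_t^2-\Delta+2\W_1\cdot\nabla+V)\,v_j=-2\,W\cdot\nabla\ma U_{2,+j}\quad\text{in }\R^{n+1},
\]
with $v_j=0$ for $t<x_j$. The hypothesis $U_{1,+j}=U_{2,+j}$ on $\Sigma\cap\{t\ge x_j\}$ gives $v_j=0$ on $\Sigma\cap\{t\ge x_j\}$, hence on all of $\Sigma$; and since $v_j$ solves the free wave equation in $\{|x|>1\}$ with zero data at $t=-1$, zero Dirichlet data on $\partial B\times(-T,T)$, and compact spatial support at each time, an energy estimate forces $v_j\equiv0$ in $\{|x|>1\}\times(-T,T)$, so $v_j$ and $\p_\nu v_j$ both vanish on $\Sigma$.

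\emph{Isolating $W_j$ and the Carleman step.} Expanding $\nabla\ma U_{2,+j}$ one sees that the source $-2W\cdot\nabla\ma U_{2,+j}$ has principal part $2W_j\alpha_2\,\delta(t-x_j)$, carried by the characteristic plane $\{t=x_j\}$. To turn the source into an honest function --- necessary in order to square it against the Carleman weight --- I would, if needed, pass to one further $t$-antiderivative of the initial data, so that the reference solution has leading term $\widetilde\alpha_2(x)(t-x_j)_+$ and the associated source becomes $2W_j\widetilde\alpha_2\,H(t-x_j)$ plus smoother terms, with $\widetilde\alpha_2$ smooth and nonvanishing, its trace on $\{t=x_j\}$ equal to $2\widetilde\alpha_2 W_j$; the correspondingly modified $v_j$ still vanishes for $t<x_j$ and still has zero Cauchy data on $\Sigma$. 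One is thus reduced to a function $v_j$, vanishing for $t<x_j$, with zero Cauchy data on $\Sigma$, solving $(\p_t^2-\Delta+2\W_1\cdot\nabla+V)v_j=F_j$ with $F_j|_{t=x_j^+}=c(x)\,W_j(x)$, $c$ nowhere zero. Applying the Carleman estimate of Section~\ref{sec:Carleman} --- weight large on $\{t=x_j\}$ and decreasing away from it --- to $v_j$ and to $\p_t v_j$, and combining it with an energy estimate to absorb the first and zeroth order terms, exactly as in the Bukhgeim--Klibanov method of \cite{BukhgeimKlibanov,RakeshSalo1,RakeshSalo2}, yields a bound on $\int_B|W_j|^2e^{2s\varphi}$ by the source and the boundary terms. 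The boundary terms on $\Sigma$ vanish by the zero Cauchy data, those on $\{t=T\}$ are killed by the choice of weight, and there remains a single boundary term on the plane $\{t=x_j\}$ that cannot be absorbed; as in \cite{RakeshSalo2} it is cancelled by adding the entirely analogous estimate for the opposite direction $\omega=-e_j$ (built from $U_{k,-j}$ and the hypothesis on $\Sigma\cap\{t\ge-x_j\}$), which is precisely why $2n$ and not $n$ measurements occur. Letting $s\to\infty$ forces $W_j\equiv0$ in $B$, and ranging over $j=1,\dots,n$ gives $\W_1=\W_2$.

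\emph{Main obstacle.} The delicate point is the treatment of the leftover boundary term on $\{t=x_j\}$ and the proof that it is genuinely annihilated by the opposite-direction estimate: this hinges on the exact symmetry of the Carleman weight and of the integration domain under $(\omega,x_j)\mapsto(-\omega,-x_j)$, and on the nonvanishing of the principal amplitude, which is what lets $W_j$ be read off from the trace of the source. A second, more technical, ingredient is keeping track of the regularity of $v_j$ and of the reference solution up to and across the characteristic plane $\{t=x_j\}$; this is exactly what dictates the hypotheses $\A_k\in C^{\M+2}$, $V\in C^{\M}$ with $\M$ as in \eqref{eq:m}, through the mapping properties of the wave operator recorded in Appendix~\ref{appendix:wave_eq}.
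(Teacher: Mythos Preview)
Your overall architecture is right and matches the paper's: pass to $H$-waves, form the difference for each direction, show the lateral Cauchy data vanish, apply the Carleman estimate of Section~\ref{sec:Carleman}, and use the opposite direction to kill the leftover term on $\Gamma=\{t=x_j\}$. Two points where your execution diverges from the paper and would need correction or sharpening:

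\medskip
\textbf{(i) No extra antiderivative is needed.} The paper does not work with the distributional difference $\mathcal U_{1,+j}-\mathcal U_{2,+j}$ and does not need to integrate once more in $t$ to make the source a function. Instead it takes $w_+:=u_{1,j}-u_{2,j}$, the difference of the \emph{smooth} amplitudes from Proposition~\ref{prop:sol_smooth_1} (so $w_+\in C^2(\overline{Q}_+)$), which already satisfies
\[
(\square+2\W_1\cdot\nabla+V)\,w_+ = 2(\W_2-\W_1)\cdot\nabla u_{2,j}\in L^\infty(Q_+),
\]
together with the transport relation $(\partial_t+\partial_z-e_j\cdot\W_1)w_+ = e_j\cdot(\W_1-\W_2)\,u_{2,j}$ on $\Gamma$. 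It is this transport identity on $\Gamma$, not a ``trace of the source'', that encodes $W_j$ and feeds directly into condition \eqref{eq:condw+w-2} of Lemma~\ref{lemma:w+w-}. Your detour through a further antiderivative and a ``$F_j|_{t=x_j^+}=c(x)W_j$'' reading is unnecessary and makes the regularity bookkeeping harder.

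\medskip
\textbf{(ii) The $\Gamma$-cancellation is not a bare symmetry; it requires a specific construction of $w_-$.} The paper does not simply ``add the analogous estimate for $-e_j$''. It defines
\[
w_-(y,z,t):=e^{\mu_j(y)}\bigl(u_{1,-j}(y,z,-t)-u_{2,-j}(y,z,-t)\bigr),\qquad \mu_j(y)=\int_{-\infty}^{\infty} e_j\cdot\W_1(y,s)\,ds,
\]
where $\mu_j$ is known from the data (it equals the same integral for $\W_2$ because $u_{1,j}=u_{2,j}$ on $\Sigma_+\cap\Gamma$). The time reflection places $w_-$ in $Q_-$, and the factor $e^{\mu_j}$ is exactly what forces $w_+|_\Gamma=w_-|_\Gamma$; see \eqref{eq:vanishing_cond_W_proof}. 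Only with this matching do the $\Gamma$-boundary integrals in the Carleman estimates on $Q_+$ and $Q_-$ cancel when summed (Lemma~\ref{lemma:w+w-}, then Lemma~\ref{lemma:carl_2}). Your appeal to ``symmetry of the weight under $(\omega,x_j)\mapsto(-\omega,-x_j)$'' does not by itself give $w_+=w_-$ on $\Gamma$, and without that equality the quadratic boundary terms do not cancel. This is the step you should make explicit.

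\medskip
With these two adjustments your argument becomes the paper's: verify \eqref{eq:condw+w-}--\eqref{eq:condw+w-2} with $\A_\pm=\W_1-\W_2$, $q_\pm=0$, $h_\pm=e_j\cdot(\W_1-\W_2)$, check $w_\pm|_{\Sigma_\pm}=\partial_\nu w_\pm|_{\Sigma_\pm}=0$ via Lemma~\ref{lemma:normal_estimate}, apply Lemma~\ref{lemma:carl_2} for each $j$ and sum to get \eqref{eq:carlfinal_2} with $q_\pm=0$, hence $\W_1=\W_2$.
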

\noindent 
As in the previous case, Theorem \ref{thm:main_drif2n_stationary} follows from  Theorem \ref{thm:2n_drift_nogauge} with $\W_k = -i\A_k$ and Theorem \ref{thm:equivalence}. 
To see this, notice  that for $k=1,2$ we can write the Hamiltonians in  Theorem \ref{thm:main_drif2n_stationary} as $H_{\A_k} = (D +\A_k )^2 +  q_k $ for $q_k = V-\A_k^2 -D \cdot \A_k$. And since by assumption $\A_k$ is real and $H_{\A_k}$ is self-adjoint, $q_k$ must be a real function. This means that the conditions required to apply  Theorem  \ref{thm:equivalence} are satisfied.

 In Theorem \ref{thm:2n_drift_nogauge} one needs $2n$ measurements to obtain the unique determination of the first order coefficient $\W$. Therefore, one could expect to need  $2(n+1)$ measurements  in Theorem  \ref{thm:2n_magnetic}, since  one now also proves the unique determination of $q$. In  fact, $2n$   measurements are always enough:  the gauge invariance  essentially reduces one degree of freedom by making it  possible to choose a gauge in which the $n$th component of $\A_1 -\A_2$ vanishes (as we shall see later on,  the fact that the solutions $ U_{n, \pm j}$ coincide at $ \Sigma \cap \{ t = \pm  x_n \}$ guarantees that there are no obstructions for this   gauge transformation).


\subsection*{The direct problem}

In order to prove the previous theorems we need to study the direct problem \eqref{eq:wave_V} in more detail. Let $\omega \in S^{n-1}$. Assume $\W \in C^{\M+2}_c(\R^n; \C^n)$ and $V\in  C^{\M}_c(\R ;\C)$, and consider the   initial value problems for the wave operator
\begin{equation}\label{eq:wave_2}
(\partial_t^2 + L_{\W,V}) U_\delta =0 \; \text{in}\; \R^{n+1}, \quad U_\delta|_{\left\{t<-1\right\}}= \delta (t-x\cdot \omega),
\end{equation}
 and 
\begin{equation}\label{eq:wave_1}
(\partial_t^2 + L_{\W,V}) U_H =0 \; \text{in}\; \R^{n+1}, \quad U_H|_{\left\{t<-1\right\}}= H (t-x\cdot \omega),
\end{equation}
where $L_{\W,V}$ was defined in \eqref{eq:hamilt_WV}. 
As mentioned in the introduction, the reason we also consider the second equation is that the $\delta$-wave $U_\delta$ and $H$-wave $U_H$ contain equivalent information about $\W$ and $V$ (see Proposition \ref{prop:H_delta_equivalence}  below), but   $H$-waves  decouple the information on $\W$ from the information on   $V$.  

To study \eqref{eq:wave_2} and \eqref{eq:wave_1}, it is convenient  use the following coordinate system in $\R^n$.  We take $x = (y,z)$ where $y\in \R^{n-1}$ and $z = x \cdot \omega$. For a fixed $T> 7$, it will   be helpful to introduce the following subsets of $\mathbb{R}^{n+1}$ (see Figure \ref{fig:Q}):
\begin{equation}\label{useful:subsets}
\begin{alignedat}{2}
Q &:= B \times (-T, T) ,&   \qquad \Sigma &:= \partial B \times (-T, T), \\ 
Q_{\pm} &:= Q\cap \left\{ \pm (t-z)>0 \right\},&  \qquad     \Sigma_{\pm} &:= \Sigma \cap \left\{ \pm (t-z)>0\right\},\\
\Gamma &:= \overline{Q}\cap \left\{t=z \right\},  & \qquad  \Gamma_{\pm T}&:= \overline{Q}\cap \left\{ t=\pm T\right\}.
\end{alignedat}
\end{equation}


\definecolor{yqqqyq}{rgb}{0.5019607843137255,0,0.5019607843137255}
\definecolor{ffqqqq}{rgb}{1,0,0}
\definecolor{qqqqff}{rgb}{0,0,1}
\begin{figure}
\centering
\begin{tikzpicture}[line cap=round,line join=round,>=triangle 45,x=1cm,y=1cm, scale=0.74] 
\clip(-9.358703071672355,-5.007890305104027) rectangle (6.061023890784982,5.454473037632221);
\draw [rotate around={0:(0,3.5)},line width=0.8pt,color=qqqqff,fill=qqqqff,fill opacity=0.03] (0,3.5) ellipse (3cm and 0.5070925528371095cm);
\draw [rotate around={0:(0,-3.5)},line width=0.8pt,color=ffqqqq,fill=ffqqqq,fill opacity=0.02] (0,-3.5) ellipse (3cm and 0.5070925528371095cm);
\draw [line width=0.8pt] (-4.366666666666666,4.3)-- (3.303333333333333,4.3);
\draw [line width=0.8pt] (3.303333333333333,4.3)-- (4.37,2.7);
\draw [line width=0.8pt] (4.37,2.7)-- (-3.3,2.7);
\draw [line width=0.8pt] (-3.3,2.7)-- (-4.366666666666666,4.3);
\draw [line width=0.8pt] (-4.366666666666666,-2.7)-- (-3.3,-4.3);
\draw [line width=0.8pt] (-3.3,-4.3)-- (4.366666666666666,-4.3);
\draw [line width=0.8pt] (4.366666666666666,-4.3)-- (3.3,-2.7);
\draw [line width=0.8pt] (-4.366666666666666,-0.8)-- (-3.3,-2.4);
\draw [line width=0.8pt] (-3.3,-2.4)-- (4.37,0.78);
\draw [line width=0.8pt] (4.37,0.78)-- (3.303333333333333,2.38);
\draw [rotate around={22.519010891489422:(0.024386601355064673,-0.000580261758916719)},line width=0.4pt,dash pattern=on 1pt off 1pt,color=yqqqyq,fill=yqqqyq,pattern=north east lines,pattern color=yqqqyq] (0.024386601355064673,-0.000580261758916719) ellipse (3.2239052113396367cm and 0.6792461319238752cm);
\draw [line width=0.8pt,color=qqqqff] (-2.995294206724416,2.76)-- (-2.995294206724416,-1.16);
\draw [line width=0.8pt,color=ffqqqq] (-2.995294206724416,-2.273668263022638)-- (-2.995294206724416,-3.471608544965999);
\draw [line width=0.4pt,dash pattern=on 1pt off 1pt,color=qqqqff] (3,3.5)-- (3,2.7);
\draw [line width=0.8pt,color=qqqqff] (3,2.7)-- (3,1.2397727958346456);
\draw [line width=0.8pt,color=ffqqqq] (3,0.21199478487613982)-- (3,-3.5);
\draw [line width=0.8pt] (-4.366666666666666,-0.8)-- (-2.9952942067244166,-0.23142575976318686);
\draw [line width=0.4pt,dash pattern=on 1pt off 1pt] (-2.9952942067244166,-0.23142575976318686)-- (3,2.254237288135591);
\draw [line width=0.4pt,dash pattern=on 1pt off 1pt,color=qqqqff] (-2.997216183507296,3.5218404273387955)-- (-2.995294206724416,2.76);
\draw [line width=0.8pt] (3.3033333333333372,2.38)-- (3,2.254237288135591);
\draw [line width=0.8pt] (-4.366666666666667,-2.7)-- (-2.9952942067244153,-2.7);
\draw [line width=0.4pt,dash pattern=on 1pt off 1pt] (-2.9952942067244153,-2.7)-- (3,-2.7);
\draw [line width=0.8pt] (3.3,-2.7)-- (3,-2.7);
\draw [color=qqqqff](-4.04085324232082,4.252981103662169) node[anchor=north west] {$\mathit{\large t=T}$};
\draw [color=ffqqqq](2.194778156996587,-3.7212169189804207) node[anchor=north west] {$\mathit{\large t=-T}$};
\draw [color=yqqqyq](-3.6101023890784987,-1.4337975658717894) node[anchor=north west] {$\mathit{\large t=z}$};
\draw [color=qqqqff](-2.605017064846417,1.8384828976030583) node[anchor=north west] {$\mathbf{\Huge {Q_+}}$};
\draw [color=ffqqqq](0.9025255972696247,-1.3861429960153595) node[anchor=north west] {$\mathbf{\Huge {Q_-}}$};
\draw [color=yqqqyq](0.7589419795221843,0.8853915004744619) node[anchor=north west] {$\mathit{\mathbf{\Huge {\Gamma}}}$};
\draw [line width=0.4pt,dash pattern=on 1pt off 1pt] (0,0)-- (2.4886792452830186,0);
\draw [->,line width=0.4pt] (2.4886792452830186,0) -- (5,0);
\draw [line width=0.4pt,dash pattern=on 1pt off 1pt] (0,0)-- (-3.8377875673321737,-1.5933186490017377);
\draw [->,line width=0.4pt] (-3.8377875673321737,-1.5933186490017377) -- (-4.973404115610644,-2.064147934989421);
\draw [line width=0.4pt,dash pattern=on 1pt off 1pt] (0,0)-- (0,3.4739413680781777);
\draw [->,line width=0.4pt] (0,3.4739413680781777) -- (0,5);
\draw [line width=0.4pt,dash pattern=on 1pt off 1pt] (0.001596349391585201,0)-- (3.831428422953029,1.5878573655704546);
\draw [->,line width=0.4pt,dash pattern=on 1pt off 1pt] (3.831428422953029,1.5878573655704546) -- (5.044755893953274,2.0909057824910255);
\draw (5.045938566552902,0.20234266586563446) node[anchor=north west] {$z$};
\draw (-0.1435836177474403,5.571424203023394) node[anchor=north west] {$t$};
\draw (-5.517713310580206,-1.815034124723228) node[anchor=north west] {$y$};
\end{tikzpicture}
\caption{The regions $Q$, $Q_\pm$ and $\Gamma$}  \label{fig:Q}
\end{figure}
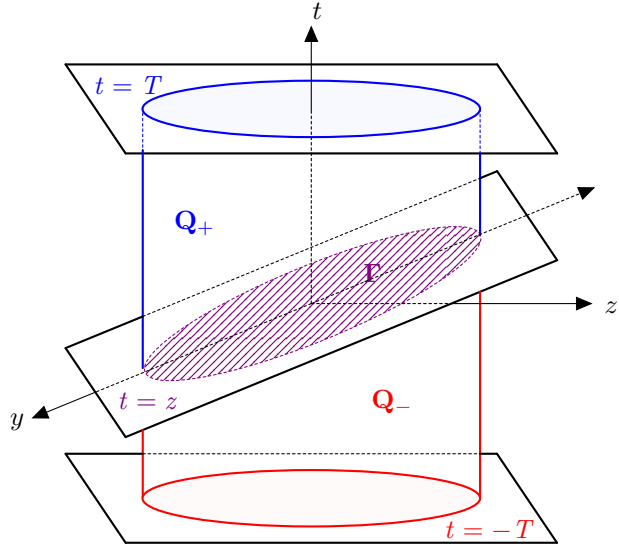


We now give a heuristic motivation of the existence of solutions $U_\delta$ and $U_H$ of \eqref{eq:wave_2} and \eqref{eq:wave_1}. For the interested reader we give a proof in  Section  \ref{subsec:existence} of the properties that we now state, by means of  the progressing wave expansion method.
We start by making the ansatz of looking for possible solutions of \eqref{eq:wave_2} and \eqref{eq:wave_1} in the family of functions satisfying
\[
U(y, z, t)=  f(y,z,t) H(t-z) + g(y,z,t) \delta(t-z),
\]
 where $f(y,z,t)$ and $g(y,z,t)$  are $C^2$ functions in $\{t \ge z\}$. A straightforward computation shows that
\begin{equation} \label{eq:u_v_computation}
\begin{aligned}
(\p_t^2+ L_{\W,V})U&= \left[(\p_t^2+ L_{\W,V})f\right]H(t-z)\\
 &+ \left[(\p_t^2+ L_{\W,V})g +2(\p_t+\p_z - \omega \cdot\W)f \right]\delta(t-z) \\
  &+ 2\left[ (\p_t+\p_z- \omega \cdot\W)g\right] \partial_t \delta(t-z).
\end{aligned} 
\end{equation}
In the case of equation \eqref{eq:wave_1} to satisfy the initial condition we need to have  $f = u $ where $u$ is a function satisfying $u(x,t) = 1$ for all $t<-1$, and $g(x,t)=0$ for all $(x,t) \in \R^{n+1}$.    Then \eqref{eq:u_v_computation} implies that $u$ must satisfy
\begin{equation} \label{eq:nu_1}
\begin{aligned}
(\partial_t^2 + L_{\W,V}) u &=0  \qquad \text{in}\; \left\{ t> z\right\}, \\ 
\partial_t u + \partial_z u - \omega \cdot\W u &= 0 \qquad \text{in } \left\{ t= z\right\}.
\end{aligned}
\end{equation}
The unique solution of the last ODE is
\[
 u(y,z,z) = e^{\int_{-\infty}^{z}  \omega \cdot\W(y,s) \,ds},
\] 
since it has to satisfy the initial condition  $u(y,z,z)=1$ for $z<-1$.  We now state this and further results about the solution of \eqref{eq:wave_1}.

\begin{Proposition} \sl \label{prop:sol_smooth_1} 
Let $\omega \in S^{n-1}$ be fixed. Let $\W \in C^{\M+2}_c(\R^n; \C^n)$ and $V \in  C^{\M}_c(\R^n; \C)$. Define
\begin{equation} \label{eq:psi1}
 \psi(x) := \int_{-\infty}^{0}  \omega \cdot\W(x + s\omega ) \,ds.
\end{equation}
There is a unique  distributional solution $U_H(x,t;\omega)$ of \eqref{eq:wave_1}, and it is supported in the region $\left\{ t\geq x \cdot \omega \right\}$. 
In particular,
\begin{equation*}
 U_H(x,t;\omega) = u(x,t)H(t-x \cdot \omega),
 \end{equation*}
  where $u$  is a ${C^2}$ function in $\left\{ t\geq x \cdot \omega \right\}$ satisfying the IVP
\begin{equation}\label{eq:smooth_1}
\begin{alignedat}{2}
&(\partial_t^2 + L_{\W,V}) u =0&  \qquad &\text{in} \; \; \left\{ t> x \cdot \omega \right\}, \\ 
&u(x, x \cdot \omega) = e^{\psi(x)}& \qquad &\text{in} \; \; \left\{ t= x \cdot \omega\right\},\\
&u(x,t)=1 &  \qquad   &\text{in} \; \; \left\{ x \cdot \omega \le t<-1 \right\}.
\end{alignedat}
\end{equation}
\end{Proposition}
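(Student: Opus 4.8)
The plan is to make the progressing wave expansion sketched above rigorous. We look for $U_H$ in the form $U_H = u(x,t)\,H(t-x\cdot\omega)$ with $u\in C^2(\{t\ge x\cdot\omega\})$; by the distributional identity \eqref{eq:u_v_computation} with $f=u$ and $g=0$, such a $U_H$ satisfies $(\partial_t^2 + L_{\W,V})U_H = 0$ exactly when $(\partial_t^2 + L_{\W,V})u = 0$ in the open region $\{t>x\cdot\omega\}$ and the transport identity $(\partial_t + \partial_z - \omega\cdot\W)u = 0$ holds on the characteristic surface $\{t = x\cdot\omega\}$, while the requirement $U_H|_{\{t<-1\}} = H(t-x\cdot\omega)$ forces $u\equiv 1$ on $\{x\cdot\omega \le t < -1\}$, a region where $\W$ and $V$ vanish. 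Hence it is enough to produce a $C^2$ solution of the characteristic initial value problem \eqref{eq:smooth_1} and then to establish uniqueness.

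First I would integrate the transport equation along the light cone. The vector field $\partial_t + \partial_z$ (with $\partial_z = \omega\cdot\nabla_x$) is tangent to $\{t = x\cdot\omega\}$, its integral curves being $s\mapsto (x_0 + s\omega, x_0\cdot\omega + s)$; along such a curve the transport identity becomes the scalar ODE $\frac{d}{ds}u = (\omega\cdot\W)(x_0 + s\omega)\,u$, and since the curve has left $B$ — where $u=1$ — for $s$ sufficiently negative, integration yields $u(x_0, x_0\cdot\omega) = \exp\!\big(\int_{-\infty}^0 \omega\cdot\W(x_0+s\omega)\,ds\big) = e^{\psi(x_0)}$, consistent with $u\equiv 1$ where $x\cdot\omega<-1$ since $\psi$ vanishes there; conversely, because $\omega\cdot\nabla\psi = \omega\cdot\W$ by \eqref{eq:psi1}, the single characteristic datum $u|_{\{t=x\cdot\omega\}} = e^{\psi}$ already encodes the transport identity. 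With this datum, I would then solve the characteristic (Goursat-type) initial value problem $(\partial_t^2 + L_{\W,V})u = 0$ in $\{t>x\cdot\omega\}$, $u|_{\{t=x\cdot\omega\}} = e^{\psi}$. A convenient reduction introduces the characteristic coordinate $\mu = t - x\cdot\omega$, flattening the surface to $\{\mu=0\}$ and the region to $\{\mu>0\}$ and turning $\partial_t^2 + L_{\W,V}$ into $2(\omega\cdot\nabla_x)\partial_\mu - \Delta_x$ plus first- and zeroth-order terms; for such a problem $C^2$ solvability — in fact with the regularity inherited from $\W\in C^{\M+2}$ and $V\in C^{\M}$ — follows from standard hyperbolic energy estimates, the precise versions of which are assembled in Appendix \ref{appendix:wave_eq}. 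Equivalently one recasts the problem as an integral equation along the null rays and solves it by iteration; this is what is done in Section \ref{subsec:existence}.

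It then remains to verify that $U_H := u\,H(t-x\cdot\omega)$ indeed solves \eqref{eq:wave_1}: the equation holds by construction, and for $t<-1$ one has either $t < x\cdot\omega$, so $U_H = 0 = H(t-x\cdot\omega)$, or $x\cdot\omega \le t < -1$, so $x\notin B$ and $u = 1$, giving $U_H = H(t-x\cdot\omega)$; the support statement $\supp U_H \subset \{t\ge x\cdot\omega\}$ is immediate from the factor $H(t-x\cdot\omega)$. For uniqueness, if $\widetilde U_H$ is any distributional solution of \eqref{eq:wave_1}, then $W := \widetilde U_H - U_H$ is a distributional solution of $(\partial_t^2 + L_{\W,V})W = 0$ on $\R^{n+1}$ that vanishes for $t<-1$; since $\partial_t^2 + L_{\W,V}$ is strictly hyperbolic with sufficiently regular coefficients, its forward Cauchy problem has only the trivial solution (again Appendix \ref{appendix:wave_eq}), so $W\equiv 0$.

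The main obstacle is the step solving the characteristic initial value problem \eqref{eq:smooth_1} with the prescribed \emph{finite} regularity, together with the companion distributional uniqueness statement: since the Cauchy datum $H(t-x\cdot\omega)$ is only piecewise smooth, $u$ will in general not extend smoothly across $\{t = x\cdot\omega\}$, so one must track carefully how many derivatives of $\W$ and $V$ are consumed and in which function spaces $U_H$ — and later its trace on $\Sigma$ — lives. This bookkeeping is exactly what dictates the choice of $\M$ in \eqref{eq:m} and the reason the relevant wave-equation estimates are deferred to Appendix \ref{appendix:wave_eq}.
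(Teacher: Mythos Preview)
Your overall structure matches the paper's: the ansatz $U_H = u\,H(t-x\cdot\omega)$, the transport equation on $\{t=x\cdot\omega\}$ forcing $u|_{t=x\cdot\omega}=e^{\psi}$, and uniqueness via the forward Cauchy problem for a strictly hyperbolic operator (the paper cites \cite[Theorem~23.2.7]{Hormander}) are all exactly as in the paper.

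The one point where your account diverges from what is actually done is the existence of a $C^2$ function $u$. You propose to solve the Goursat problem $(\partial_t^2+L_{\W,V})u=0$ in $\{t>x\cdot\omega\}$ with characteristic datum $u|_{t=x\cdot\omega}=e^{\psi}$ directly, either by ``standard hyperbolic energy estimates'' or by ``an integral equation along the null rays solved by iteration''. Neither description is quite what Section~\ref{subsec:existence} does. The paper does \emph{not} attack the characteristic initial value problem head-on; instead it runs the progressing wave expansion to higher order: one writes $u=\sum_{j=0}^{N} a_j(x)(t-x\cdot\omega)^j + R_N$, determines the $a_j$ recursively from transport ODEs $(\partial_z-\omega\cdot\W)a_{j+1}=-\tfrac{1}{2(j+1)}L_{\W,V}a_j$ along the rays (so $a_0=e^\psi$), and then the remainder $R_N$ solves a \emph{non-characteristic} Cauchy problem with source $-(L_{\W,V}a_N)(t-x\cdot\omega)^N_+$ and zero data for $t<-1$, for which \cite[Theorems~9.3.1--9.3.2]{H76} give existence in Sobolev spaces. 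The point of pushing the expansion to order $N$ is precisely to make this source $C^\beta$ with $\beta$ large enough that Sobolev embedding yields $R_N\in C^2$; this is the bookkeeping that fixes $\M$ in~\eqref{eq:m}. Your last paragraph correctly identifies this as the crux, but the mechanism is a finite asymptotic expansion reducing to a standard Cauchy problem, not an energy estimate for the Goursat problem nor a Picard iteration for $u$ itself.
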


\noindent Notice  that the boundary value of $u$ at $\{t=x\cdot \omega\}$ depends only on $\W$ and not on the zero order term $V$.  

We now study the solutions  of \eqref{eq:wave_2}.  In this case we have to consider \eqref{eq:u_v_computation} with $g =1$ for $t<-1$, and  $f= v$ where $v$ is a function  satisfying $v(x,t) = 0$ for $t< -1$. Then the following conditions must be satisfied:
\begin{equation} \label{eq:u_v_cond2}
\begin{alignedat}{2}
(\partial_t^2 + L_{\W,V}) v &=0 &  \quad  &\text{in} \; \;  \{t> z\}, \\ 
\partial_t v + \partial_z v - \omega \cdot\W v &= -1/2( \partial_t^2 + L_{\W,V})g &  \quad  &\text{in} \; \;  \left\{t= z\right\}, \\
\partial_t g + \partial_z g - \omega \cdot\W g &= 0 & \quad &\text{in} \; \;  \left\{ t= z\right\},\\
\partial_t (\partial_t g) + \partial_z (\partial_t g) - \omega \cdot\W (\partial_t g) &= 0 &  \quad &\text{in} \; \;  \left\{ t= z\right\}.
\end{alignedat}
\end{equation}
The last two conditions are required so that $ \left[ (\p_t+\p_z- \omega \cdot\W)g\right] \partial_t \delta(t-z) $ vanishes completely in all $\R^{n+1}$ as a distribution. As in the case of \eqref{eq:nu_1}, the third ODE and the initial condition imply that 
\[
g(y,z,z)  = e^{\psi(y,z)} = e^{\int_{-\infty}^{z}  \omega \cdot\W(y,s) \,ds}, 
\]
and in fact we are going to choose $g(y,z,t) = e^{\psi(y,z)}$ for all $(y,z,t) \in \R^{n+1}$. We can freely do this: $g_1(y,z,t) \delta (t-z)  = g_2(y,z,t) \delta (t-z) $ iff $g_1(y,z,z) = g_2(y,z,z)$. Also, the previous choice implies that $\partial_t g = 0$, so that the last condition is satisfied too.  Computing explicitly $L_{\W,V}(g)$, the second equation in \eqref{eq:u_v_cond2} becomes the ODE
\[ \partial_t v + \partial_z v - \omega \cdot\W v = - \frac{1}{2}   e^\psi  (-\Delta \psi -| \nabla \psi|^2 + 2\W \cdot \nabla \psi  +V)\quad \text{in } \left\{ t= z \right\}, \]
with initial condition $v(y,z,z)  = 0$ if $z<-1$. The unique solution is then 
\[
v(y,z,z)=  
  - \frac{1}{2} e^{  \psi(y,z)} \int_{-\infty}^{z}  \left [ -\Delta \psi - |\nabla \psi|^2 +  2\W \cdot \nabla \psi + V \right ](y,s) \, ds.
  \]
We state this rigorously in the following proposition.
\begin{Proposition} \sl  \label{prop:sol_smooth_2}
Let $\omega \in S^{n-1}$ be fixed. Let $\W \in C^{\M+2}_c(\R^n; \C^n)$ and $V \in  C^{\M}_c(\R^n; \C)$. Consider $\psi$  as in \eqref{eq:psi1}. There is a unique distributional solution $U_\delta(x,t;\omega)$ of \eqref{eq:wave_2},  and it is supported  in the region $\left\{ t\geq x \cdot \omega\right\}$.
 Moreover 
 \begin{equation} \label{eq:U_delta}
 U_\delta(x,t;\omega) =  v(x,t)  H(t-x \cdot \omega) + e^{\psi(x)}\delta(t-x \cdot \omega),
 \end{equation}
  where $v$  is a $C^2$ function  in $\left\{ t\geq x \cdot \omega \right\}$ satisfying
\begin{equation}\label{eq:smooth_2}
\begin{alignedat}{2}
(\partial_t^2 + L_{\W,V}) v &=0 & \quad &\text{in}\;  \; \left\{ t> x \cdot \omega\right\}, \\ 
v(x, x \cdot \omega) &= F(x) & \quad &\text{in} \;  \; \left\{ t= x \cdot \omega\right\},  \\
v(x,t)&=0 & \quad   &\text{in} \;  \; \left\{ x \cdot \omega \le  t<-1 \right\},
\end{alignedat}
\end{equation}
with
\begin{equation*} 
  F(x)=  
 -\frac{1}{2} e^{  \psi(x)} \int_{-\infty}^{0} \left [ -\Delta \psi - |\nabla \psi|^2 +  2\W \cdot \nabla \psi + V \right ](x +s\omega) \, ds.
\end{equation*}
\end{Proposition}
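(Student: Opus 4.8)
The plan is to construct $U_\delta$ via the progressing wave expansion so that the formal computation \eqref{eq:u_v_computation} becomes exact, and then to get uniqueness from Cauchy uniqueness for the wave operator. First I would note that the choice $g=e^{\psi}$, with $\psi$ as in \eqref{eq:psi1} (which is independent of $t$), already kills the most singular term in \eqref{eq:u_v_computation}: since $\partial_t\psi=0$ and $\partial_z\psi=\omega\cdot\W$, one has $(\partial_t+\partial_z-\omega\cdot\W)e^{\psi}=0$, so the coefficient of $\partial_t\delta(t-z)$ vanishes identically and the last two lines of \eqref{eq:u_v_cond2} hold automatically. What then remains of \eqref{eq:u_v_computation} is exactly \eqref{eq:smooth_2}: one needs a function $v\in C^2(\{t\ge x\cdot\omega\})$ that (i) satisfies the transport ODE on the characteristic front $\{t=x\cdot\omega\}$, which together with the vanishing condition $v=0$ for $t<-1$ forces the boundary value $v|_{\{t=x\cdot\omega\}}=F$ computed just above the statement, and (ii) solves $(\partial_t^2+L_{\W,V})v=0$ in the open region $\{t>x\cdot\omega\}$.

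Hence the whole content of the proposition is the solvability of the characteristic initial value problem \eqref{eq:smooth_2} with $C^2$ regularity, and I would handle it exactly as the analogous problem \eqref{eq:smooth_1} in Proposition \ref{prop:sol_smooth_1}. Having fixed $F$ from the transport equation, peel off finitely many terms $\sum_{j=0}^{N}c_j(x)(t-x\cdot\omega)^j/j!$ of a progressing wave expansion, whose coefficients $c_j$ (with $c_0=F$) are determined recursively by the natural transport equations in the $\omega$-direction; the remainder $R_N$ then solves $(\partial_t^2+L_{\W,V})R_N=(\text{an explicit source})$ with $R_N=0$ for $t<-1$, i.e.\ a standard forward Cauchy problem for the wave operator with vanishing data at $\{t=-2\}$. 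Standard existence and energy estimates for the wave equation (collected in Appendix \ref{appendix:wave_eq}) then produce $R_N$ with regularity controlled by that of the source, and the compatibility built into the transport recursion together with $R_N=0$ for $t<-1$ forces $R_N$ to vanish on $\{t=x\cdot\omega\}$, so that $v:=R_N+\sum_{j\le N}c_j(x)(t-x\cdot\omega)^j/j!$ solves \eqref{eq:smooth_2}. Choosing $N$ large enough that $R_N\in C^2$, and tracking the derivative loss in the transport recursion and in the wave estimates, is precisely what pins down the value of $\M$ in \eqref{eq:m}.

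With such a $v$ in hand, $U_\delta$ as in \eqref{eq:U_delta} is a well-defined distribution because $v$ and $e^{\psi}$ are $C^2$ up to $\{t=x\cdot\omega\}$; the Leibniz rule then makes \eqref{eq:u_v_computation} rigorous and yields $(\partial_t^2+L_{\W,V})U_\delta=0$. The support statement is immediate from the factors $H(t-x\cdot\omega)$ and $\delta(t-x\cdot\omega)$, and for $t<-1$ one has $\psi=0$ and $v=0$ (the coefficients are supported in $B$, so the front has not yet met $B$), whence $U_\delta=\delta(t-x\cdot\omega)$ there. For uniqueness, if $U^{(1)}$ and $U^{(2)}$ are two distributional solutions of \eqref{eq:wave_2}, then $W:=U^{(1)}-U^{(2)}$ satisfies $(\partial_t^2+L_{\W,V})W=0$ in $\R^{n+1}$ and vanishes for $t<-1$, so it has vanishing Cauchy data on $\{t=-2\}$; uniqueness for the Cauchy problem for the wave operator with lower order terms (again Appendix \ref{appendix:wave_eq}) forces $W\equiv0$.

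The step I expect to be the main obstacle is the regularity bookkeeping in the second paragraph: transporting the equation off the characteristic surface $\{t=x\cdot\omega\}$ while keeping precise track of how many derivatives of $\W$ and $V$ are consumed, so as to land at $v\in C^2$ — this being the minimal regularity for which \eqref{eq:u_v_computation} is legitimate and for which $U_\delta$ can later be restricted to $\Sigma$. The transport ODEs, the verification of the boundary value $F$, the support and initial-data checks, and the Cauchy-uniqueness argument are all routine once the wave-operator estimates of Appendix \ref{appendix:wave_eq} are available.
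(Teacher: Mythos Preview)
Your proposal is correct and follows essentially the same route as the paper: a progressing wave expansion $e^{\psi}\delta(t-x\cdot\omega)+\sum_{j=0}^{N}a_j(x)(t-x\cdot\omega)_+^j+R_N$ with the $a_j$ determined by transport ODEs along $\omega$, the remainder $R_N$ obtained from the forward Cauchy problem for $\partial_t^2+L_{\W,V}$ with source $-(L_{\W,V}a_N)(t-x\cdot\omega)_+^N$ (via the wave-equation existence/regularity results quoted from H\"ormander), and uniqueness from distributional Cauchy uniqueness for the wave operator. Your identification of the regularity bookkeeping as the crux is also on target: the paper makes this explicit by setting $\beta=\min\{\M-2N-2,\,N-1\}$, choosing $N$ so that Sobolev embedding gives $R_N\in C^2(\R^{n+1})$, and this is exactly what fixes $\M$ in \eqref{eq:m}.
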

\noindent One of the consequences of the formula \eqref{eq:U_delta} is that the restriction of $U_\delta(x,t;\omega)$ to the surface $\Sigma$ is well defined. This essentially follows from the fact that the wave front set of the distribution $\delta(t-x\cdot \omega)$ is disjoint from the normal bundle of $\Sigma$.
 We emphasize  that $U_\delta$ satisfies the initial condition $ U_\delta|_{\left\{t<-1\right\}}= \delta (t-x\cdot \omega)$, even if it does not look that way at a first glance. This is due to the fact that when $t<-1$ the distribution $\delta (t-x\cdot \omega)$ is supported in $\{ x \cdot \omega  < -1\}$, a region in which $\psi$ vanishes. Therefore
\[ e^{\psi(x)}\delta (t-x\cdot \omega) =  \delta (t-x\cdot \omega) \quad \text{for} \; \, t<-1. \]
As mentioned previously, for more details about the proofs of Propositions \ref{prop:sol_smooth_1} and \ref{prop:sol_smooth_2} see Section  \ref{subsec:existence}. We remark that the condition \eqref{eq:m} on the regularity of the coefficients appears in the proofs of these propositions in order to have $C^2$ solutions $u$ and $v$ of \eqref{eq:smooth_1} and \eqref{eq:smooth_2}.

An important fact later on is that  the solutions   of \eqref{eq:wave_2} and \eqref{eq:wave_1} satisfy that $\partial_t U_H= U_\delta$. This  is consequence of the independence of $V$ and $\W$ from $t$ together with the uniqueness of solutions for both equations. Of particular relevance for the scattering problem  will be  that this equivalence also holds for the boundary data: knowledge of $U_H|_{\Sigma_+}$ gives $U_\delta|_{\Sigma_+}$ and vice versa. 

\begin{Proposition} \sl  \label{prop:H_delta_equivalence}
 Let $\W \in C^{\M+2}_c(\R^n; \C^n)$ and $V \in  C^{\M}_c(\R^n; \C)$, and let  $\omega \in S^{n-1}$ be fixed. If $U_\delta$ and $U_H$ are, respectively, the unique  distributional solutions of \eqref{eq:wave_2} and \eqref{eq:wave_1}, then one has that  $\partial_t U_H = U_\delta$ in the sense of distributions. 
In fact,  it also holds that $v(x,t) = \partial_t u(x,t)$, so that \eqref{eq:U_delta} can be written as
 \begin{equation} \label{eq:U_delta_2}
 U_\delta(x,t;\omega) =  (\partial_t u(x,t))  H(t-x \cdot \omega) + u(x,t) \delta(t-x \cdot \omega).
 \end{equation}
Specifically, if $\psi$ is given by \eqref{eq:psi1} we have that
\begin{equation} \label{eq:u_v}
u (x,t) =  \int_{x \cdot \omega}^{t} v(x,\tau ) \, d\tau + e^{\psi(x)}, \quad \text{for} \;\, t \ge x \cdot \omega.
\end{equation}
\end{Proposition}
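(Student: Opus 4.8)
The plan is to exploit that the operator $L_{\W,V}$ has coefficients independent of $t$, so that $\partial_t$ commutes with $\partial_t^2 + L_{\W,V}$, and then to invoke the uniqueness statements in Propositions~\ref{prop:sol_smooth_1} and \ref{prop:sol_smooth_2}. First I would observe that $\partial_t U_H$ is a distribution on $\R^{n+1}$ satisfying
\[
(\partial_t^2 + L_{\W,V})(\partial_t U_H) = \partial_t\big[(\partial_t^2 + L_{\W,V})U_H\big] = 0,
\]
and that on the open set $\{t<-1\}$ one has $\partial_t U_H = \partial_t H(t-x\cdot\omega) = \delta(t-x\cdot\omega)$, since $U_H|_{\{t<-1\}} = H(t-x\cdot\omega)$ as distributions. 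Hence $\partial_t U_H$ solves exactly the initial value problem \eqref{eq:wave_2} that characterizes $U_\delta$, and the uniqueness part of Proposition~\ref{prop:sol_smooth_2} gives $\partial_t U_H = U_\delta$ in the sense of distributions.

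Next I would compute $\partial_t U_H$ directly from the explicit form $U_H = u(x,t)\,H(t-x\cdot\omega)$ provided by Proposition~\ref{prop:sol_smooth_1}. Applying the Leibniz rule for the product of the $C^2$ function $u$ (on $\{t\ge x\cdot\omega\}$) with the Heaviside factor, using $\partial_t H(t-x\cdot\omega) = \delta(t-x\cdot\omega)$ together with the identity $u(x,t)\,\delta(t-x\cdot\omega) = u(x,x\cdot\omega)\,\delta(t-x\cdot\omega) = e^{\psi(x)}\delta(t-x\cdot\omega)$ (the last equality coming from the boundary value in \eqref{eq:smooth_1}), one obtains
\[
\partial_t U_H = (\partial_t u)\,H(t-x\cdot\omega) + e^{\psi(x)}\,\delta(t-x\cdot\omega).
\]
Comparing with the representation \eqref{eq:U_delta} of $U_\delta$ and using the first step, the $\delta$-parts coincide and one is left with $(\partial_t u - v)\,H(t-x\cdot\omega) = 0$ as a distribution; since $\partial_t u - v$ is continuous on $\{t\ge x\cdot\omega\}$, this forces $v = \partial_t u$ there, which is \eqref{eq:U_delta_2}. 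Finally, \eqref{eq:u_v} follows by integrating $\partial_t u(x,\tau) = v(x,\tau)$ in $\tau$ from $x\cdot\omega$ to $t$ and using the initial value $u(x,x\cdot\omega) = e^{\psi(x)}$ from \eqref{eq:smooth_1}.

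I do not expect a serious obstacle here; the points requiring care are all routine distributional manipulations. One must check that $\partial_t$ genuinely commutes with restriction to the open set $\{t<-1\}$ and with $L_{\W,V}$ — immediate because $\W$ and $V$ do not depend on $t$ — and that the Leibniz rule applies to $u\cdot H(t-x\cdot\omega)$ even though $u$ is only defined and $C^2$ on $\{t\ge x\cdot\omega\}$, which is harmless since the product only involves $u$ on that set. The one conceptual ingredient making the argument rigorous rather than merely formal is the uniqueness of distributional solutions asserted in Propositions~\ref{prop:sol_smooth_1} and \ref{prop:sol_smooth_2}.
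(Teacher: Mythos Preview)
Your proposal is correct and follows essentially the same approach as the paper: differentiate \eqref{eq:wave_1} in $t$ using that $\W,V$ are time-independent, invoke the uniqueness in Proposition~\ref{prop:sol_smooth_2} to identify $\partial_t U_H$ with $U_\delta$, compute $\partial_t U_H$ explicitly from $U_H = u\,H(t-x\cdot\omega)$, and read off $v=\partial_t u$ and then \eqref{eq:u_v} by the fundamental theorem of calculus. Your write-up even makes a few of the distributional steps (matching the $\delta$-parts, justifying the Leibniz rule) slightly more explicit than the paper does.
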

\noindent Notice that  the previous identity holds  in particular   for every $x \in \partial B$, so  we can write that
\begin{equation} \label{eq:u_v_boundary}
u|_{\Sigma_+}(x,t) =  \int_{x \cdot \omega}^{t} v|_{\Sigma_+}(x,\tau ) \, d\tau + e^{\psi(x)}, \quad \text{for} \;\, (x,t) \in \Sigma_+.
\end{equation}

\begin{proof}[Proof of Proposition \ref{prop:H_delta_equivalence}]
Since $\W$ and $V$ are independent of $t$, we can take a time derivative of both sides of \eqref{eq:wave_1}. This implies that $\partial_t U_H$ satisfies
\begin{equation*}
(\partial_t^2 + L_{\W,V}) \partial_t U_H =0 \; \text{in}\; \R^{n+1}, \quad \partial_t U_H|_{\left\{t<-1\right\}}= \delta (t-x\cdot \omega).
\end{equation*}
Computing explicitly $\partial_t U_H$, we get
 \begin{equation*}
  \partial_t U_H (x,t;\omega) =  (\partial_t u(x,t))  H(t-x \cdot \omega) + u(x,t) \delta(t-x \cdot \omega).
 \end{equation*}
Proposition \ref{prop:sol_smooth_2} implies there is a unique distributional solution of \eqref{eq:wave_1}, and hence $\partial_t U_H = U_\delta$. Then $\partial_t u = v$. We also get that $u(x,x\cdot \omega) = e^{\psi}$, but we already knew this from Proposition \ref{prop:sol_smooth_1}. Therefore \eqref{eq:U_delta_2} holds true. Identity \eqref{eq:u_v} follows directly  by the fundamental theorem of calculus. 
\end{proof}

As an immediate  consequence of identity \eqref{eq:u_v_boundary} we get the following lemma.

\begin{Lemma} \sl 
\label{lemma:equivalence_data}
  Let $\W_1,\W_2 \in C^{\M+2}_c(\R^n; \C^n)$  and  $V_1,V_2\in  C^{\M}_c(\R^n; \C)$.  For $k=1,2$ and $\omega \in S^{n-1}$,
consider the solutions  $U_{H,k}$ and $U_{\delta,k}$   of
  \begin{equation*}
( \square  +  2\W_k \cdot \nabla + V_k) U_{H,k} = 0, \; \, \text{in}\; \R^{n+1}, \quad U_{H,k}|_{\left\{t<-1\right\}}= H (t-  x \cdot \omega),
\end{equation*}
and
  \begin{equation*}
( \square  +  2\W_k \cdot \nabla + V_k) U_{\delta,k} = 0, \; \, \text{in}\; \R^{n+1}, \quad U_{\delta,k}|_{\left\{t<-1\right\}}= \delta (t-  x \cdot \omega).
\end{equation*}
Then $U_{H,1} = U_{H,2}$ in $\Sigma \cap \{t \ge  x\cdot \omega\}$ if and only if $U_{\delta,1}  = U_{\delta,2}$ in $\Sigma \cap \{ t \ge  x\cdot \omega \}$.
\end{Lemma}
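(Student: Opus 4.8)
The plan is to read the statement off directly from Proposition~\ref{prop:H_delta_equivalence}, applied separately to the two coefficient pairs $(\W_1,V_1)$ and $(\W_2,V_2)$. This gives, for $k=1,2$, a phase $\psi_k$ (namely \eqref{eq:psi1} with $\W=\W_k$) and $C^2$ functions $u_k,v_k$ on $\{t\ge x\cdot\omega\}$ with $v_k=\partial_t u_k$ there, such that $U_{H,k}=u_k(x,t)H(t-x\cdot\omega)$, $U_{\delta,k}=v_k(x,t)H(t-x\cdot\omega)+e^{\psi_k(x)}\delta(t-x\cdot\omega)$, and, on $\Sigma_+$, the boundary identity \eqref{eq:u_v_boundary}, i.e.
\[
u_k|_{\Sigma_+}(x,t)=\int_{x\cdot\omega}^{t}v_k|_{\Sigma_+}(x,\tau)\,d\tau+e^{\psi_k(x)}.
\]
All objects $u_k,v_k,\psi_k$ are determined by $(\W_k,V_k)$ through Propositions~\ref{prop:sol_smooth_1} and \ref{prop:sol_smooth_2}.

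First I would rephrase the two hypotheses in terms of these objects. Since $U_{H,k}$ is the locally integrable function $u_kH(t-x\cdot\omega)$ and $\Sigma\cap\{t\ge x\cdot\omega\}$ differs from the open set $\Sigma_+$ only by the $\Sigma$-null diagonal $\{t=x\cdot\omega\}$, the condition $U_{H,1}=U_{H,2}$ on $\Sigma\cap\{t\ge x\cdot\omega\}$ is the same as $u_1|_{\Sigma_+}=u_2|_{\Sigma_+}$ (both sides are continuous on $\Sigma_+$, being restrictions of $C^2$ functions). For $U_{\delta,k}$, the restriction to $\Sigma$ is well defined and supported in $\Sigma\cap\{t\ge x\cdot\omega\}$, and it is the sum of a bounded function (equal to $v_k$ on $\Sigma_+$ and to $0$ where $t<x\cdot\omega$) and the surface measure $e^{\psi_k}\delta(t-x\cdot\omega)$ carried by the submanifold $\{t=x\cdot\omega\}$ of $\Sigma$; only the second summand is singular along that submanifold, so the two pieces are uniquely recoverable from $U_{\delta,k}|_\Sigma$. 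Hence $U_{\delta,1}=U_{\delta,2}$ on $\Sigma\cap\{t\ge x\cdot\omega\}$ is equivalent to the conjunction $v_1|_{\Sigma_+}=v_2|_{\Sigma_+}$ together with $e^{\psi_1}=e^{\psi_2}$ on $\partial B$.

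With this in hand the equivalence is immediate from the displayed identity. If $v_1|_{\Sigma_+}=v_2|_{\Sigma_+}$ and $e^{\psi_1}=e^{\psi_2}$ on $\partial B$, then the right-hand side of the identity is independent of $k$, so $u_1|_{\Sigma_+}=u_2|_{\Sigma_+}$, that is, the $U_H$-data agree. Conversely, assume $u_1|_{\Sigma_+}=u_2|_{\Sigma_+}$. Differentiating in $t$, which is tangential to $\Sigma_+$ and sends $u_k$ to $v_k$, gives $v_1|_{\Sigma_+}=v_2|_{\Sigma_+}$; and letting $t\downarrow x\cdot\omega$ in the identity, where the integral term vanishes and $u_k$ extends continuously with boundary value $e^{\psi_k(x)}$, gives $e^{\psi_1(x)}=e^{\psi_2(x)}$ for every $x\in\partial B$. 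Thus the $U_\delta$-data agree.

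There is no genuine difficulty here; the only points needing a word of justification are the meaning of ``equality on $\Sigma\cap\{t\ge x\cdot\omega\}$'' for the distributions $U_{\delta,k}$, handled above by separating regular and singular parts, and the fact that $x\cdot\omega\in[-1,1]$ for $x\in\partial B$, so everything is controlled up to the diagonal. Alternatively, one can bypass the splitting and argue purely from $\partial_t U_{H,k}=U_{\delta,k}$: restriction to $\Sigma$ commutes with $\partial_t$ (tangential to $\Sigma$), and $U_{H,k}|_\Sigma$ vanishes for $t<-1$ since $x\cdot\omega\ge-1$ on $\partial B$, so $U_{H,k}|_\Sigma$ is the unique $t$-primitive of $U_{\delta,k}|_\Sigma$ vanishing there, which yields both implications at once.
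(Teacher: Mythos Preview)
Your proof is correct and follows exactly the route the paper indicates: the lemma is stated there as an immediate consequence of the boundary identity \eqref{eq:u_v_boundary}, and you unpack this by separating the regular and singular pieces of $U_{\delta,k}|_\Sigma$ and using the identity in both directions. The alternative argument you sketch at the end, via $\partial_t U_{H,k}=U_{\delta,k}$ and the tangentiality of $\partial_t$ to $\Sigma$, is also in the spirit of Proposition~\ref{prop:H_delta_equivalence} and is perhaps the cleanest way to phrase it.
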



\subsection*{Energy estimates}
To finish this section we  state  three different  estimates related to the  wave operator that will be useful later on.   They are analogues of the estimates given in   \cite[Lemmas 3.3-3.5]{RakeshSalo1}, modified in order to account for the presence of a first order perturbation not considered in the mentioned paper. For completeness we have included the proofs in Section \ref{subsec:energy}.
The first two lemmas will be used to control certain boundary terms appearing in the Carleman estimate. We denote by $\nabla_\Gamma \alpha$ the component of $\nabla \alpha $ tangential to $\Gamma$.

\begin{Lemma} \sl \label{lemma:energy_T}
Let $T>1$. Let $\W  \in L^\infty(B,\C^n) $ and $V\in L^\infty(B,\C)$. Then, the estimate
\begin{multline*}
\norm{\alpha}_{L^2(\Gamma_T)} + \norm{\nabla_{x,t}\alpha}_{L^2(\Gamma_T)} \\
\lesssim   \norm{\alpha}_{H^1(\Gamma)}  + \norm{(\square + 2\W \cdot \nabla + V)\alpha}_{L^2(Q_+)} +\norm{\alpha}_{H^1(\Sigma_+)} + \norm{\p_\nu \alpha}_{L^2(\Sigma_+)} ,
\end{multline*}
holds  true \footnote{Throughout the paper we write $a \lesssim b$ or equivalently $b \gtrsim a$, when $a$ and $b$ are positive constants and there exists $C > 0$ so that $a \leq C b$. We refer to $C$ as the implicit constant in the estimate.} for every $\alpha\in C^{\infty}(\overline{Q}_+)$. The implicit constant depends  on $\norm{\W }_{L^\infty(B)}$, $\norm{V}_{L^\infty(B)}$  and $T$.
\end{Lemma}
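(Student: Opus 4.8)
The plan is to prove this by the energy (multiplier) method for the wave operator, using $\p_t$ as multiplier, integrating over the family of truncated regions $Q_+^\tau := Q_+ \cap \{t<\tau\}$, $\tau\in(-1,T]$, and then closing the estimate with a Gronwall argument. Writing $f := (\square + 2\W\cdot\nabla + V)\alpha$ with $\square := \p_t^2 - \Delta$, so that $\square\alpha = f - 2\W\cdot\nabla\alpha - V\alpha$, and multiplying $\square\alpha$ by $2\,\overline{\p_t\alpha}$ and taking real parts, one gets the pointwise identity
\[
2\Re\big(\overline{\p_t\alpha}\,\square\alpha\big) = \mathrm{div}_{x,t}\, P, \qquad P := \big(\abs{\p_t\alpha}^2 + \abs{\nabla\alpha}^2,\ -2\Re(\overline{\p_t\alpha}\,\nabla\alpha)\big),
\]
whose first slot is the $t$-component. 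For each $\tau$ I would integrate this over $Q_+^\tau$ and apply the divergence theorem, which is legitimate because $\alpha\in C^\infty(\overline{Q}_+)$ and $Q_+^\tau$ is a Lipschitz domain. The boundary of $Q_+^\tau$ decomposes into the flat spatial cap $D_\tau := \{x\in B : x\cdot\omega<\tau\}$ sitting at height $t=\tau$, the lateral part $\Sigma_+\cap\{t<\tau\}$, and the characteristic part $\Gamma\cap\{t<\tau\}$.

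I would then estimate the three boundary contributions. On $D_\tau$ the outward normal is $(0,1)$, so the flux equals $\abs{\p_t\alpha}^2 + \abs{\nabla\alpha}^2$, precisely the quantity we want on the left. On $\Sigma_+$ the normal is $(\nu,0)$ with $\nu$ the exterior unit normal of $\partial B$, so the flux is $-2\Re(\overline{\p_t\alpha}\,\p_\nu\alpha)$, bounded pointwise by $\abs{\p_t\alpha}^2 + \abs{\p_\nu\alpha}^2$ and hence, after integration, by $\norm{\alpha}_{H^1(\Sigma_+)}^2 + \norm{\p_\nu\alpha}_{L^2(\Sigma_+)}^2$. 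The delicate term is the one on $\Gamma = \{t = x\cdot\omega\}$: this surface is characteristic for $\square$, since its conormal $(-\omega,1)$ annihilates the principal symbol $\abs{\xi}^2 - \tau^2$, and a direct computation using the outward unit normal $\tfrac{1}{\sqrt2}(\omega,-1)$ shows that, in the coordinates $x=(y,z)$ with $z=x\cdot\omega$, the flux on $\Gamma$ equals $-\tfrac{1}{\sqrt2}\big(\abs{(\p_t+\p_z)\alpha}^2 + \abs{\nabla_y\alpha}^2\big)$. Since $\p_t+\p_z$ and $\nabla_y$ together span the tangent space of $\Gamma$, this term involves \emph{only} the tangential gradient $\nabla_\Gamma\alpha$, and is therefore controlled by $\norm{\alpha}_{H^1(\Gamma)}^2$. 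I expect this cancellation of the normal derivative on the characteristic boundary to be the one genuinely non-mechanical point of the proof: without it the flux on $\Gamma$ would contain uncontrollable transversal derivatives of $\alpha$ and the estimate would fail.

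It remains to handle the interior term and run the Gronwall scheme. For the bulk I would substitute $\square\alpha = f - 2\W\cdot\nabla\alpha - V\alpha$ and use Cauchy--Schwarz together with $\norm{\W}_{L^\infty(B)},\norm{V}_{L^\infty(B)}<\infty$ to bound $\big|\int_{Q_+^\tau} 2\Re(\overline{\p_t\alpha}\,\square\alpha)\big|$ by $\norm{f}_{L^2(Q_+)}^2 + C\int_{Q_+^\tau}(\abs{\nabla_{x,t}\alpha}^2 + \abs{\alpha}^2)$. Adding the elementary estimate $\int_{D_\tau}\abs{\alpha}^2 \lesssim \norm{\alpha}_{L^2(\Gamma)}^2 + \int_{Q_+^\tau}\abs{\p_t\alpha}^2$, obtained from the fundamental theorem of calculus along the vertical segments issuing from $\Gamma$, and writing $\mathcal E(\tau) := \int_{D_\tau}(\abs{\nabla_{x,t}\alpha}^2 + \abs{\alpha}^2)\,dx$ and $\mathcal D := \norm{\alpha}_{H^1(\Gamma)}^2 + \norm{f}_{L^2(Q_+)}^2 + \norm{\alpha}_{H^1(\Sigma_+)}^2 + \norm{\p_\nu\alpha}_{L^2(\Sigma_+)}^2$, the combined inequality together with $\int_{Q_+^\tau}(\,\cdot\,)\,dx\,dt = \int_{-1}^\tau \mathcal E(s)\,ds$ takes the form $\mathcal E(\tau)\le C\mathcal D + C\int_{-1}^\tau \mathcal E(s)\,ds$. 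Gronwall's inequality then gives $\mathcal E(\tau)\le C\,e^{C(\tau+1)}\mathcal D$ for all $\tau\le T$; specializing to $\tau=T$, where $D_T = B$ so that $\mathcal E(T) = \norm{\nabla_{x,t}\alpha}_{L^2(\Gamma_T)}^2 + \norm{\alpha}_{L^2(\Gamma_T)}^2$, and taking square roots, one arrives at the claimed estimate with implicit constant depending only on $\norm{\W}_{L^\infty(B)}$, $\norm{V}_{L^\infty(B)}$ and $T$.
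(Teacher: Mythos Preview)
Your proposal is correct and follows essentially the same route as the paper: the $\partial_t$-multiplier identity, the divergence theorem on $Q_+\cap\{t<\tau\}$, the crucial observation that the flux on the characteristic surface $\Gamma$ reduces to $-\tfrac{1}{\sqrt{2}}\big(|(\partial_t+\partial_z)\alpha|^2+|\nabla_y\alpha|^2\big)$ and hence involves only tangential derivatives, and then Gronwall. The one cosmetic difference is that the paper builds the zeroth-order term into the multiplier from the start, using $2\alpha_t(\square\alpha+\alpha)=\mathrm{div}_{x,t}\big(-2\alpha_t\nabla\alpha,\ \alpha_t^2+|\nabla\alpha|^2+\alpha^2\big)$, so that $|\alpha|^2$ appears directly in the energy $E(\tau)$; you instead recover $\int_{D_\tau}|\alpha|^2$ separately via the fundamental theorem of calculus along vertical segments from $\Gamma$, which is an equally valid and equally short alternative.
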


\begin{Lemma} \sl \label{lemma:energy_estimate_gamma}
Let $T>1$. Let $\W  \in L^\infty(B,\C^n) $, $V\in L^\infty(B,\C)$ and $\phi \in C^2(\overline{Q}_+)$.  Then,  there is a constant $\sigma_0>0$ such that the following estimate
\begin{equation}\label{est:top}
\begin{aligned}
&\sigma^2 \norm{e^{\sigma\phi}\alpha}_{L^2(\Gamma)}^2 + \norm{e^{\sigma \phi}\nabla_{\Gamma}\alpha}_{L^2(\Gamma)}^2\lesssim    \sigma^3 \norm{e^{\sigma\phi}\alpha}_{L^2(Q_+)}^2 +  \sigma \norm{e^{\sigma\phi}\nabla_{x,t}\alpha}_{L^2(Q_+)}^2    \\
&\qquad \qquad \qquad  +  \norm{e^{\sigma\phi}(\square + 2\W \cdot \nabla + V)\alpha}_{L^2(Q_+)}^2 +  \sigma^2 \norm{e^{\sigma\phi}\alpha}_{L^2(\Sigma_+)}^2+  \norm{e^{\sigma\phi} \nabla_{x,t}\alpha}_{L^2(\Sigma_+)}^2 ,
\end{aligned}
\end{equation}
holds  for every $\alpha\in C^\infty(\overline{Q}_+)$ and for every $\sigma\geq \sigma_0$. The implicit constant  depends on    $\left\| \phi\right\|_{C^2(\overline{Q}_+)}$, $\norm{\W }_{L^\infty(B)}$, $\norm{V}_{L^\infty(B)}$   and $T$.
\end{Lemma}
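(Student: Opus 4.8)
The plan is to obtain the estimate by integrating two explicit weighted divergence identities for the wave operator over $Q_+$ and carefully tracking the powers of $\sigma$. First I would reduce to the case $\W=0,\ V=0$: since $\W,V\in L^\infty(B)$,
\[
\norm{e^{\sigma\phi}\square\alpha}_{L^2(Q_+)}^2\lesssim \norm{e^{\sigma\phi}(\square+2\W\cdot\nabla+V)\alpha}_{L^2(Q_+)}^2+\norm{e^{\sigma\phi}\nabla_{x,t}\alpha}_{L^2(Q_+)}^2+\norm{e^{\sigma\phi}\alpha}_{L^2(Q_+)}^2 ,
\]
and for $\sigma\ge 1$ the last two terms are dominated by $\sigma\norm{e^{\sigma\phi}\nabla_{x,t}\alpha}_{L^2(Q_+)}^2+\sigma^3\norm{e^{\sigma\phi}\alpha}_{L^2(Q_+)}^2$; hence it suffices to prove \eqref{est:top} with $\square$ in place of $\square+2\W\cdot\nabla+V$. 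The geometric fact I would exploit is that $\Gamma=\{t=x\cdot\omega\}$ is a \emph{characteristic} hypersurface for $\square$, so the ``energy flux'' of a solution through $\Gamma$ involves only the tangential gradient $\nabla_\Gamma\alpha$ --- precisely the quantity appearing on the left of \eqref{est:top}. The main obstacle is that $Q_+$ is also bounded by the top face $\Gamma_T=\ol{Q}\cap\{t=T\}$, on which any divergence identity integrated over $Q_+$ produces a boundary term that is \emph{not} among those allowed on the right of \eqref{est:top}. I would kill it by inserting, besides $e^{2\sigma\phi}$, the cutoff $\chi(t):=T-t$: it vanishes on $\Gamma_T$, while on $\Gamma$ one has $t=x\cdot\omega\in[-1,1]$, so $\chi\ge T-1>0$ there because $T>1$.

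\emph{The gradient contribution.} Write $z=x\cdot\omega$ and split $\nabla_x=\omega\,\partial_z+\nabla_y$, with $\nabla_y\perp\omega$. From the pointwise identity $\Re(\ol{\partial_t\alpha}\,\square\alpha)=\mathrm{div}_{x,t}J$, where $J=\bigl(-\Re(\ol{\partial_t\alpha}\nabla_x\alpha),\ \tfrac12(\abs{\partial_t\alpha}^2+\abs{\nabla_x\alpha}^2)\bigr)$, I would multiply by $\chi e^{2\sigma\phi}$, use $\chi e^{2\sigma\phi}\,\mathrm{div}_{x,t}J=\mathrm{div}_{x,t}(\chi e^{2\sigma\phi}J)-\nabla_{x,t}(\chi e^{2\sigma\phi})\cdot J$, and integrate over $Q_+$. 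The $\Gamma_T$ term is gone; on $\Gamma$ the outward conormal is $(\omega,-1)/\sqrt{2}$ and $J$ contracted with it equals $-\tfrac{1}{2\sqrt{2}}\bigl(\abs{(\partial_t+\partial_z)\alpha}^2+\abs{\nabla_y\alpha}^2\bigr)$, which is comparable to $\abs{\nabla_\Gamma\alpha}^2$ since $\partial_t+\partial_z$ and $\nabla_y$ span the tangent space of $\Gamma$; the $\Sigma_+$ term is $\lesssim\norm{e^{\sigma\phi}\nabla_{x,t}\alpha}_{L^2(\Sigma_+)}^2$; and the interior terms $\nabla_{x,t}(\chi e^{2\sigma\phi})\cdot J$ and $\chi e^{2\sigma\phi}\Re(\ol{\partial_t\alpha}\,\square\alpha)$ are bounded, after Cauchy--Schwarz and Young, by $\sigma\norm{e^{\sigma\phi}\nabla_{x,t}\alpha}_{L^2(Q_+)}^2+\norm{e^{\sigma\phi}\square\alpha}_{L^2(Q_+)}^2$ (the single power of $\sigma$ coming from the derivative hitting $e^{2\sigma\phi}$; the $\chi'$ piece is $O(1)$). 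Using $\chi\ge T-1$ on $\Gamma$ on the coercive side, this gives
\[
\norm{e^{\sigma\phi}\nabla_\Gamma\alpha}_{L^2(\Gamma)}^2\lesssim \sigma\norm{e^{\sigma\phi}\nabla_{x,t}\alpha}_{L^2(Q_+)}^2+\norm{e^{\sigma\phi}\square\alpha}_{L^2(Q_+)}^2+\norm{e^{\sigma\phi}\nabla_{x,t}\alpha}_{L^2(\Sigma_+)}^2 .
\]

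\emph{The zeroth order contribution.} For the $\sigma^2\norm{e^{\sigma\phi}\alpha}_{L^2(\Gamma)}^2$ term I would integrate over $Q_+$ the identity $\partial_t(\chi e^{2\sigma\phi}\abs{\alpha}^2)=\chi'e^{2\sigma\phi}\abs{\alpha}^2+2\sigma\chi(\partial_t\phi)e^{2\sigma\phi}\abs{\alpha}^2+2\chi e^{2\sigma\phi}\Re(\ol{\alpha}\,\partial_t\alpha)$; again only $\Gamma$ contributes on the boundary (the $t$-component of its conormal being $-1/\sqrt{2}$), yielding $\tfrac{1}{\sqrt{2}}\int_\Gamma\chi e^{2\sigma\phi}\abs{\alpha}^2$. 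Multiplying by $\sigma^2$, using $\chi\ge T-1$ on $\Gamma$ on the left, and the Young inequality $\sigma^2\abs{\alpha}\abs{\partial_t\alpha}\le\tfrac12\sigma^3\abs{\alpha}^2+\tfrac12\sigma\abs{\partial_t\alpha}^2$ on the right, every term on the right lands in $\sigma^3\norm{e^{\sigma\phi}\alpha}_{L^2(Q_+)}^2+\sigma\norm{e^{\sigma\phi}\nabla_{x,t}\alpha}_{L^2(Q_+)}^2$, whence
\[
\sigma^2\norm{e^{\sigma\phi}\alpha}_{L^2(\Gamma)}^2\lesssim \sigma^3\norm{e^{\sigma\phi}\alpha}_{L^2(Q_+)}^2+\sigma\norm{e^{\sigma\phi}\nabla_{x,t}\alpha}_{L^2(Q_+)}^2 .
\]

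Adding the last two displays and undoing the reduction to $\W=V=0$ yields \eqref{est:top} for every $\sigma\ge\sigma_0:=1$, with implicit constant depending only on $\norm{\phi}_{C^2(\ol{Q}_+)}$, $\norm{\W}_{L^\infty(B)}$, $\norm{V}_{L^\infty(B)}$ and $T$. The only genuinely delicate point I anticipate is controlling the top face $\Gamma_T$ --- resolved by the cutoff $\chi=T-t$ together with the hypothesis $T>1$, which keeps $\chi$ positive on $\Gamma$; everything else is routine bookkeeping of powers of $\sigma$ in the weighted integrations by parts. One could equivalently keep the first-order operator throughout the identities, the extra interior terms $\chi e^{2\sigma\phi}\Re(\ol{\partial_t\alpha}(2\W\cdot\nabla+V)\alpha)$ being absorbed into $\norm{e^{\sigma\phi}\nabla_{x,t}\alpha}_{L^2(Q_+)}^2+\norm{e^{\sigma\phi}\alpha}_{L^2(Q_+)}^2$.
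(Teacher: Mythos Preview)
Your proof is correct. The approach differs from the paper's in three respects, all of which are matters of taste rather than substance.

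First, the paper conjugates at the outset, setting $v=e^{\sigma\phi}\alpha$ and working with unweighted energies of $v$; you keep the weight $e^{2\sigma\phi}$ explicit throughout. Second, the paper uses a single multiplier identity, namely $2v_t(\square v+\sigma^2 v)=\mathrm{div}_{x,t}(-2v_t\nabla v,\,v_t^2+\abs{\nabla v}^2+\sigma^2 v^2)$, which packages the zeroth-order and gradient contributions together; you split these into two separate identities. Third---and this is the main structural difference---the paper disposes of the top face $\Gamma_T$ by integrating the resulting inequality $J\lesssim E(\tau)+\ldots$ over $\tau\in[1,T]$ (using that the $\Gamma$-energy $J$ is independent of $\tau$, so the slice energy $E(\tau)$ is replaced by its time average, which is a $Q_+$ integral), whereas you kill $\Gamma_T$ directly with the cutoff $\chi(t)=T-t$, relying on $T>1$ to keep $\chi$ bounded below on $\Gamma$. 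Your cutoff argument is arguably cleaner and avoids the extra integration in $\tau$; the paper's conjugation keeps the $\sigma$-bookkeeping slightly more compact by hiding the weight inside $v$. Both routes are standard energy-method variants and yield the same estimate with the same dependence of constants.
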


The last lemma will be used to show that the normal derivative of a function $\alpha$ that vanishes at the set $\Sigma_{+}$ will also vanish provided that  certain   conditions are met. Here $\nu$ denotes the unit vector field normal to $\Sigma_+$.

\begin{Lemma} \sl \label{lemma:normal_estimate}
Let $T>1$. Let $\alpha (y,z,t)$ be a $C^2$ function on $\left\{ t\geq z\right\}$ satisfying
\begin{equation}\label{eq:alpha}
\begin{alignedat}{2}
\square\,  \alpha &=0 & \qquad &\text{in}\;\; \left\{ (y,z,t): |(y,z)|>1, \, t>z  \right\}, \\ 
(\partial_t +\partial_z) \alpha (y, z,z) &= 0 & \qquad &\text{in} \;\; \left\{ |(y,z)|>1, \, t=z\right\},\\
\alpha(y, z, t)&=0 & \qquad  &\text{in} \;\; \left\{ z < t<-1 \right\}.
\end{alignedat}
\end{equation}
Assume that on the region $\abs{(y,z)}\geq 1$ we have  
 \begin{equation}\label{eq:alpha_t=z}
\alpha (y,z,z)= \begin{cases}
0,  \hspace{7.5mm} \text{if} \quad \abs{y}\geq 1,  \\
0,  \hspace{7.5mm} \text{if} \quad \abs{y}\leq 1, \; \text{and}\; \; z \le - \sqrt{1-|y|^2},\\
\beta(y), \hspace{2.5mm} \text{if} \quad \abs{y}\leq 1, \; \text{and}\; \; z \ge \sqrt{1-|y|^2},
\end{cases}
\end{equation}
for some $\beta\in C^2_c(\R^{n-1})$ compactly supported on $\abs{y}\leq 1-\varepsilon$, where $\varepsilon\in (0,1)$. Then
\[
\norm{\p_\nu (\chi \alpha)}_{L^2(\Sigma_+)} \lesssim \varepsilon^{-1/2} \norm{\chi}_{C^1} \left(\norm{ \alpha}_{H^1(\Sigma_+)} + \norm{\alpha}_{H^1(\Sigma_+\cap \Gamma)}  \right),
\] 
for any function  $\chi \in C^1(\overline{Q}_+)$. The implicit constant depends  on $T$.
\end{Lemma}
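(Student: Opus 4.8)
The plan is to derive the estimate from a Rellich-type energy identity for $\alpha$ in the region \emph{exterior} to the cylinder, where, by \eqref{eq:alpha}, $\alpha$ solves the free wave equation $\square\alpha=0$, and where the characteristic face $\{t=z\}$ carries the prescribed trace \eqref{eq:alpha_t=z} together with the transport relation $(\p_t+\p_z)\alpha=0$. First I would localize the argument to a bounded region $K$ abutting $\partial B$: since $\alpha$ vanishes for $t<-1$ and, by finite speed of propagation, the traces of $\alpha$ and of $\p_\nu\alpha$ on $\Sigma_+$ depend only on $\alpha$ on such a $K$, one may arrange that $\partial K$ consists of a portion of $\Sigma_+$, a portion of $\Gamma$, and characteristic (null) pieces contributing no boundary integral.

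The core step is the multiplier identity for $(\square\alpha,\,M\alpha)$ on $K$ with $M\alpha=\lambda\,\p_t\alpha+\chi^2\,a\cdot\nabla_x\alpha$, where $a=\rho(\abs x)x$ is a smooth radial field equal to $x$ near $\partial B$ and supported in a neighbourhood of the cylinder, $\chi$ is extended to a $C^1$ neighbourhood, and $\lambda$ is a large constant. Integrating $\int_K(M\alpha)\,\square\alpha=0$ by parts, the $\Sigma_+$ boundary integral produces, using $a\cdot\nu=1$ there, a term $\gtrsim\norm{\chi\,\p_\nu\alpha}_{L^2(\Sigma_+)}^2$ modulo (a) tangential derivatives bounded by $\norm{\chi}_{C^1}^2\norm{\alpha}_{H^1(\Sigma_+)}^2$ and (b) a cross term $\sim\lambda\,\p_t\alpha\,\p_\nu\alpha$ handled by Young's inequality; any spacelike cap contributes, for $\lambda$ large relative to $\norm a_{L^\infty}$, a nonnegative quantity that is simply discarded; the null pieces contribute nothing; and the $\Gamma$ integral is a quadratic form along a null normal. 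On that face the transport relation gives $\p_t\alpha=-\p_z\alpha$, which collapses the form to tangential $y$-derivatives of the characteristic trace only, i.e.\ to $\norm{\nabla_y\beta}_{L^2(\{\abs y\le1-\eps\})}^2$ by \eqref{eq:alpha_t=z}. (That this face "sees" only $\beta$, and not the zeroth order term, is the mechanism that eventually lets one work with $n$ rather than $2n$ directions; compare Propositions \ref{prop:sol_smooth_1}--\ref{prop:sol_smooth_2}.)

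There remain two bookkeeping points. First, converting $\norm{\nabla_y\beta}_{L^2(dy)}$ into $\norm{\alpha}_{H^1(\Sigma_+\cap\Gamma)}$: on the part of $\Sigma_+\cap\Gamma$ with $z>0$ the function $\alpha$ equals the pullback of $\beta$ under the equatorial projection $y\mapsto(y,\sqrt{1-\abs y^2})$, whose induced metric and surface measure degenerate like $1-\abs y^2$ and $(1-\abs y^2)^{-1/2}$ near the rim; but $\supp\beta\subset\{\abs y\le1-\eps\}$ forces $1-\abs y^2\ge\eps$ on the support, so $\norm{\nabla_y\beta}_{L^2(dy)}^2\lesssim\eps^{-1/2}\norm{\alpha}_{H^1(\Sigma_+\cap\Gamma)}^2$, which is the origin of the $\eps^{-1/2}$ factor. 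Second, $\norm{\p_\nu(\chi\alpha)}_{L^2(\Sigma_+)}\le\norm{\chi\,\p_\nu\alpha}_{L^2(\Sigma_+)}+\norm{\chi}_{C^1}\norm{\alpha}_{L^2(\Sigma_+)}$ reduces the claim to the bound on $\norm{\chi\,\p_\nu\alpha}_{L^2(\Sigma_+)}$.

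The main obstacle is the interior term of the identity: with the above choice of $a$ it is controlled by the $H^1$-energy of $\alpha$ on a bounded shell of the exterior region, and this exterior energy must in turn be estimated — via an auxiliary energy inequality for the wave equation on $K$, analogous to Lemma \ref{lemma:energy_T} but on the exterior side — by the characteristic datum $\beta$ (once more at the cost of an $\eps^{-1/2}$ factor) and by the Dirichlet data $\alpha|_{\Sigma_+}$, the $\p_\nu\alpha$-flux occurring there being reabsorbed into the left-hand side together with the multiplier cross term. Keeping track of constants, all of them depend only on $T$ (and on the fixed auxiliary choices $\rho,\lambda$).
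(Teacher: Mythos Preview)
Your approach is essentially the same as the paper's: a Rellich-type multiplier identity (radial vector field) combined with the standard energy multiplier $\partial_t\alpha$, applied in the \emph{exterior} region where $\square\alpha=0$, with the characteristic face $\{t=z\}$ contributing only $\norm{\nabla_y\beta}_{L^2}$ thanks to $(\partial_t+\partial_z)\alpha=0$, and the conversion to $\norm{\alpha}_{H^1(\Sigma_+\cap\Gamma)}$ producing the $\varepsilon^{-1/2}$. The paper, however, streamlines the argument in three places. First, it observes (by domain of dependence for the free wave equation with vanishing data at $t<-1$ and characteristic data supported in $\{|y|\le 1\}$) that $\alpha$ is already compactly supported in $x$ for each fixed $t$; this lets one use the undressed multiplier $x\cdot\nabla\alpha$ on the full exterior slab $\{|x|\ge 1,\ z\le t\le T\}$, with no cutoff $\rho$, no ``null pieces'' to engineer, and no interior remainder to reabsorb. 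Second, it keeps the two multipliers $\partial_t\alpha$ and $x\cdot\nabla\alpha$ separate, deriving the energy bound (with an $\epsilon$ on the $\alpha_r$ flux) and the Rellich identity as two clean formulas and then combining; your large-$\lambda$ combination does the same job but is heavier. Third, $\chi$ is handled at the very end by the one-line product rule $\partial_\nu(\chi\alpha)=\chi\,\partial_\nu\alpha+(\partial_\nu\chi)\alpha$, rather than being carried through the multiplier as $\chi^2$. Your sketch is correct, but the domain-of-dependence observation removes most of the bookkeeping you flagged as the ``main obstacle.''
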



\section{The Carleman estimate and its consequences} \label{sec:Carleman}

In this section we are going to apply a suitable Carleman estimate in order  to be able to control the difference of the potentials with the boundary data. For this purpose we adapt a Carleman estimate for  general second order operators stated in \cite[Theorem A.7]{RakeshSalo2}. The trick is to choose an appropriate weight function to obtain a meaningful estimate for the wave operator. 

First, take any $\vartheta \in \R^n$ such that $|\vartheta |=2$ and consider the following smooth function,
\begin{equation} \label{eq:weight_def}
 \eta(y,z,t) := |x-\ve|^2 - \frac{1}{4}(t-z)^2.
\end{equation}
The weight in the Carleman estimate is going to be the function $\phi = e^{\lambda \eta}$ for some $\lambda>0$ large enough. This choice is made in order to have several properties. On the one hand we want $\phi$ to be sufficiently pseudoconvex so that the Carleman estimate holds for the wave operator. On the other hand,  we want $\phi$ to decay very fast when $t >z$ in order to deal with certain terms appearing in the Carleman estimate (see Lemma \ref{lemma:weight} below and its application in the proof of  Lemma \ref{lemma:w+w-}). And finally, since the $z$ coordinate is going to be determined by the direction $\omega$ of the traveling wave, we require the restriction of $\phi$ to the surface $\{t=z\}$ to be independent of $\omega$ (or in other words, dependent on $x$ but independent of the choice of coordinates $x=(y,z)$).  This is of great help since we recover $\A$ combining measurements made from waves traveling in different directions.

In order to state the Carleman estimate we fix an open set $D \subset \{ (x,t) \in  \R^{n+1} : |x|<3/2\}$   such that $Q \subset D$ (recall the notation introduced in \eqref{useful:subsets}).
\begin{Proposition} \sl \label{prop:carleman_estimate} 
Let $\phi = e^{\lambda \eta}$ for $\lambda>0$ large enough.  Let $\Omega \subset D$ be any open set with Lipschitz boundary. Then there exists some $\sigma_0>0$ such that
\begin{multline*}
\sigma \int_{\Omega} e^{2\sigma \phi} (\abs{\nabla u}^2 + \sigma^2 |u|^2) \, dx + \sigma \int_{\p\Omega} \sum_{j=0}^n  \nu_j E^j(x,\sigma \Re(u), \nabla \Re(u)) \, dS\\
 + \sigma \int_{\p\Omega} \sum_{j=0}^n \nu_j E^j(x, \sigma \Im(u), \nabla \Im(u)) \, dS 
\lesssim \int_{\Omega}e^{2\sigma \phi}\abs{\square u}^2 dx
\end{multline*}
holds for all  $u\in C^{2}(\ol{\Omega})$ and all $\sigma\geq \sigma_0$. The implicit constant depends  on $n$, $\lambda$,   and $\Omega$.  Here $\nu$ is the outward pointing unit vector normal to $\p\Omega$, and for real $v$ and $0\le j \le n$, $E^j$ is given by 
\begin{multline} \label{eq:quadratic_forms}
 E^j \big (x, \sigma e^{-\sigma \phi}  v,\nabla(e^{-\sigma \phi}  v)\big ) = -\partial_j \phi (|\nabla_x v|^2-\partial_t v^2) \\
 - \sigma^2 \partial_j \phi (| \nabla_x \phi|^2 - \partial_j \phi^2)v^2 -2 \partial_j v( \nabla_x v \cdot \nabla_x \phi - \partial_t v \partial_t\phi)-g(x,t)\partial_j v v,
\end{multline}
where $g$ is some real valued and bounded function independent of $\sigma$ and $v$ (here the  index 0 corresponds to $t$, so that $\partial_0 \phi = \partial_t \phi$).
\end{Proposition}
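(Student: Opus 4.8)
The plan is to derive this Carleman estimate as a special case of the abstract estimate for general second order operators \cite[Theorem A.7]{RakeshSalo2} applied to the wave operator $\square = \partial_t^2 - \Delta$, with the specific weight $\phi = e^{\lambda \eta}$ where $\eta$ is defined in \eqref{eq:weight_def}. First I would recall that the abstract theorem requires the weight function to be \emph{pseudoconvex} (or limiting Carleman weight, depending on the precise formulation used there) with respect to the principal symbol of $\square$ on the relevant region, and produces precisely a bulk term of the form $\sigma \int_\Omega e^{2\sigma\phi}(\abs{\nabla u}^2 + \sigma^2 \abs{u}^2)$ together with explicit boundary terms built from the quadratic forms $E^j$ coming from integration by parts. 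So the main task is to verify the pseudoconvexity condition for the concrete choice $\phi = e^{\lambda\eta}$ when $\lambda$ is taken large enough, on the fixed bounded region $D$.

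The key computation is the following. For a weight of the form $\phi = e^{\lambda\eta}$ one has $\nabla_{x,t}\phi = \lambda e^{\lambda\eta}\nabla_{x,t}\eta$ and the Hessian is $\nabla^2\phi = \lambda e^{\lambda\eta}(\nabla^2\eta + \lambda \nabla\eta \otimes \nabla\eta)$; the $\lambda^2$ term dominates for large $\lambda$, so the Hessian is essentially a large positive multiple of $\nabla\eta\otimes\nabla\eta$ plus a controlled error. One then checks Hörmander's (sub)ellipticity/pseudoconvexity condition for the symbol $p(x,t,\xi,\tau) = \abs{\xi}^2 - \tau^2$ of $\square$: on the characteristic set, and where the Poisson bracket $\{p, \{p,\phi\}\}$ (together with the lower-order terms in the conjugated operator) must be positive. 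The point is that $\eta = \abs{x-\vartheta}^2 - \tfrac14(t-z)^2$ has been engineered so that $\nabla_x\eta = 2(x-\vartheta)$ never vanishes on $D$ (since $\abs{\vartheta}=2$ keeps $\vartheta$ away from the ball $\abs{x}<3/2$), and the $t$-part contributes the right sign; feeding this into the bracket computation gives a strictly positive lower bound once $\lambda$ is chosen large. This is exactly the standard mechanism by which a convex (here quadratic) function $\eta$ with nonvanishing gradient is turned into a Carleman weight for a second order operator after exponentiation, and it is the heart of the argument.

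Once pseudoconvexity is verified, the abstract theorem directly yields the estimate with the bulk term and boundary integrals $\sigma\int_{\partial\Omega}\sum_j \nu_j E^j$; I would then just record that the $E^j$ produced by the integration by parts in the proof of \cite[Theorem A.7]{RakeshSalo2}, after substituting $u = e^{-\sigma\phi}v$ and separating real and imaginary parts (since $u$ is complex valued but $\square$ has real coefficients, the estimate applies to $\Re u$ and $\Im u$ separately and one adds the two), take precisely the form \eqref{eq:quadratic_forms}, with $g$ the bounded real function arising from the first-order terms generated when conjugating $\square$ by $e^{\sigma\phi}$ and regrouping. The dependence of the implicit constant on $n$, $\lambda$ and $\Omega$ (through the Lipschitz character of $\partial\Omega$ and the diameter of $\Omega \subset D$) is inherited directly from the abstract statement. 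I expect the main obstacle to be purely bookkeeping: matching the normalization of $E^j$ in \eqref{eq:quadratic_forms} with whatever normalization appears in \cite[Theorem A.7]{RakeshSalo2}, i.e.\ carefully tracking the boundary terms through the conjugation and the integration by parts, rather than any genuine analytic difficulty — the pseudoconvexity verification, while the conceptual core, is a short explicit computation thanks to the quadratic form of $\eta$.
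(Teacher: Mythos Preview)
Your proposal is correct and follows essentially the same route as the paper: reduce to \cite[Theorem A.7]{RakeshSalo2}, apply it to $\Re u$ and $\Im u$ separately and add, and note that the boundary forms $E^j$ are computed in \cite[Section A.2]{RakeshSalo2}. The only substantive difference is in how the pseudoconvexity of $\phi=e^{\lambda\eta}$ is verified. You sketch the general mechanism (Hessian of $e^{\lambda\eta}$ dominated by $\lambda^2\nabla\eta\otimes\nabla\eta$, positivity of the relevant Poisson bracket for large $\lambda$), whereas the paper invokes the structural results \cite[Propositions A.3 and A.5]{RakeshSalo2}: these say that $e^{\lambda\eta}$ is strongly pseudoconvex for large $\lambda$ provided $\nabla_{x,t}\eta\neq 0$ on $\overline D$ and the \emph{level surfaces} of $\eta$ are pseudoconvex with respect to $\square$. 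The first condition is immediate from $|\vartheta|=2$; the second is the explicit finite computation
\[
\tau^2\partial_t^2\eta - 2\tau\,\xi\cdot\nabla_x\partial_t\eta + \sum_{j,k}\xi_j\xi_k\,\partial_{jk}^2\eta > 0 \quad\text{on } \{\tau^2=|\xi|^2,\ \tau\partial_t\eta=\xi\cdot\nabla_x\eta\},
\]
which for $\eta=|x-\vartheta|^2-\tfrac14(t-z)^2$ reduces (after normalizing $\tau=1$) to checking $-1+b+(b-2)\rho^2-2\rho>0$ for $|\rho|\le 1$ with $b=4$. This is the concrete step your sketch glosses over; it is short but not entirely trivial, and it is what actually pins down the factor $\tfrac14$ in the definition of $\eta$.
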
 
\begin{proof}
Since  $\square h $ is real if $h$ is a real function, the statement  follows from applying  Theorem A.7 of \cite{RakeshSalo2} to the real and imaginary parts of $u$, and then adding the resulting estimates.  
 However, to apply the mentioned result, one needs to verify that $\phi$ is {\it strongly pseudoconvex} in the domain $D$ with respect the wave operator $\square$. The reader can find the precise definition of this condition in   \cite[Appendix]{RakeshSalo2}), though it is not necessary for the discussion that follows.

Denote by $(\xi,\tau) \in \R^{n+1}$ the Fourier variables corresponding to $(x,t)$, where $\xi \in \R^n$ and $\tau\in \R$.  \\
It can be proved that a function $\phi = e^{\lambda \eta}$ will be strongly pseudoconvex for $\lambda >0$ large enough provided $\eta$ satisfies certain technical conditions.  
By Propositions A.3 and A.5 in \cite{RakeshSalo2}, these conditions are the following: one needs to verify that  the {\it level surfaces} of $\eta$ are  {\it pseudoconvex} with respect to the wave operator $\square$, and that $\abs{(\nabla_{x,t} \eta)(x,t)}>0$ for all $(x,t)\in \ol{D}$. \\
In our case, the second property  is immediate since $|\ve|=2$ implies that  $|x-\ve|^2$ has non-vanishing gradient in $\ol{D}$.
The reader can find in \cite[Definition A.1]{RakeshSalo2}  a precise definition of the first property.
For the purpose of this work it is enough to use that the level surfaces of  a function $f$ are pseudoconvex  w.r.t.\ $\square$ in a domain $D$ if for all $(x,t) \in \ol{D}$ and $(\xi,\tau) \in \R^{n+1}$ 
\begin{equation}\label{eq:c_pseudoconvex}
\tau^2 \p_t^2 f -2\tau \xi \cdot \nabla_x \p_t f + \sum_{j,k=1}^{n}  \xi_j\xi_k \, \partial^2_{jk}  f  > 0 \;  \text{ when } \tau^2 = \abs{\xi}^2 \text{ and } \tau \p_t f - \xi \cdot \nabla_x f = 0.
\end{equation}
(notice that $\tau^2 - |\xi|^2$ is the  symbol of the wave operator). 

We now consider $f$ given by
\[
f(y,z,t)= \frac{1}{2}(b|x-\ve|^2 - (t-z)^2),
\]
where $|\ve|=2$, $b>0$, and we are using coordinates $(x,t) = (y,z,t)  $, $ y\in \R^{n-1}$. Let  $(\zeta, \rho,\tau) \in \R^{n-1} \times \R\times \R$ be the  Fourier variables counterpart to $(y, z, t)$. 
A straightforward computation with $j, k =1,  \ldots, n-1$ and $\partial_j = \partial_{y_j}$, shows that the only non-vanishing  second order derivatives of $f$ are 
\[
 \partial^2_{t}f= -1,    \quad  \partial_{jk}^2   f= b\delta_{j,k} ,
  \quad \partial_z^2   f= b-1, \quad \partial^2_{tz}f=1,
\]
Then, in particular,  \eqref{eq:c_pseudoconvex} is verified if we show that
\begin{equation}\label{eq:c_pseudoconvex_2}
-\tau^2+ b|\zeta|^2+ (b-1)\rho^2- 2\tau\rho>0,\quad \text{whenever } \;  \tau^2= |\zeta|^2 + \rho^2.
\end{equation}
Note that both conditions are  homogeneous in the variables $(\zeta,\rho,\tau)$, so it is enough to study the case $\tau=1$. Thus \eqref{eq:c_pseudoconvex_2} becomes
\begin{equation*}
-1 + b + (b-2)\rho^2 - 2\rho>0  \qquad \text{ for} \;  -1 \le \rho\le 1.
\end{equation*}
It is not difficult to verify that for $b>3$ this inequality always holds. This proves that for $b=4$ the level surfaces of  $f$ are {\it strongly pseudoconvex} with respect to $\square$, and therefore, the same holds for $\eta= \frac{1}{2}f$.

To finish, we mention that precise  formula \eqref{eq:quadratic_forms} of the quadratic forms  $E_j$ is computed in detail in \cite[Section A.2]{RakeshSalo2}.
\end{proof}

In the following lemma we prove a couple of properties of the weight $\phi$ that will be important later in order to show that some   terms appearing in the Carleman estimate are suitably small in the parameter $\sigma$. 

\begin{Lemma} \sl \label{lemma:weight}
Let $\ve\in \mathbb{R}^n$ with $|\ve|=2$, and $\eta$ as in \eqref{eq:weight_def}. 
Then, the following properties are satisfied for any $T>7$:
\begin{itemize}
\item[i)]  The smallest value of $\phi$ on $\Gamma$ is strictly larger than the largest value of $\phi$ on $\Gamma_{-T} \cup \Gamma_{T}$.

\item[ii)] The function 
\[ \kappa (\sigma) = \sup_{(y,z) \in \overline{B}} \int_{-T}^T  e^{2\sigma(\phi(y,z,t)-\phi(y,z,z))}  \, dt \]
\end{itemize}
satisfies that  $\underset{\sigma \to \infty}{\lim} \kappa(\sigma) =0$.
\end{Lemma}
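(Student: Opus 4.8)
The plan is to analyze the function $\eta(y,z,t) = |x-\ve|^2 - \tfrac14(t-z)^2$ directly on the relevant regions, using that $\phi = e^{\lambda \eta}$ is an increasing function of $\eta$, so all comparisons of $\phi$ reduce to comparisons of $\eta$, and the $\sigma$-dependent integral in (ii) is controlled by the sign of $\eta(y,z,t) - \eta(y,z,z)$.

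For part (i): on $\Gamma = \overline Q \cap \{t=z\}$ we have $\eta(y,z,z) = |x-\ve|^2$ with $|x| \le 1$ and $|\ve| = 2$, so $1 \le |x-\ve| \le 3$, giving $\eta \ge 1$ on $\Gamma$. On $\Gamma_{\pm T} = \overline Q \cap \{t = \pm T\}$ we still have $|x-\ve|^2 \le 9$, but now $t - z = \pm T - z$ with $|z| \le 1$ and $T > 7$, so $|t-z| \ge T - 1 > 6$, hence $\tfrac14(t-z)^2 > 9$ and therefore $\eta < 9 - 9 = 0$ on $\Gamma_{\pm T}$. Thus $\eta > 1 > 0 > \eta$ comparing $\Gamma$ with $\Gamma_{\pm T}$, and since $\phi = e^{\lambda\eta}$ is strictly increasing in $\eta$, the smallest value of $\phi$ on $\Gamma$ exceeds the largest value on $\Gamma_{-T} \cup \Gamma_T$. (One should double check the constant: $|z| \le 1$ because $(y,z) \in \overline B$ forces $z = x\cdot\omega$ with $|x| \le 1$; and $T>7$ is exactly what makes $(T-1)^2/4 > 9$ hold with room to spare, since $(7-1)^2/4 = 9$, so in fact $T>7$ gives strict inequality.)

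For part (ii): fix $(y,z) \in \overline B$ and write the integrand as $e^{2\sigma g(t)}$ where $g(t) := \phi(y,z,t) - \phi(y,z,z)$. Since $\phi$ is increasing in $\eta$ and $\eta(y,z,t) = \eta(y,z,z) - \tfrac14(t-z)^2 \le \eta(y,z,z)$ with equality only at $t = z$, we have $g(t) \le 0$ with $g(t) = 0$ only at $t = z$; moreover on the compact set $\{|z|\le 1,\ |t|\le T\}$ the function $g$ is continuous and, away from $t=z$, bounded above by a strictly negative quantity depending only on $|t-z|$. Concretely I would extract a bound of the form $g(t) \le -c\,(t-z)^2$ for a constant $c > 0$ uniform over $(y,z) \in \overline B$ and $t \in [-T,T]$: this follows because $\phi' = \lambda e^{\lambda\eta}$ is bounded below by a positive constant on the (bounded) range of $\eta$ values occurring, so $\phi(y,z,t) - \phi(y,z,z) \le -c_0\big(\eta(y,z,z) - \eta(y,z,t)\big) = -\tfrac{c_0}{4}(t-z)^2$ by the mean value theorem. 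Then
\[
\kappa(\sigma) \le \sup_{(y,z)\in\overline B}\int_{-T}^{T} e^{-\tfrac{c_0\sigma}{2}(t-z)^2}\,dt \le \int_{-\infty}^{\infty} e^{-\tfrac{c_0\sigma}{2}s^2}\,ds = \sqrt{\frac{2\pi}{c_0\sigma}} \xrightarrow[\sigma\to\infty]{} 0,
\]
which is the claim.

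The only mildly delicate point — the "main obstacle", though it is more bookkeeping than difficulty — is obtaining the \emph{uniform} quadratic lower bound $g(t) \le -c(t-z)^2$ over all $(y,z) \in \overline B$: one must check that the range of $\eta$-values $\{\eta(y,z,t) : (y,z)\in\overline B,\ t\in[-T,T]\}$ is a bounded interval (it is, contained in $[-9, 9]$ roughly), so that $\phi' = \lambda e^{\lambda\eta}$ admits a uniform positive lower bound there, after which the mean value theorem gives the bound with a constant independent of the base point. Once that is in place both parts are immediate from the explicit form of $\eta$ and the monotonicity of $t \mapsto e^{\lambda\eta}$.
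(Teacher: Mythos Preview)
Your proof is correct. Part (i) matches the paper's argument essentially line for line: both compute $\min_\Gamma \eta \ge 1$ and $\max_{\Gamma_{\pm T}} \eta < 9 - (T-1)^2/4 < 0$ for $T > 7$, then invoke monotonicity of $e^{\lambda\,\cdot}$.

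For part (ii) you take a slightly different route. The paper uses the elementary inequality $1 - e^{-s} \ge \tfrac12\min\{1,s\}$ together with $e^{\lambda\eta_0} \ge 1$ to get
\[
\phi(y,z,z) - \phi(y,z,t) \;\ge\; \tfrac12\min\{1,\tfrac{\lambda}{4}(t-z)^2\},
\]
and then concludes by dominated convergence. You instead apply the mean value theorem to $s \mapsto e^{\lambda s}$ to obtain a uniform quadratic bound $\phi(y,z,t) - \phi(y,z,z) \le -c_0(t-z)^2/4$, which yields a Gaussian integrand and the explicit rate $\kappa(\sigma) \lesssim \sigma^{-1/2}$. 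Your version gives more information (a rate rather than mere convergence); the paper's version avoids having to track the lower bound on $\eta$ over the whole box. One small correction: the range of $\eta$ over $\overline B \times [-T,T]$ is not contained in $[-9,9]$ as you parenthetically suggest, since $(t-z)^2/4$ can be as large as $(T+1)^2/4$, but it \emph{is} bounded below by $1 - (T+1)^2/4$, and that is all your mean-value argument requires.
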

\begin{proof}
Let $\eta_0(y, z):=  \vert x-\ve \vert^2$. For the first assertion it is enough to prove that
\begin{equation*} 
\max_{\Gamma_{-T}\cup \Gamma_T} e^{\lambda (\eta_0 - \frac{1}{4}(t-z)^2)} <\min_{\Gamma} e^{\lambda \eta_0},
\end{equation*}
for $T$ large enough, since $\eta(y,z,z) = \eta_0(y, z)$. Observing that $|(y,z)| \le 1$ in $\Gamma$ and $\Gamma_{\pm T}$,  the previous inequality  will hold if 
\[
\max_B \eta_0- \min_B \eta_0< \frac{1}{4}(T-1)^2,
\]
and since $\max_B \eta_0- \min_B \eta_0 = 8$, this is true for any $T > 7$. This yields the first assertion.
To prove the second assertion we are going to use  the following inequality
\[
1- e^{-s} \geq \min \left\{ 1/2, s/2\right\}, \quad s\geq 0.
\]
Since $e^{\lambda \eta_0}>1$  always, we have
\[
\phi(y,z,z)-\phi(y,z,t)= e^{\lambda \eta_0} \left( 1- e^{-\frac{\lambda}{4}(t-z)^2} \right ) \geq 1- e^{-\frac{\lambda}{4}(t-z)^2}\geq \frac{1}{2}  \min \left\{ 1, \frac{\lambda}{4} (t-z)^2 \right\},
\]
and hence, since $|z| \le 1$,
\begin{align*}
0\leq \int^{T}_{-T} e^{2\sigma (\phi(y,z,t)-\phi(y,z,z))}dt \leq \int_{-T}^T e^{-\sigma  \min \left\{ 1, \frac{\lambda}{4} (t-z)^2\right\}}dt &\leq \int_{-T-z}^{T-z}e^{-\sigma  \min \left\{ 1, \frac{\lambda}{4} t^2\right\}}dt\\
 &\leq \int_{-T-1}^{T+1}e^{-\sigma  \min \left\{ 1, \frac{\lambda}{4} t^2\right\}}dt.
\end{align*}
By the dominated convergence theorem, the last integral goes to zero when $\sigma\to \infty$, and therefore $\displaystyle{\lim_{\sigma \to \infty} \kappa(\sigma) =0}$. This completes the proof.  
\end{proof}

We now adapt the Carleman estimate of Proposition \ref{prop:carleman_estimate}  to our purposes. First, define
\begin{equation*} 
\phi_0(x,\vartheta) := e^{\lambda\vert x-\ve \vert^2},
\end{equation*}
for $x\in \R^n$ and $|\vartheta| =2$, so that  $\phi(y,z,z) = \phi_0(y,z)$. From now on it is convenient to use the notation 
\begin{equation*}
 Z : =\partial_t + \partial_z, \qquad N : =\partial_t - \partial_z.
\end{equation*} 
We are interested in applying Proposition \ref{prop:carleman_estimate} for  $\Omega = Q_\pm $. The boundary of $Q_\pm$ is composed of the following regions:  $\partial Q_\pm = \Gamma \cup \Sigma_\pm \cup \Gamma_{\pm T}$. We are  going to use the precise formula \eqref{eq:quadratic_forms} only in $\Gamma$, where in fact an explicit computation yields that
\begin{multline} \label{eq:tangential}
 \sum_{j=0}^n    \nu_j  E^j \big (x, \sigma e^{-\sigma \phi}  v, \nabla(e^{-\sigma \phi}  v)\big ) = 
     4(N\phi)((Zv)^2 + \sigma^2 (Z \phi)^2 v^2) \\
  + 2(Zv)(|\nabla_y v|^2 - \sigma^2 |\nabla_y \phi|^2v^2) - 4(Zv)(\nabla_y v\cdot \nabla_y \phi) -2(Zv)gv  
 \end{multline}
  for any real function $v \in C^2(\ol{Q}_\pm )$ (for a detailed derivation of this formula see \cite[Section A.2]{RakeshSalo2}). The important thing about this identity is that it does not depend on $\nabla v$ but just on $Z v$ and $\nabla_y v$, which are  derivatives along directions tangent to $\Gamma$.
  
The following lemma is the consequence of the Carleman estimate in Proposition \ref{prop:carleman_estimate} that is relevant for our fixed angle scattering problem. It is an analogue of \cite[Proposition 3.2]{RakeshSalo2} adapted to the case of magnetic potentials.

\begin{Lemma} \sl  \label{lemma:w+w-}
 Let $T>7$ and $\omega \in S^{n-1}$, and let $\phi_0$ be as above. Assume that
\begin{equation} \label{eq:condw+w-}
\left |(\square  + 2\E_\pm \cdot \nabla + f_\pm) w_\pm \right | \lesssim |\A_\pm| + |q_\pm|  \quad \text{in} \quad Q_\pm,
\end{equation} 
 and that
\begin{equation} \label{eq:condw+w-2}
\left | (\partial_t + \partial_z - \omega \cdot \E_\pm) w_\pm \right | \gtrsim  |h_\pm| \quad \text{in} \quad \Gamma,
\end{equation}
 hold for some vector fields $\E_\pm, \A_\pm \in C(\R^n;\C^n)$, some functions $f_\pm , h_\pm, q_\pm \in C(\R^n;\C)$, and $w_\pm \in C^2(\ol{Q}_\pm)$. Then there is a constant $c>0$ such that,   for  $\sigma>0$ large enough,
 \begin{multline} \label{eq:est_0Carl}
 \sum_{\pm} \norm{ e^{\sigma  \phi_0}  h_\pm }_{L^2(B)}^2 \lesssim \sigma^3 e^{c \sigma} \norm{w_+ - w_-}_{H^1(\Gamma)}( \norm{w_+}_{H^1(\Gamma)} + \norm{ w_-}_{H^1(\Gamma)} )  
\\
 + \gamma (\sigma) \sum_{\pm}   \left(  \norm{e^{\sigma\phi_0} \A_\pm}_{L^2(B)}^2 + \norm{e^{\sigma\phi_0} q_\pm}_{L^2(B )}^2 \right) + \sigma^3 e^{c \sigma}  \sum_{\pm} \left (  \norm{w_\pm}_{H^1(\Sigma_\pm)}^2   + \norm{ \partial_\nu w_\pm}_{L^2(\Sigma_\pm)}^2 \right),
  \end{multline}
where $\gamma$ is a positive function satisfying $\gamma (\sigma) \to 0$ as $\sigma \to \infty$. The implicit constant is independent of $\sigma$ and $\ve$ and $w_\pm$. 
\end{Lemma}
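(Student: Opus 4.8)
The plan is to apply the Carleman estimate of Proposition~\ref{prop:carleman_estimate} separately on the two domains $\Omega = Q_+$ and $\Omega = Q_-$ to the functions $w_\pm$, with the weight $\phi = e^{\lambda\eta}$ and $\lambda$ large enough. Since $\square w_\pm = -(2\E_\pm\cdot\nabla + f_\pm)w_\pm + \big[(\square + 2\E_\pm\cdot\nabla + f_\pm)w_\pm\big]$, the right-hand side of the Carleman inequality is controlled, using \eqref{eq:condw+w-}, by $\int_{Q_\pm} e^{2\sigma\phi}(|\nabla w_\pm|^2 + |w_\pm|^2 + |\A_\pm|^2 + |q_\pm|^2)\,dx$. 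The first two terms here are absorbed into the left-hand side volume term $\sigma\int_{Q_\pm}e^{2\sigma\phi}(|\nabla w_\pm|^2 + \sigma^2|w_\pm|^2)\,dx$ for $\sigma$ large. For the $\A_\pm$ and $q_\pm$ terms, we bound $e^{2\sigma\phi(y,z,t)} \le e^{2\sigma\phi_0(y,z)}$ on $Q_\pm$ using $\phi(y,z,t) \le \phi(y,z,z) = \phi_0(y,z)$ (since $\phi = e^{\lambda\eta}$ and $\eta(y,z,t) = \eta_0(y,z) - \tfrac14(t-z)^2 \le \eta_0(y,z)$), and then integrate out the $t$-variable over an interval of length $\le 2T$; this produces $2T\,\norm{e^{\sigma\phi_0}\A_\pm}_{L^2(B)}^2 + 2T\,\norm{e^{\sigma\phi_0}q_\pm}_{L^2(B)}^2$. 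To get the decaying factor $\gamma(\sigma)$, one instead bounds $\int_{Q_\pm} e^{2\sigma\phi}|\A_\pm|^2 \le \kappa(\sigma)\norm{e^{\sigma\phi_0}\A_\pm}_{L^2(B)}^2$ with $\kappa$ as in Lemma~\ref{lemma:weight}(ii), which tends to $0$; the volume terms coming from $|\nabla w_\pm|^2 + |w_\pm|^2$ are not decaying but are absorbed, so one should be a little careful and split the source: write $\gamma(\sigma) = \max\{\kappa(\sigma), \text{something small}\}$ — in fact simply taking $\gamma(\sigma) = C\kappa(\sigma)$ works once the non-decaying volume pieces are absorbed on the left.

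Next I would handle the three boundary pieces $\partial Q_\pm = \Gamma \cup \Sigma_\pm \cup \Gamma_{\pm T}$ of the boundary integral $\sigma\int_{\partial Q_\pm}\sum_j \nu_j(E^j(\Re w_\pm) + E^j(\Im w_\pm))\,dS$. On $\Sigma_\pm$ the quadratic forms $E^j$ are bounded (pointwise) by $C(\sigma^2|w_\pm|^2 + |\nabla_{x,t}w_\pm|^2)$, and since $\phi \le e^{4\lambda} =: $ a fixed constant on $\overline{D}$, the total contribution of $\Sigma_\pm$ is $\lesssim \sigma^3 e^{c\sigma}(\norm{w_\pm}_{H^1(\Sigma_\pm)}^2 + \norm{\partial_\nu w_\pm}_{L^2(\Sigma_\pm)}^2)$ — note $\nabla_{x,t}w_\pm$ on $\Sigma_\pm$ splits into tangential derivatives (controlled by $\norm{w_\pm}_{H^1(\Sigma_\pm)}$) and the normal derivative $\partial_\nu w_\pm$, giving exactly the last term in \eqref{eq:est_0Carl}. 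On $\Gamma_{\pm T}$ the same crude bound applies, and the contribution is $\lesssim \sigma^3 e^{c\sigma}(\norm{w_\pm}_{H^1(\Gamma_{\pm T})}^2 + \dots)$; the point of Lemma~\ref{lemma:weight}(i) is that $\phi$ on $\Gamma_{\pm T}$ is strictly smaller than $\phi$ on $\Gamma$, so this term has a smaller exponential weight than the main term we want to extract and can be absorbed — or, more simply, using finite speed of propagation and the initial conditions $w_\pm$ vanish near $\Gamma_{\pm T}$ except possibly on a set where one argues as in \cite{RakeshSalo2}; I would follow whichever route \cite[Proposition 3.2]{RakeshSalo2} uses and note the $\Gamma_{\pm T}$ term is harmless.

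The crucial boundary piece is $\Gamma = \{t = z\}$, where I would use the explicit tangential formula \eqref{eq:tangential}. The leading term there is $4(N\phi)(Zw_\pm)^2 + 4\sigma^2(N\phi)(Z\phi)^2 w_\pm^2$ plus lower-order tangential terms; since on $\Gamma$ one computes $N\phi = (\partial_t - \partial_z)\phi$, and a direct computation with $\phi = e^{\lambda\eta}$, $\eta = \eta_0 - \tfrac14(t-z)^2$ gives $N\phi|_{t=z} = \lambda e^{\lambda\eta_0}\cdot \partial_t[-\tfrac14(t-z)^2]|_{t=z}$... — more carefully, $N\eta|_{t=z} = \partial_t\eta - \partial_z\eta = -\tfrac12(t-z) - (\partial_z\eta_0 + \tfrac12(t-z))|_{t=z} = -\partial_z\eta_0$; this sign, together with $b = 4 > 3$ as in the pseudoconvexity computation, is what makes $N\phi$ of the right sign on the relevant part. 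The quadratic-in-$v$ terms in \eqref{eq:tangential}, namely $4\sigma^2(N\phi)(Z\phi)^2 v^2 - 2\sigma^2(Zv)|\nabla_y\phi|^2 v^2$, dominate for large $\sigma$ and, after completing the square in $Zv$, yield a lower bound $\gtrsim \sigma^2\int_\Gamma e^{2\sigma\phi_0}|(Zv - \text{correction})|^2$ — and by hypothesis \eqref{eq:condw+w-2}, $|Zw_\pm| \gtrsim |(\partial_t + \partial_z - \omega\cdot\E_\pm)w_\pm| - |\omega\cdot\E_\pm||w_\pm| \gtrsim |h_\pm| - C|w_\pm|$ on $\Gamma$... so the $\Gamma$-term from $Q_+$ alone contains $\sigma^2\norm{e^{\sigma\phi_0}h_+}_{L^2(B)}^2$ modulo lower-order pieces $\sigma^2\int_\Gamma e^{2\sigma\phi_0}|w_+|^2$ and $\int_\Gamma e^{2\sigma\phi_0}|\nabla_y w_+|^2$. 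Here is the key difficulty and the reason two directions are needed: the $\Gamma$-integral from $Q_-$ carries the \emph{opposite} sign of $N\phi$, so individually the sign of the $\Gamma$-term is wrong on one of $Q_\pm$; the resolution, exactly as in \cite{RakeshSalo2}, is to add the two Carleman inequalities for $Q_+$ and $Q_-$, whereupon the $(Zw_+)^2$ and $(Zw_-)^2$ contributions combine — the cross structure means that after adding, the dangerous terms reorganize into $\norm{e^{\sigma\phi_0}(Zw_+ - Zw_-)}$-type quantities plus controllably-signed squares, and $Zw_+ - Zw_-$ on $\Gamma$ is controlled by $\norm{w_+ - w_-}_{H^1(\Gamma)}$. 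This is precisely where the factor $\sigma^3 e^{c\sigma}\norm{w_+ - w_-}_{H^1(\Gamma)}(\norm{w_+}_{H^1(\Gamma)} + \norm{w_-}_{H^1(\Gamma)})$ in \eqref{eq:est_0Carl} originates: $\sigma^3 e^{c\sigma}$ because the weight on $\Gamma$ is at most $e^{c\sigma}$ and one power of $\sigma$ comes from the Carleman prefactor and two more from the $\sigma^2$ in front of the quadratic terms, and the product of $H^1(\Gamma)$-norms from a Cauchy--Schwarz splitting of the cross term. Finally, the residual lower-order $\Gamma$-terms $\sigma^2\int_\Gamma e^{2\sigma\phi_0}|w_\pm|^2 + \int_\Gamma e^{2\sigma\phi_0}|\nabla_{\Gamma}w_\pm|^2$ are absorbed using the energy estimate of Lemma~\ref{lemma:energy_estimate_gamma} (applied with $\phi = \phi_0$), which bounds them by $\sigma^3\norm{e^{\sigma\phi_0}w_\pm}_{L^2(Q_\pm)}^2 + \sigma\norm{e^{\sigma\phi_0}\nabla_{x,t}w_\pm}_{L^2(Q_\pm)}^2$ plus $\Sigma_\pm$-boundary terms plus $\norm{e^{\sigma\phi_0}(\square + \dots)w_\pm}_{L^2(Q_\pm)}^2$; the first batch is absorbed into the Carleman volume term as before, the source term is again $\lesssim \gamma(\sigma)(\norm{e^{\sigma\phi_0}\A_\pm}^2 + \norm{e^{\sigma\phi_0}q_\pm}^2)$ after absorbing volume pieces, and the $\Sigma_\pm$-terms match the last term of \eqref{eq:est_0Carl}. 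Collecting everything and dividing through by $\sigma^2$ (which is harmless since we want an estimate for large $\sigma$) yields \eqref{eq:est_0Carl}. The main obstacle, as indicated, is the bookkeeping of the $\Gamma$-term signs and the precise way the $Q_+$ and $Q_-$ estimates must be summed so that the wrongly-signed contributions cancel against a $w_+ - w_-$ difference; getting the powers of $\sigma$ and the constant $c$ in $e^{c\sigma}$ right in that cancellation is the delicate part, and I would lean heavily on the computation in \cite[Proposition 3.2]{RakeshSalo2}, inserting the extra first-order terms $2\E_\pm\cdot\nabla w_\pm$ and $\omega\cdot\E_\pm\, w_\pm$ which only contribute lower-order perturbations absorbed exactly as the $f_\pm$ and $V$ terms.
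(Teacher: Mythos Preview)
Your overall strategy is correct and matches the paper's, but the logical flow around the $\Gamma$ boundary term is inverted. In the paper, the term $\norm{e^{\sigma\phi_0}h_\pm}_{L^2(B)}^2$ does \emph{not} come from the Carleman boundary integral $\sigma\int_\Gamma e^{2\sigma\phi}F^j\nu_j$. Rather, Lemma~\ref{lemma:energy_estimate_gamma} is applied first to add $\sigma^2\norm{e^{\sigma\phi}w_\pm}_{L^2(\Gamma)}^2 + \norm{e^{\sigma\phi}\nabla_\Gamma w_\pm}_{L^2(\Gamma)}^2$ to the left-hand side (it is bounded by the Carleman volume terms plus source and $\Sigma_\pm$ terms), and only then does condition~\eqref{eq:condw+w-2} together with boundedness of $\E_\pm$ give $\norm{e^{\sigma\phi_0}h_\pm}^2 \lesssim \norm{e^{\sigma\phi}Zw_\pm}_{L^2(\Gamma)}^2 + \sigma^2\norm{e^{\sigma\phi}w_\pm}_{L^2(\Gamma)}^2$. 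The Carleman $\Gamma$-integral $\sigma\int_\Gamma e^{2\sigma\phi_0}F^j\nu_j$ has no useful sign and is never used to extract information: it is simply carried along on the left and disposed of only at the very end, when adding the $Q_+$ and $Q_-$ estimates turns it into $\sigma\int_\Gamma e^{2\sigma\phi_0}\big(F^j(w_+)-F^j(w_-)\big)\nu_j$, which by bilinearity of the quadratic form is bounded by $\sigma^3 e^{c\sigma}\norm{w_+-w_-}_{H^1(\Gamma)}(\norm{w_+}_{H^1(\Gamma)}+\norm{w_-}_{H^1(\Gamma)})$. Your attempt to extract $(Zw_\pm)^2$ directly from \eqref{eq:tangential} via sign analysis and completing the square would not work cleanly, because that form is indefinite; and there is no division by $\sigma^2$ at the end.

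A second point: for the $\Gamma_{\pm T}$ contribution you cannot invoke finite speed of propagation, since $w_\pm$ are arbitrary $C^2$ functions with no vanishing-for-small-time assumption. The paper uses Lemma~\ref{lemma:energy_T} (an unweighted energy estimate) to bound $\norm{w_\pm}_{L^2(\Gamma_{\pm T})}+\norm{\nabla w_\pm}_{L^2(\Gamma_{\pm T})}$ by $\norm{w_\pm}_{H^1(\Gamma)}$ plus source and $\Sigma_\pm$ terms; multiplying by $e^{\sigma\sup_{\Gamma_T}\phi}$ and invoking Lemma~\ref{lemma:weight}(i) then yields an extra factor $e^{-2\delta\sigma}$ in front of $\norm{e^{\sigma\phi}w_\pm}_{H^1(\Gamma)}^2$, allowing absorption into the left for large $\sigma$. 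This step also contributes a $\sigma^3 e^{-2\delta\sigma}$ piece to $\gamma(\sigma)$, so in fact $\gamma(\sigma)=\kappa(\sigma)+\sigma^3 e^{-2\delta\sigma}$ rather than just $\kappa(\sigma)$.
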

 In the proof of the lemma it will be useful to introduce the   notation 
\begin{equation} \label{eq:F}
F^j(x,\sigma u, \nabla u) :=  e^{-2\sigma \phi}  E^j \big  (x,\sigma \Re(u), \nabla \Re(u)\big ) +  e^{-2\sigma \phi}E^j \big  (x, \sigma \Im(u), \nabla \Im(u) \big ) . 
\end{equation}
We remark that $F^j$ depends on the function $g$ in Proposition \ref{prop:carleman_estimate} which could in principle be different in the domains $Q_+$ and $Q_-$, but as discussed in \cite[footnote 1 in the proof of Proposition 3.2]{RakeshSalo2} we can choose $g$ to be the same both in $Q_+$ and $Q_-$.
\begin{proof}
We first apply Proposition \ref{prop:carleman_estimate} with $\Omega = Q_+ \subset D$. To simplify notation we use $w$, $\A $, $\E$, $q$, and $h$ instead of $w_+$, $\A_+$, $\E_+$, $q_+$, and $h_+$. For $\sigma >0$ large enough, Proposition \ref{prop:carleman_estimate} and \eqref{eq:F} yield the estimate
\begin{multline*} 
\sigma^3 \norm{e^{\sigma\phi}w}_{L^2(Q_+)}^2   +\sigma \norm{e^{\sigma\phi} \nabla w}_{L^2(Q_+)}^2 + \sigma \int_{\partial Q_+ } e^{2\sigma \phi} F^j (x,\sigma w, \nabla w)\nu_j  \, dS 
   \lesssim  \norm{e^{\sigma  \phi} \square w}^2_{L^2(Q_+)},
  \end{multline*}
where to shorten notation, we are temporarily adopting the convention that $F^j\nu_j$ stands for $\sum_{j=0}^n F^j\nu_j$. Since $\E$ and $f$ are bounded, a direct perturbation argument (one can absorb the extra terms in the left hand side for $\sigma$ large enough) allows to obtain from the previous estimate that
  \begin{multline*} 
\sigma^3 \norm{e^{\sigma\phi}w}_{L^2(Q_+)}^2   +\sigma \norm{e^{\sigma\phi} \nabla w}_{L^2(Q_+)}^2 + \sigma \int_{\partial Q_+ } e^{2\sigma \phi} F^j (x,\sigma w, \nabla w)\nu_j  \, dS \\   
   \lesssim  \norm{e^{\sigma  \phi} (\square + 2\E \cdot \nabla +f  )w}^2_{L^2(Q_+)},
  \end{multline*}
for $\sigma$ large enough. Then, since $\partial Q_+ = \Gamma \cup \Sigma_+ \cup \Gamma_T$ we have
  \begin{multline} \label{eq:carl_1}
\sigma^3 \norm{e^{\sigma\phi}w}_{L^2(Q_+)}^2   +\sigma \norm{e^{\sigma\phi} \nabla w}_{L^2(Q_+)}^2 + \sigma \int_{\Gamma} e^{2\sigma \phi} F^j (x,\sigma w, \nabla w)\nu_j  \, dS \\   
   \lesssim  \norm{e^{\sigma  \phi} (\square + 2\E \cdot \nabla +f  )w}^2_{L^2(Q_+)} + \sigma^3 \norm{e^{\sigma\phi}w}_{L^2(\Sigma_+ \cup \Gamma_T)}^2   + \sigma \norm{e^{\sigma\phi} \nabla w}_{L^2(\Sigma_+ \cup \Gamma_T)}^2.
  \end{multline}
 The energy estimate in Lemma \ref{lemma:energy_estimate_gamma} yields
\begin{multline*} 
\sigma^2 \norm{e^{\sigma\phi}w}_{L^2(\Gamma)}^2   + \norm{e^{\sigma\phi} \nabla_\Gamma w}_{L^2(\Gamma)}^2 
 \lesssim  \sigma^3 \norm{e^{\sigma\phi}w}_{L^2(Q_+)}^2  \\ +\sigma \norm{e^{\sigma\phi} \nabla w}_{L^2(Q_+)}^2 + \norm{e^{\sigma  \phi} (\square + 2\E  \cdot \nabla +f )w}^2_{L^2(Q_+)} + \sigma^2 \norm{e^{\sigma\phi}w}_{L^2(\Sigma_+)}^2   + \norm{e^{\sigma\phi} \nabla w}_{L^2(\Sigma_+)}^2.
\end{multline*}
Combining this estimate with \eqref{eq:carl_1} gives
\begin{multline} \label{eq:carl_3}
\sigma^2 \norm{e^{\sigma\phi}w}_{L^2(\Gamma)}^2   + \norm{e^{\sigma\phi} \nabla_\Gamma w}_{L^2(\Gamma)}^2 + \sigma \int_{ \Gamma } e^{2\sigma \phi} F^j (x,\sigma w, \nabla w)\nu_j \, dS \\   
   \lesssim  \norm{e^{\sigma  \phi} (\square + 2\E  \cdot \nabla  +f)w}^2_{L^2(Q_+)}  + \sigma^3 \norm{e^{\sigma\phi}w}_{L^2(\Sigma_+ \cup \Gamma_T)}^2   +  \sigma \norm{e^{\sigma\phi} \nabla w}_{L^2(\Sigma_+ \cup \Gamma_T)}^2.
  \end{multline}
  For the terms over $\Gamma_T$, using the energy estimate in Lemma \ref{lemma:energy_T} one has
 \begin{align} 
 \nonumber \norm{w}_{L^2(\Gamma_T)}^2   + \norm{ \nabla w}_{L^2(\Gamma_T)}^2    
   &\lesssim \norm{w}_{H^1(\Gamma)}^2  +  \norm{ (\square + 2\E  \cdot \nabla + f )w}^2_{L^2(Q_+)}  +  \norm{w}_{L^2(\Sigma_+ )}^2   + \norm{ \nabla w}_{L^2(\Sigma_+ )}^2 \\
\label{eq:carl_3B} &\lesssim \norm{w}_{H^1(\Gamma)}^2  +  \norm{\A }_{L^2(\Gamma )}^2 +  \norm{q}_{L^2(\Gamma )}^2 +  \norm{w}_{L^2(\Sigma_+ )}^2  + \norm{ \partial_\nu w}_{L^2(\Sigma_+ )}^2,
  \end{align}
where in the last line  we have used \eqref{eq:condw+w-}, which implies that 
\[\norm{ (\square + 2\E  \cdot \nabla  +f)w}^2_{L^2(Q_+)} \lesssim \norm{\A }_{L^2(B )}^2 +  \norm{q}_{L^2(B)}^2 \lesssim \norm{\A }_{L^2(\Gamma )}^2 +  \norm{q}_{L^2(\Gamma )}^2, \]
 since $\A $ and $q$ do not depend on $t$. We now multiply \eqref{eq:carl_3B} by  $e^{\sigma \sup_{\Gamma_T} \phi}$. Then by Lemma \ref{lemma:weight} one can  use in the right hand side that $\sup_{\Gamma_T} \phi \le \inf_\Gamma \phi -\delta$ for some $\delta>0$. This gives
\begin{multline*}
\sigma^3 \norm{e^{\sigma\phi}w}_{L^2(\Gamma_T)}^2    +  \sigma \norm{e^{\sigma\phi} \nabla w}_{L^2(\Gamma_T)}^2   
 \lesssim  \sigma^3 e^{-2\delta \sigma} ( \norm{e^{\sigma\phi}w}_{H^1(\Gamma)}^2  +   \norm{e^{\sigma\phi}\A }_{L^2(\Gamma )}^2 + \norm{e^{\sigma\phi}q}_{L^2(\Gamma )}^2) \\ 
 +  \sigma^3 e^{2 \sigma \sup_{\Gamma_T }\phi }(  \norm{e^{\sigma\phi}w}_{L^2(\Sigma_+)}^2   + \norm{e^{\sigma\phi} \partial_\nu w}_{L^2(\Sigma_+)}^2).
\end{multline*}
Inserting this estimate in \eqref{eq:carl_3}, and taking $\sigma$ large enough to absorb the term $\sigma^3  e^{-2\delta \sigma}  \norm{e^{\sigma\phi}w}_{H^1(\Gamma)}^2 $ on the left, yields
\begin{multline} \label{eq:carl_4}
\sigma^2 \norm{e^{\sigma\phi}w}_{L^2(\Gamma)}^2   + \norm{e^{\sigma\phi} \nabla_\Gamma w}_{L^2(\Gamma)}^2 + \sigma \int_{ \Gamma } e^{2\sigma \phi} F^j (x,\sigma w, \nabla w)\nu_j \, dS   
   \lesssim  \norm{e^{\sigma  \phi} (\square + 2\E  \cdot \nabla  +f)w}^2_{L^2(Q_+)} \\ 
   + \sigma^3 e^{-2\delta \sigma}  \left (\norm{e^{\sigma\phi}\A }_{L^2(\Gamma )}^2 +\norm{e^{\sigma\phi}q}_{L^2(\Gamma )}^2 \right ) + \sigma^3 e^{c \sigma} \left (  \norm{w}_{L^2(\Sigma_+)}^2   + \norm{ \nabla w}_{L^2(\Sigma_+)}^2 \right).
  \end{multline}
By \eqref{eq:condw+w-2} we can relate the left hand side of the previous inequality with $h$ using that
\[ \norm{ e^{\sigma  \phi} h  }_{L^2(\Gamma)}^2   \lesssim  \norm{ e^{\sigma  \phi} (Z - \omega  \cdot \E )w}_{L^2(\Gamma)}^2 , \]
and that for $\sigma$ large enough 
  \[  \norm{ e^{\sigma  \phi} (Z - \omega  \cdot \E )w}_{L^2(\Gamma)}^2   \lesssim \sigma^2 \norm{e^{\sigma\phi}w}_{L^2(\Gamma)}^2   + \norm{e^{\sigma\phi} Z w}_{L^2(\Gamma)}^2  \le \sigma^2 \norm{e^{\sigma\phi}w}_{L^2(\Gamma)}^2   + \norm{e^{\sigma\phi} \nabla_\Gamma w}_{L^2(\Gamma)}^2. \]
  Inserting these two estimates in \eqref{eq:carl_4} gives
  \begin{multline} \label{eq:carl_5}
\norm{ e^{\sigma  \phi} h  }_{L^2(\Gamma)}^2 + \sigma \int_{ \Gamma } e^{2\sigma \phi} F^j (x,\sigma w, \nabla w)\nu_j  \, dS  
   \lesssim  \norm{e^{\sigma  \phi} (\square + 2\E  \cdot \nabla  +f)w}^2_{L^2(Q_+)} \\     + \sigma^3 e^{-2\delta \sigma}  \left (\norm{e^{\sigma\phi}\A }_{L^2(\Gamma )}^2 +\norm{e^{\sigma\phi}q}_{L^2(\Gamma )}^2 \right )  + \sigma^3 e^{c \sigma} \left (   \norm{w}_{L^2(\Sigma_+)}^2   + \norm{ \nabla w}_{L^2(\Sigma_+)}^2 \right).
  \end{multline}
By \eqref{eq:condw+w-} and Lemma \ref{lemma:weight} we have
\begin{multline*}
 \norm{e^{\sigma  \phi} (\square +2 \E  \cdot \nabla  +f)w}^2_{L^2(Q_+)} \lesssim  \norm{e^{\sigma  \phi} \A }^2_{L^2(Q_+)} +\norm{e^{\sigma  \phi} q}^2_{L^2(Q_+)}  \\
 \lesssim \kappa (\sigma) \left ( \norm{e^{\sigma  \phi} \A }^2_{L^2(\Gamma)} + \norm{e^{\sigma  \phi} q}^2_{L^2(\Gamma)} \right ),
 \end{multline*}
  where $\kappa(\sigma) \to 0 $ as $\sigma \to \infty $. Using this in  \eqref{eq:carl_5} yields
    \begin{multline*} 
\norm{ e^{\sigma  \phi} h  }_{L^2(\Gamma)}^2 + \sigma \int_{ \Gamma } e^{2\sigma \phi} F^j (x,\sigma w, \nabla w)\nu_j \,  dS \\   
   \lesssim   \gamma (\sigma) \left ( \norm{e^{\sigma  \phi} \A }^2_{L^2(\Gamma)} + \norm{e^{\sigma  \phi} q}^2_{L^2(\Gamma)} \right )+ \sigma^3 e^{c \sigma} \left ( \norm{w}_{L^2(\Sigma_+)}^2   + \norm{ \nabla w}_{L^2(\Sigma_+)}^2 \right),
  \end{multline*}
  where $\gamma (\sigma ) := \kappa(\sigma) + \sigma^3 e^{-2\delta \sigma} $ also satisfies $\gamma (\sigma) \to 0 $ as $\sigma \to \infty $. We now use  that $\phi(y,z,z) = e^{\lambda |x-v|^2} = \phi_0(x)$, which means that we can  write the previous estimate changing the $L^2(\Gamma)$ norms to  $L^2(B)$ norms (this is possible since the integrands do not depend on $t$ any more). Also, \eqref{eq:tangential}
  and \eqref{eq:F} imply that 
\[F^j(x,\sigma w, \nabla w)\nu_j |_{\Gamma} = F^j(x,\sigma w, \nabla_\Gamma w)\nu_j |_{\Gamma} ,\]
that is, $F^j\nu_j$ on $\Gamma$ only depends on the part of the gradient of $w$ tangential to $\Gamma$. 
    Applying these observations and rewriting the previous estimate with $w = w_+$ and $\A  = \A_+$, yields
      \begin{multline} \label{eq:carl_6}
\norm{ e^{\sigma  \phi_0} h_+}_{L^2(B)}^2 + \sigma \int_{ \Gamma } e^{2\sigma \phi_0} F^j (x,\sigma w_+, \nabla_\Gamma w_+)\nu_j  \, dS \\   
   \lesssim   \gamma (\sigma) \left ( \norm{e^{\sigma  \phi_0}\A_+}_{L^2(B)}^2 + \norm{e^{\sigma  \phi_0}q_+}_{L^2(B)}^2 \right) + \sigma^3 e^{c \sigma} \left(   \norm{w_+}_{L^2(\Sigma_+)}^2   + \norm{ \nabla w_+}_{L^2(\Sigma_+)}^2 \right),
  \end{multline} 
  Fix $\nu$  to be the downward pointing unit normal to $\Gamma$, so $\nu$ is an exterior normal for $Q_+$. An analogous argument in $Q_-$ yields the estimate 
        \begin{multline} \label{eq:carl_7}
\norm{ e^{\sigma  \phi_0} h_-}_{L^2(B)}^2 - \sigma \int_{ \Gamma } e^{2\sigma \phi_0} F^j (x,\sigma w_-, \nabla_\Gamma w_-)\nu_j  \, dS \\   
   \lesssim   \gamma (\sigma) \left ( \norm{e^{\sigma  \phi_0}\A_-}_{L^2(B)}^2 + \norm{e^{\sigma  \phi_0}q_-}_{L^2(B)}^2 \right) + \sigma^3 e^{c \sigma} \left(   \norm{w_-}_{L^2(\Sigma_-)}^2   + \norm{ \nabla w_-}_{L^2(\Sigma_-)}^2 \right),
  \end{multline} 
  where the minus sign in the boundary term comes from the fact that the outward pointing normal at  $\Gamma$ seen as part of the boundary of $Q_-$ is the opposite to that of the case of $Q_+$.
Now, since $F^j(x,\sigma w, \nabla_\Gamma w)  \nu_j$  is quadratic in  $w$ and $\nabla_\Gamma w$ (and the coefficients are bounded functions), we have that 
\begin{multline*}
 \int_{ \Gamma } e^{2\sigma \phi_0} |F^j (x,\sigma w_+, \nabla_\Gamma w_+) \nu_j- F^j (x,\sigma w_-, \nabla_\Gamma w_-) \nu_j | \, dS \\
  \lesssim \sigma^2 e^{c \sigma} \norm{w_+ - w_-}_{H^1(\Gamma)}( \norm{w_+}_{H^1(\Gamma)} + \norm{ w_-}_{H^1(\Gamma)} )  . 
 \end{multline*}
Therefore, adding \eqref{eq:carl_6} and \eqref{eq:carl_7}  and applying the previous estimate gives the desired result.
\end{proof}

Lemma \ref{lemma:w+w-} is going to be used for two different purposes and, as a consequence, it will be convenient to restate the estimate  in a more specific way. We do this in the following couple of lemmas.

\begin{Lemma} \sl  \label{lemma:carl_1}
 Let  $T>7$. Let $\E_\pm$, $\A_\pm$, $f_\pm$, $q_\pm$, and $\gamma (\sigma)$ be as in Lemma \ref{lemma:w+w-}, and suppose that $h_\pm = q_\pm$. 
Assume that for a fixed $\omega \in S^{n-1}$ there exist $w_\pm \in H^2(Q_\pm)$ such that \eqref{eq:condw+w-} and \eqref{eq:condw+w-2} hold with  $w_+ = w_-$ on $\Gamma$.  Assume also that $w_\pm|_{\Sigma_\pm} = 0$ and $\partial_\nu w_\pm|_{\Sigma_\pm} = 0 $. Then one has that
  \begin{equation} \label{eq:carlfinal_1}
 \sum_{\pm} \norm{ e^{\sigma  \phi_0} q_\pm}_{L^2(B)}^2 \lesssim   \gamma (\sigma) \sum_{\pm}   \norm{e^{\sigma\phi_0} \A_\pm}_{L^2(B)}^2,
  \end{equation}
  for $\sigma >0$ large enough.
  \end{Lemma}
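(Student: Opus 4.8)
The plan is to obtain Lemma \ref{lemma:carl_1} as a direct specialization of Lemma \ref{lemma:w+w-}, once the estimate there is extended from classical to $H^2$ regularity and the superfluous boundary terms are dropped. Concretely: (i) extend \eqref{eq:est_0Carl} to $w_\pm \in H^2(Q_\pm)$; (ii) substitute $h_\pm = q_\pm$ and use the hypotheses $w_+ = w_-$ on $\Gamma$ and $w_\pm|_{\Sigma_\pm} = \partial_\nu w_\pm|_{\Sigma_\pm} = 0$, which kill the first and the last terms on the right-hand side of \eqref{eq:est_0Carl}; (iii) absorb the remaining $q$-term on the right into the left for $\sigma$ large.

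For step (i) I would inspect the proof of Lemma \ref{lemma:w+w-} and note that the pointwise hypotheses \eqref{eq:condw+w-} and \eqref{eq:condw+w-2} enter only through the two $L^2$ bounds $\norm{(\square + 2\E_\pm\cdot\nabla + f_\pm)w_\pm}_{L^2(Q_\pm)} \lesssim \norm{\A_\pm}_{L^2(B)} + \norm{q_\pm}_{L^2(B)}$ and $\norm{h_\pm}_{L^2(\Gamma)} \lesssim \norm{(\partial_t+\partial_z-\omega\cdot\E_\pm)w_\pm}_{L^2(\Gamma)}$, both of which remain valid almost everywhere for $w_\pm \in H^2(Q_\pm)$; all remaining ingredients — the Carleman estimate of Proposition \ref{prop:carleman_estimate}, the energy estimates of Lemmas \ref{lemma:energy_T} and \ref{lemma:energy_estimate_gamma}, and the trace and Cauchy--Schwarz manipulations — are continuous with respect to the $H^2(Q_\pm)$-norm (using the trace maps $H^2(Q_\pm)\hookrightarrow H^1(\Gamma)$, $H^2(Q_\pm)\hookrightarrow H^1(\Sigma_\pm)$ and $\partial_\nu\colon H^2(Q_\pm)\to L^2(\Sigma_\pm)$). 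Since $C^\infty(\overline{Q}_\pm)$ is dense in $H^2(Q_\pm)$, the Carleman and energy estimates pass to $H^2$, and hence so does \eqref{eq:est_0Carl}, now for any $w_\pm \in H^2(Q_\pm)$ satisfying \eqref{eq:condw+w-} and \eqref{eq:condw+w-2} a.e.

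For steps (ii)--(iii): with $h_\pm = q_\pm$, the hypothesis $w_+ = w_-$ on $\Gamma$ gives $\norm{w_+ - w_-}_{H^1(\Gamma)} = 0$, while $w_\pm|_{\Sigma_\pm} = 0$ and $\partial_\nu w_\pm|_{\Sigma_\pm} = 0$ give $\norm{w_\pm}_{H^1(\Sigma_\pm)} = 0$ and $\norm{\partial_\nu w_\pm}_{L^2(\Sigma_\pm)} = 0$, so \eqref{eq:est_0Carl} collapses to
\[
\sum_\pm \norm{e^{\sigma\phi_0}q_\pm}_{L^2(B)}^2 \le C_0\,\gamma(\sigma)\Big(\sum_\pm\norm{e^{\sigma\phi_0}\A_\pm}_{L^2(B)}^2 + \sum_\pm\norm{e^{\sigma\phi_0}q_\pm}_{L^2(B)}^2\Big)
\]
for $\sigma$ large, where $C_0$ is the implicit constant of Lemma \ref{lemma:w+w-}. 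Since $\gamma(\sigma)\to 0$ as $\sigma\to\infty$, I would then fix $\sigma$ large enough that $C_0\gamma(\sigma)\le \tfrac12$, absorb $C_0\gamma(\sigma)\sum_\pm\norm{e^{\sigma\phi_0}q_\pm}_{L^2(B)}^2$ into the left-hand side, and multiply by $2$ to obtain \eqref{eq:carlfinal_1}.

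The argument carries no real difficulty: it is the combination of a specialization of Lemma \ref{lemma:w+w-} with the standard large-parameter absorption device. The only point that needs (routine) care is the density argument in step (i), which is genuinely required because in the applications in Section \ref{sec:2n} the functions $w_\pm$ will be built from differences of weak solutions of wave equations and will only be known to lie in $H^2(Q_\pm)$.
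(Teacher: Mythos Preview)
Your argument is correct and is essentially the paper's approach: the paper simply declares the proof ``immediate from Lemma \ref{lemma:w+w-}'', leaving the vanishing of the boundary terms and the absorption of $\gamma(\sigma)\sum_\pm\norm{e^{\sigma\phi_0}q_\pm}_{L^2(B)}^2$ as obvious. One small remark: your claim that the $H^2$ density step is ``genuinely required'' because the applications in Section~\ref{sec:2n} only yield $H^2$ functions is not accurate---in the paper the $w_\pm$ are built from the $u_{k,\pm j}$ and $v_{k,\pm n}$ of Propositions~\ref{prop:sol_smooth_1}--\ref{prop:sol_smooth_2}, which are $C^2$ on $\overline{Q}_\pm$, so the density argument, while correct, is not actually needed.
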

\noindent  The proof is immediate from Lemma \ref{lemma:w+w-}.
  
  \begin{Lemma} \sl  \label{lemma:carl_2}
 Let $T>7$. Also, let $\A_\pm$,  $q_\pm$, and $\gamma (\sigma)$ be as in Lemma \ref{lemma:w+w-}. 
\\
Suppose that for a fixed $\omega \in S^{n-1}$ there exist $w_\pm$,  $\E_\pm$, and $f_\pm$ satisfying the assumptions of Lemma \ref{lemma:w+w-},  and such that \eqref{eq:condw+w-} and \eqref{eq:condw+w-2} hold with  $w_+ = w_-$ on $\Gamma$ and $h_\pm = \omega \cdot \A_\pm$. Assume also  that $w_\pm|_{\Sigma_\pm} = 0$ and $\partial_\nu w_\pm|_{\Sigma_\pm} = 0 $. Then one has that
  \begin{equation} \label{eq:carlfinal_2_0}
 \sum_{\pm} \norm{ e^{\sigma  \phi_0} \omega \cdot \A_\pm }_{L^2(B)}^2 \lesssim  \gamma (\sigma) \sum_{\pm}   \left( \norm{e^{\sigma\phi_0} q_\pm}_{L^2(B)}^2 +\norm{e^{\sigma\phi_0} \A_\pm}_{L^2(B)}^2 \right) .
  \end{equation}
Moreover, let $\{e_j\}_{1\le j\le n}$ be an orthonormal basis  of $\R^n$, and let $J \subseteq \{1,\dots,n\}$ be the set of natural numbers satisfying that at least one of the components $e_j \cdot \A_+ $ and $e_j \cdot \A_- $ does not vanish completely in $\R^n$.
 Suppose that for each $j\in J$, estimate \eqref{eq:carlfinal_2_0} holds with  $\omega = e_j$. Then one has that
  \begin{equation} \label{eq:carlfinal_2}
 \sum_{\pm} \norm{ e^{\sigma  \phi_0}  \A_\pm }_{L^2(B)}^2 \lesssim  \gamma (\sigma) \sum_{\pm}   \norm{e^{\sigma\phi_0} q_\pm}_{L^2(B)}^2 ,
  \end{equation}
for $\sigma >0$ large enough.
\end{Lemma}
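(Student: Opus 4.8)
The plan is to obtain both inequalities directly from Lemma~\ref{lemma:w+w-}: the estimate \eqref{eq:carlfinal_2_0} by specializing its hypotheses, and \eqref{eq:carlfinal_2} by decomposing $\A_\pm$ along the orthonormal basis and running a short absorption argument.

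To prove \eqref{eq:carlfinal_2_0}, I would apply Lemma~\ref{lemma:w+w-} with $h_\pm := \omega\cdot\A_\pm$. By assumption $w_\pm$, $\E_\pm$, $f_\pm$ satisfy \eqref{eq:condw+w-} and \eqref{eq:condw+w-2} (the latter precisely with this choice of $h_\pm$), so estimate \eqref{eq:est_0Carl} is available. The hypothesis $w_+ = w_-$ on $\Gamma$ makes the term $\sigma^3 e^{c\sigma}\norm{w_+ - w_-}_{H^1(\Gamma)}(\norm{w_+}_{H^1(\Gamma)} + \norm{w_-}_{H^1(\Gamma)})$ on the right of \eqref{eq:est_0Carl} vanish, and the boundary conditions $w_\pm|_{\Sigma_\pm} = 0$, $\partial_\nu w_\pm|_{\Sigma_\pm} = 0$ kill the term $\sigma^3 e^{c\sigma}\sum_\pm(\norm{w_\pm}_{H^1(\Sigma_\pm)}^2 + \norm{\partial_\nu w_\pm}_{L^2(\Sigma_\pm)}^2)$. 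What remains is exactly \eqref{eq:carlfinal_2_0}.

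For \eqref{eq:carlfinal_2}, I would write each $\A_\pm$ in the basis $\{e_j\}$, so that $\abs{\A_\pm}^2 = \sum_{j=1}^n \abs{e_j\cdot\A_\pm}^2$ pointwise, and hence
\[
\sum_{\pm}\norm{e^{\sigma\phi_0}\A_\pm}_{L^2(B)}^2 = \sum_{j=1}^n \sum_{\pm}\norm{e^{\sigma\phi_0}\,(e_j\cdot\A_\pm)}_{L^2(B)}^2 = \sum_{j\in J}\sum_{\pm}\norm{e^{\sigma\phi_0}\,(e_j\cdot\A_\pm)}_{L^2(B)}^2,
\]
the last step because for $j\notin J$ both $e_j\cdot\A_+$ and $e_j\cdot\A_-$ vanish identically. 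By hypothesis \eqref{eq:carlfinal_2_0} holds with $\omega = e_j$ for every $j\in J$; summing these (at most $n$) inequalities and combining with the identity above gives
\[
\sum_{\pm}\norm{e^{\sigma\phi_0}\A_\pm}_{L^2(B)}^2 \le C\gamma(\sigma)\Big(\sum_{\pm}\norm{e^{\sigma\phi_0}q_\pm}_{L^2(B)}^2 + \sum_{\pm}\norm{e^{\sigma\phi_0}\A_\pm}_{L^2(B)}^2\Big),
\]
where $C$ is allowed to absorb the number of elements of $J$. Since $\gamma(\sigma)\to 0$ as $\sigma\to\infty$, I would then fix $\sigma$ large enough that $C\gamma(\sigma)\le 1/2$ and absorb the $\A$-term on the right into the left-hand side, which produces \eqref{eq:carlfinal_2}.

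The argument is essentially bookkeeping, with the substantive work already contained in Lemma~\ref{lemma:w+w-}; the only points needing care are that the components of $\A_\pm$ indexed outside $J$ drop out of the sum (so only finitely many instances of \eqref{eq:carlfinal_2_0} are invoked and the constant remains independent of $\sigma$), and that the absorption step is legitimate only in the large-$\sigma$ regime in which the lemma is stated. I do not expect a genuine obstacle beyond tracking these constants.
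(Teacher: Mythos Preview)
Your proposal is correct and follows essentially the same approach as the paper: deduce \eqref{eq:carlfinal_2_0} directly from Lemma~\ref{lemma:w+w-} using the vanishing of the $\Gamma$ and $\Sigma_\pm$ boundary terms, then sum the instances for $j\in J$, use the orthonormal decomposition $|\A_\pm|^2=\sum_{j\in J}|e_j\cdot\A_\pm|^2$, and absorb via $\gamma(\sigma)\to 0$. Your write-up is in fact slightly more explicit than the paper's on why each boundary term drops out.
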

\begin{proof}
Estimate \eqref{eq:carlfinal_2_0} follows immediately from Lemma \ref{lemma:w+w-} under the assumptions in the statement. Let us prove \eqref{eq:carlfinal_2}.
 By assumption for each $j \in J$ we have
 \begin{equation} \label{eq:carlfinal2_primitive}
 \sum_{\pm} \norm{ e^{\sigma  \phi_0} e_j \cdot \A_\pm }_{L^2(B)}^2 \lesssim  \gamma (\sigma) \sum_{\pm}   \left(  \norm{e^{\sigma\phi_0} \A_\pm}_{L^2(B)}^2 + \norm{e^{\sigma\phi_0} q_\pm}_{L^2(B )}^2 \right).
  \end{equation}
 But notice that, by definition, $e_j \cdot \A_\pm = 0$ in $\R^n$ if $j \notin J$, so that $\A_\pm = \sum_{j\in J} (e_j \cdot \A_\pm)e_j $. This means that adding \eqref{eq:carlfinal2_primitive} for all $j\in J$ gives
  \begin{equation*} 
 \sum_{\pm} \norm{ e^{\sigma  \phi_0}  \A_\pm }_{L^2(B)}^2 \lesssim  \gamma (\sigma) \sum_{\pm}   \left(  \norm{e^{\sigma\phi_0} \A_\pm}_{L^2(B)}^2 + \norm{e^{\sigma\phi_0} q_\pm}_{L^2(B )}^2 \right).
  \end{equation*}
  Then, using that $\gamma (\sigma) \to 0$ as $\sigma \to \infty$, to finish the proof is enough to take $\sigma$ large enough in order to absorb in the left hand side the first term on the right.
\end{proof}

We have already seen in Section \ref{sec:direct_scat_prob} that the restriction to $\Gamma$ of the solutions of \eqref{eq:wave_1} and \eqref{eq:wave_2} is in a certain sense related to the coefficients of the perturbation. In the next section we use this fact to construct  appropriate functions $w_+$ and $w_-$ so that estimates \eqref{eq:carlfinal_1} and \eqref{eq:carlfinal_2} hold simultaneously with $\A_\pm$ and $q_\pm$ being quantities related to the differences of potentials $\A_1 - \A_2 $ and $q_1 - q_2$. This will yield the proofs of Theorems  \ref{thm:2n_magnetic} and \ref{thm:2n_drift_nogauge}, since the function $\gamma(\sigma)$ in the previous lemmas goes to zero when $\sigma \to \infty$.

\section{Proof of the uniqueness theorems with 2{\it n} measurements} 
\label{sec:2n}

With  Lemmas \ref{lemma:carl_1} and \ref{lemma:carl_2}  we can finally prove the uniqueness results, Theorem \ref{thm:2n_magnetic} and \ref{thm:2n_drift_nogauge}. 
Both theorems are  stated in terms of pairs of measurements, one for each direction $\pm e_j$.  Due to this fact, it is  convenient to give an explicit expression in the same coordinate system for solutions of \eqref{eq:wave_1} and \eqref{eq:wave_2} that correspond to the opposite directions $ \pm \omega_0$, where $\omega_0 \in S^{n-1}$ is fixed. 

Let $x \in \R^n$, and write $x = (y,z)$, where $y\in \R^{n-1}$ and $z= x\cdot \omega_0$.   We want to express in these coordinates the functions $U_{H,\pm}(y,z,t) := U_{H}(y,z,t; \pm\omega_0)$  given by Proposition \ref{prop:sol_smooth_1} when $\omega= \omega_0$ and when $\omega= -\omega_0$. 
By Proposition \ref{prop:sol_smooth_1}, in this coordinate system the solutions satisfy that   $U_{H,\pm}(y,z,t)  = u_\pm(y,z,t)H(t- (\pm z))$ with
\begin{equation} \label{eq:struct_1}
\begin{cases}
(\partial_t^2 + L_{\W,V}) u_\pm =0  \qquad \text{in }\; \; \left\{ t > \pm z \right\}, \\ 
u_\pm(y,z, \pm z) = e^{ \psi_{\pm}(y, z)} 
\end{cases}
\end{equation}
where  
\begin{equation*} 
 \psi_{\pm}(y, z) := \pm \int_{-\infty}^{\pm z}  \omega_0 \cdot\W(y, \pm s) \,ds.
\end{equation*}
In the case of Proposition \ref{prop:sol_smooth_2}, it is convenient to state the results in the case where the Hamiltonian is written in the form   \eqref{eq:hamilt_Aq}. This is easily obtained  making the change $\W = -i\A$ and $V= \A^2 + D\cdot \A + q$ in the previous results. Therefore, using the same coordinates as before, the solutions $U_{\delta,\pm}(y,z,t):= U_{\delta}(y,z,t;\pm \omega_0)$ of \eqref{eq:wave_2}  given by Proposition \ref{prop:sol_smooth_2}  satisfy
\[U_{\delta,\pm} (y,z,t)  = v_\pm(y,z,t)H(t- \pm z) + e^{\psi_\pm (y,z)} \delta(t- \pm z)  \quad \text{in} \; \R^n,\] 
where $ \psi_\pm (y,z) :=  \pm (-i)   \int_{-\infty}^{\pm z}  \omega_0 \cdot \A(y, \pm s) \,ds$,
and $v_\pm$ satisfies
\begin{equation} \label{eq:struct_2}
\begin{cases}
(\partial_t^2 + L_{\W,V}) v_\pm =0  \qquad \text{in}\; \left\{ t> \pm z \right\}, \\ 
v_\pm(y,z, \pm z) =  \frac{1}{2} e^{  \psi_\pm (y,z)} \int_{-\infty}^{\pm z} \left [\nabla \cdot \left( i\A + \nabla \psi_\pm \right ) + \left( i\A + \nabla \psi_\pm \right )^2 -q \right ](y, \pm  s) \, ds.
\end{cases}
\end{equation}

Having these explicit coordinate expressions, we now prove Theorems \ref{thm:2n_magnetic} and \ref{thm:2n_drift_nogauge}. We start with the second one which is the simplest, since the zeroth order term $V$ is fixed. The proof consists in the construction of two appropriate functions $w_+$ and $w_-$ using the solutions $U_{k, \pm j}$ of \eqref{eq:wave_1} in order to apply the results introduced in the previous section. For a fixed direction $\omega = e_j$ and $k=1,2$, we use $U_{k, + j}$ to construct  $w_+$ and, with a certain gauge change, we use $U_{k, - j}$ to construct  $w_-$. The gauge change is a technical requirement necessary to have $w_+ = w_-$ in $\Gamma$, as assumed in Lemmas \ref{lemma:carl_1} and \ref{lemma:carl_2}. Notice that the reason for this assumption comes from the fact that one wants to get rid of the  first term on the right hand side of the estimate \eqref{eq:est_0Carl} which is large when the parameter $\sigma$ is large.

\begin{proof}[Proof of Theorem \ref{thm:2n_drift_nogauge}]

By Lemma \ref{lemma:equivalence_data} we know that it is completely equivalent to consider that the $U_{k, \pm j}$ are solutions of the initial value problem
\begin{equation*} 
( \partial_t^2   - \Delta +  2\W_k \cdot \nabla + V) U_{k, \pm j} = 0, \; \, \text{in}\; \R^{n+1}, \quad U_{k, \pm j}|_{\left\{t<-1\right\}}= H (t- \pm x_j).
\end{equation*}
 instead of the IVP \eqref{eq:drift2n_wave}. Here it is convenient to work with $H$-waves instead of $\delta$-waves since, as mentioned previously, the former have boundary values on $\Gamma$ that are independent of $V$ (see Proposition \ref{prop:sol_smooth_1}).
 
 Fix  $1 \le j \le n$.  In this proof we use  coordinates $x= (y,z)$ in $\R^n$, where $z= x_j$ and $y \in \R^{n-1}$. Let $k=1,2$. By the previous discussion, we know that the $U_{k,\pm j}$ functions satisfy that
\begin{equation} \label{eq:ust}
 U_{k,\pm j}(y,z,t) = u_{k,\pm j}(y,z,t)H(t-\pm z) ,
 \end{equation}
where $u_{k,\pm j}$ is given by \eqref{eq:struct_1} with $\W =\W_k$,  $\omega = \pm e_j$, and $\omega_0 = e_j$. Writing this in detail we obtain that
\begin{equation}
\begin{cases}
(\square + 2\W_k \cdot \nabla + V)u_{k,\pm j} =0 \quad \text{in} \quad \{t> \pm z\}, \\
u_{k,\pm j}(y,z,\pm z) = e^{\int_{-\infty}^{\pm z} (\pm e_j) \cdot\W_k(y,\pm s) \, ds}.
\end{cases}
\label{eq:drift_2n_ivp}
\end{equation}
By assumption we  have that $U_{1,\pm j} = U_{2,\pm j}$ on $\Sigma_\pm = \Sigma \cap \{ t \ge   \pm z\}$, so that
\begin{equation} \label{eq:W_assumption}
u_{1,\pm j} = u_{2, \pm j} \quad \text{on} \quad  \Sigma_\pm,
\end{equation}
 and hence, in particular $u_{1,j} = u_{2, j}$   in $  \Sigma_+ \cap \Gamma = \Sigma \cap \{ t =    z\}$. From this  and \eqref{eq:drift_2n_ivp} we get that there is a function $\mu_j : \R^{n-1} \to \C$ such that
\begin{equation} \label{eq:mu}
\mu_j(y): =  \int_{-\infty}^{\infty} e_j \cdot \W_1(y, s) \, ds = \int_{-\infty}^{\infty} e_j \cdot \W_2(y, s) \, ds  .
\end{equation}
We now define $w_+ := u_{1,j} - u_{2,j}$ in $\ol{Q}_+$. With this  choice Proposition \ref{prop:sol_smooth_1} yields that $w_+ \in C^2(\ol{Q}_+)$, and \eqref{eq:drift_2n_ivp} that
\begin{equation*}
w_+(y,z, z) = e^{\int_{-\infty}^z e_j \cdot \W_1(y,s) \, ds} - e^{\int_{-\infty}^z e_j \cdot \W_2(y,s) \, ds}.
\end{equation*}

 To apply Lemma \ref{lemma:w+w-} and Lemma \ref{lemma:carl_2} we need also to define an appropriate function $w_-$ in $\ol{Q}_-$. To obtain a useful choice  we now consider the solutions $u_{k,-j}$ of \eqref{eq:drift_2n_ivp}
and we take 
\[w_-(y,z,t) = e^{\mu_{j}(y)}(u_{1,-j}(y,z,-t) -u_{2,-j}(y,z,-t)).\]
 Then   $w_- \in C^2(\ol{Q}_-)$ as desired. Also, if $t=z$
\begin{equation}  \label{eq:vanishing_cond_W_proof}
\begin{aligned}
 w_-(y,z,z) &= e^{\mu_{j}(y)} u_{1,-j}(y,z,-z) - e^{\mu_{j}(y)} u_{2,-j}(y,z,-z)\\
 &=  e^{   \mu_{j}(y) + \int_{-\infty}^{-z} (-e_j) \cdot \W_1(y,-s) \, ds} - e^{  \mu_{j}(y) + \int_{-\infty}^{-z} (-e_j) \cdot \W_2(y,-s) \, ds} \\ 
 &= e^{ \mu_{j}(y) -   \int_{z}^{\infty} e_j \cdot \W_1(y,s) \, ds} - e^{ \mu_{j}(y) - \int_{z}^{\infty} e_j \cdot \W_2(y,s) \, ds}  = w_+(y,z,z).
\end{aligned}
\end{equation}
Therefore $w_+$ satisfies
\begin{equation} \label{eq:w+s}
\begin{alignedat}{2}
(\square + 2\W_1 \cdot \nabla + V)w_+ &= 2(\W_2 -\W_1) \cdot \nabla u_{2,j}  & \quad &\text{in} \quad Q_+, \\
(\partial_t + \partial_z -e_j \cdot \W_1)w_+ &=  e_j \cdot (\W_1-\W_2) u_{2,j} &  \quad  &\text{in} \quad \Gamma ,
\end{alignedat}
\end{equation}
and $w_- $ satisfies
\begin{equation*}
\begin{alignedat}{2}
&(\square + 2 (\W_1 + \nabla \mu_{j}) \cdot \nabla + f_-) w_- = 2e^{\mu_{j}} (\W_2 -\W_1) \cdot \nabla u_{2,-j}(x,-t)& \quad &\text{in} \quad Q_-, \\
&(\partial_t + \partial_z -e_j \cdot \W_1) w_- =  e^{\mu_{j}} \, e_j \cdot (\W_1-\W_2) u_{2,-j}(y,z,-z) & \quad &\text{in} \quad \Gamma,
\end{alignedat}
\end{equation*}
where  $f_- = V-|\nabla \mu_{j}|^2 + \Delta \mu_{j} -2\W_1 \cdot \nabla \mu_{j}$. 

Observe  that $e^{\mu_{j}(y)}$, $u_{2,\pm j}(x,\pm t)$ and $|\nabla u_{2,\pm j}(x,\pm t) |$  are bounded functions  in $\ol{Q}_\pm$. Also, we have  that $|u_{2,\pm j}(x, \pm t)| \gtrsim 1$ in $\Gamma$ by \eqref{eq:drift_2n_ivp}. Therefore it holds that
\begin{equation*}
\begin{alignedat}{2}
|(\square + 2 \E_\pm \cdot \nabla + f_\pm )w_\pm| &\lesssim   |\W_2 - \W_1|&  \quad &\text{in} \quad Q_\pm, \\
|(\partial_t + \partial_z -e_j \cdot \E_\pm) w_\pm| &\gtrsim |e_j \cdot (\W_1-\W_2)| & \quad &\text{in} \quad \Gamma,
\end{alignedat}
\end{equation*}
with $f_+ = V$, $\E_+ = \W_1$, $\E_- = \W_1 +\nabla \mu_{j}$, and $f_-$ as before (notice that $e_j \cdot \E_\pm = e_j \cdot  \W_1$ since $e_j \cdot \nabla \mu_j =0$). Hence \eqref{eq:condw+w-} and \eqref{eq:condw+w-2} are satisfied with $\A_\pm  = \W_1-\W_2$, $q_\pm  = 0$, and $h_\pm = e_j \cdot (\W_1-\W_2)$. \\
On the other hand, \eqref{eq:W_assumption} implies that  $w_+|_{\Sigma_+} = w_-|_{\Sigma_-} =  0$. Then, applying Lemma \ref{lemma:normal_estimate} respectively  with  $\alpha = w_+$ and $\chi =1$, or $\alpha = e^{-\mu_j}w_-$ and $\chi= e^{\mu_j}$ yields that $\partial_\nu w_\pm |_{\Sigma_\pm}  =  0$.  \\
Since $w_+ = w_-$ in $\Gamma$ by \eqref{eq:vanishing_cond_W_proof}, the previous observations  imply that all the conditions to apply Lemma \ref{lemma:carl_2} are satisfied, so that \eqref{eq:carlfinal_2_0} holds for each $j\in \{1,\dots,n\}$  with $\A_\pm  = \W_1-\W_2$ and $q_\pm  = 0$. Therefore, the same lemma implies that \eqref{eq:carlfinal_2} must also hold, and this gives
  \begin{equation*} 
  \norm{ e^{\sigma  \phi_0}  (\W_1 - \W_2) }_{L^2(B)} \le 0.
  \end{equation*}
Hence  $\W_1 =\W_2$, since $e^{\sigma  \phi_0(x)}>0$ always. This finishes the proof.
 \end{proof}
 
 We now go to the remaining case, Theorem \ref{thm:2n_magnetic}. In this result the zero order term is not fixed  so that there is gauge invariance (in fact, to simplify the proof it will be convenient fix a specific gauge). This is a harder proof than the previous one since we need to decouple  information about $q$ from the information about $\A$.

\begin{proof}[Proof of Theorem \ref{thm:2n_magnetic}] Let $k=1,2$.
Since making a change of gauge $\A_k -\nabla f_k$ with $f_k$ compactly supported leaves invariant the measured values $U_{k,\pm j}|_{\Sigma_{+ ,\pm j}}$ we can freely choose a suitable $f_k$ in order to simplify the problem.
In fact, to show that $d\A_1= d\A_2$ it is enough to prove that $\A_1=\A_2$ in a specific fixed gauge.
Under the assumptions in the statement  one can always take 
\[
f_k(x) = \int_{-\infty}^{x_n} e_n \cdot \A_k (x_1,\dots,x_{n-1},s) \, ds,
\]
since \eqref{eq:A_condition} implies that $f_k$ must be compactly supported in $B$. With this choice, one has  that 
\[
e_n \cdot (\A_1 -\nabla f_1) =  e_n \cdot (\A_2 - \nabla f_2) = 0,
\]
 in $\R^n$. 
Therefore, the previous arguments show that from now on we can assume without loss of generality that we have fixed  a gauge such that  $e_n \cdot \A_1 =  e_n \cdot \A_2 = 0$ in $\R^n$. 

Fix   $1 \le j \le n-1$. We again take the coordinates in $\R^n$ given by $x= (y,z)$, where $y\in \R^{n-1}$ and $z= x_j$. Let $k=1,2$.
As in the proof of Theorem \ref{thm:2n_drift_nogauge}, by Lemma \ref{lemma:equivalence_data} we can assume that  $U_{k,\pm j}$ satisfies  the IVP
   \begin{equation}  \label{eq:nn_1}
( \partial_t^2 + (D +\A_k )^2 +  q_k )U_{k, \pm j}=0  \; \, \text{in}\; \R^{n+1}, \quad U_{k, \pm j}|_{\left\{t<-1\right\}} = H (t- \pm  x_j),
\end{equation}
instead of \eqref{eq:magnetic2n_wave}.
 By Proposition \ref{prop:sol_smooth_1} we know that the $U_{k,\pm j}$ have the structure described in \eqref{eq:ust} for $1\le j \le n-1 $ where $u_{k,\pm j}$ satisfies \eqref{eq:struct_1} with $\W = -i\A_k$,  $\omega = \pm e_j$, and $\omega_0 = e_j$. Writing this in detail we obtain that 
\begin{equation}
\begin{cases}
(\p_t^2 + (D +\A_k)^2 +q_k)u_{k,\pm j} =0 \quad \text{in} \quad \{t> \pm z\}, \\
u_{k,\pm j}(y,z,\pm z) = e^{-i\int_{-\infty}^{\pm z} (\pm e_j) \cdot \A_k(y,\pm s) \, ds}.
\end{cases}
\label{eq:nn_2}
\end{equation}

By the assumption that $U_{1,\pm j} = U_{2,\pm j}$ on $\Sigma_\pm $  and \eqref{eq:ust}, we have that   \eqref{eq:W_assumption} holds analogously in this case. Specifically, in $ \Sigma_+ \cap \Gamma$ this  implies that there is a function $\mu_j$ such that 
\begin{equation*}
\mu_j(y) = -i\int_{-\infty}^{\infty} e_j \cdot \A_1(y, s) \, ds = -i\int_{-\infty}^{\infty} e_j \cdot \A_2 (y, s) \, ds.
\end{equation*}
We now define $w_+ := u_{1,j} - u_{2,j}$, so that $w_+ \in   C^2(\ol{Q}_+)$ by Proposition  \ref{prop:sol_smooth_1}, and
\begin{equation*}
w_+(y,z, z) = e^{-i\int_{-\infty}^z e_j \cdot \A_1(y,s) \, ds} - e^{-i \int_{-\infty}^z e_j \cdot \A_2(y,s) \, ds}.
\end{equation*}
 To define $w_-$  we  consider the solutions $u_{k,-j}$ of \eqref{eq:nn_2}
and we take 
\[
 w_-(y,z,t) = e^{\mu_j(y)}u_{1,-j}(y,z,-t) - e^{\mu_j(y)} u_{2,-j}(y,z,-t).
 \]
Then  $w_- \in C^2(\ol{Q}_-)$ as desired. Also, for $t=z$
\begin{equation} \label{eq:vanishing_cond_A_proof}
\begin{aligned}
 w_-(y,z,z) &=   e^{  \mu_{j}(y) - i\int_{-\infty}^{-z} (-e_j) \cdot \A_1(y,-s) \, ds} - e^{ \mu_{j}(y) - i\int_{-\infty}^{-z} (-e_j) \cdot \A_2(y,-s) \, ds} \\ 
 &= e^{\mu_{j}(y) +   i\int_{z}^{\infty} e_j \cdot \A_1(y,s) \, ds} - e^{\mu_{j}(y) + i\int_{z}^{\infty} e_j \cdot \A_2(y,s) \, ds}  = w_+(y,z,z).
\end{aligned}
\end{equation}  
Hence,  if we define 
\begin{equation} \label{eq:Vk}
V_k := \A_k^2  +  D \cdot \A_k  +  q_k,
\end{equation}     
$w_+$ satisfies
\begin{equation*}
\begin{alignedat}{2}
(\square -i2 \A_1 \cdot \nabla + V_1) w_+ &= 2(\A_2 -\A_1) \cdot D u_{2,j} + (V_2 -V_1) u_{2,j}& \quad &\text{in} \quad Q_+, \\
(\partial_t + \partial_z +ie_j \cdot \A_1)w_+ &= - i e_j \cdot (\A_1-\A_2) u_{2,j}& \quad &\text{in} \quad \Gamma ,
\end{alignedat}
\end{equation*}
and $w_- $ satisfies
\begin{equation*}
\begin{alignedat}{2}
&e^{-\mu_{j}}(\square - 2(i\A_1 - \nabla \mu_j) \cdot \nabla + \widetilde{V}_1)w_- = \big( 2 (\A_2 -\A_1) \cdot D  + \widetilde{V}_2 -\widetilde{V}_1 \big)  u_{2,-j} (x,-t)&
 \;  &\text{in} \;\; Q_-, \\
&e^{- \mu_{j}} (\partial_t + \partial_z +i e_j \cdot \A_1) w_- = - i e_j \cdot (\A_1-\A_2) u_{2,-j}(y,z,-z)& \quad &\text{in} \;\; \Gamma,
\end{alignedat}
\end{equation*}
where $\widetilde{V}_k = V_k + |\nabla \mu_{j}|^2 + \Delta \mu_{j} + 2i \A_k \cdot \nabla \mu_{j}$. Since  $|\nabla \mu_j|$ is bounded, we have that
\begin{equation*}
|\widetilde{V}_1 -\widetilde{V}_2| \lesssim  |V_1 -V_2| +  |\A_1 -\A_2| .
\end{equation*} 
On the other hand, $u_{2,\pm j}(x, \pm t)$ and $|\nabla u_{2,\pm j}(x, \pm t) |$  are  also bounded in $\ol{Q}_\pm$, and on $\Gamma$ we have that $|u_{2,\pm j} | \gtrsim 1 $ by \eqref{eq:nn_2}. 
 Therefore, if $f_+ = V_1$,  $f_- = \widetilde{V}_1$, $\E_+ = -i\A_1$, and $\E_- = -i\A_1 + \nabla \mu_j$ (notice that $e_j \cdot \nabla \mu_j = 0$), one gets
\begin{equation*}
\begin{alignedat}{2}
|(\square + 2 \E_\pm \cdot \nabla + f_\pm )w_\pm| &\lesssim |\A_1 - \A_2| +|V_1-V_2|&  \quad &\text{in} \quad Q_\pm, \\
|(\partial_t + \partial_z  - e_j \cdot \E_\pm ) w_\pm|  &\gtrsim   |e_j \cdot (\A_1-\A_2)|&  \quad &\text{in} \quad \Gamma,
\end{alignedat}
\end{equation*}
so that \eqref{eq:condw+w-} and \eqref{eq:condw+w-2} are satisfied with $h_\pm =  e_j \cdot (\A_1-\A_2)$,
 \begin{equation} \label{eq:Aq_choice}
 \A_\pm  = \A_1-\A_2, \quad \text{and} \quad q_\pm  = V_1-V_2.
  \end{equation}

As mentioned previously, \eqref{eq:W_assumption} holds analogously in this case, so that  $w_+|_{\Sigma_+} = w_-|_{\Sigma_-} =  0$. 
Again, applying Lemma \ref{lemma:normal_estimate} respectively  with  $\alpha = w_+$ and $\chi =1$, or $\alpha = e^{-\mu_j}w_-$ and $\chi= e^{\mu_j}$ yields that $\partial_\nu w_\pm |_{\Sigma_\pm}  =  0$. \\
 Also,    \eqref{eq:vanishing_cond_A_proof}
 shows that  $w_+ = w_- $ in $\Gamma$. These assertions hold for each $j =1, \dots,n-1$, and the
  $n$-th component $e_n \cdot (\A_1-\A_2)$ vanishes in $\R^n$. This means that the assumptions  of  Lemma \ref{lemma:carl_2} 
   are satisfied  with $J= \{ 1, \dots,n-1 \}$. \\
   As a consequence  \eqref{eq:carlfinal_2_0} holds for each $j\in J$ and hence
    \eqref{eq:carlfinal_2_0} also holds with the  choices established in \eqref{eq:Aq_choice}. This gives
\begin{equation} \label{eq:magnetic_2n_final1}
\norm{ e^{\sigma  \phi_0}   (\A_1 - \A_2) }_{L^2(B)}^2 \lesssim \gamma (\sigma) \norm{e^{\sigma\phi_0} (V_1-V_2)}_{L^2(B)}^2,
  \end{equation}
  where $\gamma (\sigma) \to 0$ as $\sigma \to \infty$.

We now use the information provided by $U_{k,\pm n}$. We use the same coordinates as before, in this case with $z= x_n$. Since  $e_n \cdot \A_1= e_n \cdot \A_2 =0$ in all $\R^n$, we have that
\[
\psi_\pm (y,z) = -(\pm i) \int_{-\infty}^{\pm z}  e_n \cdot \A_k(y,\pm s)  \, ds = 0 .
\]
 Then Proposition \ref{prop:sol_smooth_2} and \eqref{eq:struct_2} yield 
\[ U_{k,\pm n}(y,z,t) =  v_{k,\pm n}(y,z,t)H(t-\pm z) + \delta(t - \pm z), \]
where, using the notation introduced in \eqref{eq:Vk} we have
\begin{equation} \label{eq:hopelast}
\begin{cases}
(\p_t^2 + (D +\A_k)^2 +q_k)v_{k,\pm n} =0 \quad \text{in} \quad \{t> \pm z\}, \\
v_{k,\pm n}(y,z,\pm z) = -\frac{1}{2}\int_{-\infty}^{\pm z}  V_k  (y,\pm s) \, ds.
\end{cases}
\end{equation}
The assumption $U_{1,\pm n}|_{\Sigma_\pm } = U_{2,\pm n}|_{\Sigma_\pm }$ in the statement implies that 
\begin{equation} \label{eq:vcond}
v_{1,\pm n}|_{\Sigma_\pm } = v_{2,\pm n}|_{\Sigma_\pm},
\end{equation} 
and since $\A_k$ and $q_k$ are compactly supported this means that
\begin{equation} \label{eq:boundary_n}
v_{1, n}|_{\Sigma_+ \cap \Gamma } = -\frac{1}{2}\int_{-\infty}^{\infty}  V_1  (y,s) \, ds = -\frac{1}{2}\int_{-\infty}^{\infty}  V_2 (y,s) \, ds =  v_{2,n}|_{\Sigma_+ \cap \Gamma }, 
\end{equation}
We define 
\[ 
w_+ (y,z,t) = 2(v_{1, n} - v_{2, n})(y,z,t)  \quad  \text{and}  \quad  w_-(y,z,t) =  -2(v_{1, -n} - v_{2, -n})(y,z,-t), 
\]
so that $w_\pm \in C^2(\ol{Q}_\pm)$ by Proposition \ref{prop:sol_smooth_2}.  The combination of the identities \eqref{eq:hopelast} and \eqref{eq:boundary_n} and a change of variables shows that
\begin{align}
\nonumber w_+ (y,z,z) - w_- (y,z,z) &= -\int_{-\infty}^{ z}  (V_1-V_2)(y, s) \, ds    - \int_{-\infty}^{-z }  (V_1-V_2)(y,- s) \, ds \\
\label{eq:A_canelation} &= -\int_{-\infty}^{ \infty}  (V_1-V_2)(y, s) \, ds = 0.
\end{align}
Also, by direct computation \eqref{eq:hopelast} yields 
\begin{equation*}
\begin{alignedat}{2}
|(\square -i2 \A_1 \cdot \nabla + V_1)w_\pm| &\lesssim |\A_1 - \A_2| +|V_1-V_2 |&  \quad &\text{in} \quad Q_\pm, \\
|(\partial_t + \partial_z + ie_n \cdot \A_1 ) w_\pm|  &=   |V_1-V_2|&  \quad &\text{in}  \quad \Gamma.
\end{alignedat}
\end{equation*}
 Hence \eqref{eq:condw+w-} and \eqref{eq:condw+w-2} are satisfied with $h_\pm = V_1-V_2$, $\E_\pm =-i\A_1$, $f_\pm =V_1$, and $\A_\pm $ and $q_\pm$ as in \eqref{eq:Aq_choice}.
Also, \eqref{eq:vcond} implies that  $w_+|_{\Sigma_+} = w_-|_{\Sigma_-} =  0$, and \eqref{eq:A_canelation}   that  $w_+|_\Gamma= w_-|_\Gamma$. The condition $\partial_\nu w_\pm|_{{\Sigma_\pm}} = 0$ follows again from Lemma \ref{lemma:normal_estimate}. 

Therefore  all the assumptions to apply  Lemma \ref{lemma:carl_1} hold with the previous choices for $\A_\pm$ and $q_\pm$, and this yields
\begin{equation} \label{eq:magnetic_last}
  \norm{ e^{\sigma  \phi_0} (V_1-V_2)}_{L^2(B)}^2 \lesssim    \gamma (\sigma)  \norm{e^{\sigma\phi_0} (\A_1-\A_2)}_{L^2(B)}^2 .
\end{equation}
Since $\gamma (\sigma) \to 0$ as $\sigma \to \infty$, combining the previous estimate with \eqref{eq:magnetic_2n_final1} immediately implies that $\A_1-\A_2 =0$  and  $V_1 -V_2 = 0$ in this gauge. In a general gauge then $d(\A_1-\A_2) =0$. And since $V_k = \A_k^2 + D \cdot \A_k + q_k$ one  obtains that $q_1 = q_2$.  This finishes the proof.
\end{proof}

\section{Reducing the number of measurements}

In this section we prove an analogous result to Theorem \ref{thm:2n_drift_nogauge}, in which the number of measurements is reduced to $n$. To compensate this, one needs assume that the potentials have certain symmetries (in fact, each component of $\W$ must satisfy some kind of antisymmetry property). The main change in the proof is  in the definition of $w_-$ in $Q_-$, which now is constructed by symmetry from $w_+$, instead of using new information coming from the solution associated to the opposite direction. For each $0\le j \le $ the symmetry of $e_j \cdot\W$ plays an essential role since it is  necessary  to show that $w_+ = w_-$ in $\Gamma$, in order to extract meaningful results from the Carleman estimate.

\begin{Theorem} \sl   \label{thm:n_drift_nogauge}
Let $\W_1,\W_2 \in C^{\M+2}_c(\R^n; \C^n)$, and  $V \in C^{\M}_c(\R^n; \C)$ with compact support in $B$.  Also, let $1 \le j \le n$ and $k=1,2$, and consider the $n$ solutions $U_{k,  j}$ of
  \begin{equation} \label{eq:drift_n_wave}
( \partial_t^2   - \Delta +  2\W_k \cdot \nabla + V) U_{k, j} = 0, \; \, \text{in}\; \R^{n+1}, \quad U_{k, j}|_{\left\{t<-1\right\}}= \delta (t-  x_j).
\end{equation}
 Assume also that for each $1 	\le j \le n$  there exists an orthogonal transformation $\mathcal O_j$ satisfying   that  $\mathcal O_j(e_j) = -e_j$ and such that
\begin{equation} \label{eq:antisymm_cond}
 e_j \cdot \W_k(x)= - e_j \cdot \W_k(\mathcal O_j(x)) \quad \text{for all} \quad k=1,2.
\end{equation}
Then, if for all $1 \le j \le n$ one has $U_{1,j} = U_{2,j}$ on the  surface $ \Sigma \cap \{ t \ge    x_j \}$, it holds that $\W_1 = \W_2$.
\end{Theorem}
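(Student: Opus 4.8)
The plan is to mimic the proof of Theorem~\ref{thm:2n_drift_nogauge}, the only structural change being that the comparison function $w_-$ on $Q_-$ will now be manufactured from $w_+$ by the reflection $\Phi_j(x,t):=(\mathcal{O}_j x,-t)$ rather than from a measurement in the direction $-e_j$ (which is no longer available). First one passes from $\delta$-waves to $H$-waves by Lemma~\ref{lemma:equivalence_data}, so that by Proposition~\ref{prop:sol_smooth_1} one has $U_{k,j}=u_{k,j}\,H(t-x_j)$ with $u_{k,j}$ solving \eqref{eq:struct_1} for $\W=\W_k$, $\omega=\omega_0=e_j$; the hypothesis gives $u_{1,j}=u_{2,j}$ on $\Sigma_+$, and evaluating at the top of $\partial B$ as in \eqref{eq:mu} produces a common profile $\mu_j(y):=\int_{\R}e_j\cdot\W_1(y+se_j)\,ds=\int_{\R}e_j\cdot\W_2(y+se_j)\,ds\in C^2_c(\R^{n-1})$. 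Fixing $1\le j\le n$ and using coordinates $x=(y,z)$ with $z=x_j$, I would set $w_+:=u_{1,j}-u_{2,j}$ on $\overline{Q}_+$ and define
\[
w_-(x,t):=e^{\mu_j(\pi_j x)}\,w_+(\mathcal{O}_j x,-t)\qquad\text{on }\overline{Q}_-,
\]
with $\pi_j$ the orthogonal projection onto $e_j^{\perp}$; this is legitimate since $\mathcal{O}_j$ preserves $B$ and $\Phi_j$ maps $\overline{Q}_-$ onto $\overline{Q}_+$, $\Sigma_-$ onto $\Sigma_+$ and $\Gamma$ onto $\Gamma$.

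The bulk of the work is to verify that $w_+,w_-$ meet the hypotheses of Lemma~\ref{lemma:w+w-}. The crucial point is $w_+=w_-$ on $\Gamma$: since $\mathcal{O}_j e_j=-e_j$, the point $\mathcal{O}_j x$ has $e_j$-component $-z$, so $w_+(\mathcal{O}_j x,-z)$ is again a boundary value of $w_+$ on $\Gamma$, and rewriting its exponent by inserting $\mathcal{O}_j^{-1}$ and using the antisymmetry \eqref{eq:antisymm_cond} turns $\int_{-\infty}^{-z}e_j\cdot\W_k(\mathcal{O}_j x+se_j)\,ds$ into $-\mu_j(\pi_j x)+\int_{-\infty}^{z}e_j\cdot\W_k(y+se_j)\,ds$, whence $w_+(\mathcal{O}_j x,-z)=e^{-\mu_j(\pi_j x)}w_+(x,z)$ and the prefactor cancels. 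Since $V$ is common, $w_+$ satisfies $(\square+2\W_1\cdot\nabla+V)w_+=2(\W_2-\W_1)\cdot\nabla u_{2,j}$ in $Q_+$ and $(\partial_t+\partial_z-e_j\cdot\W_1)w_+=e_j\cdot(\W_1-\W_2)\,u_{2,j}$ on $\Gamma$, as in \eqref{eq:w+s}; using $\square(f\circ\Phi_j)=(\square f)\circ\Phi_j$ and then conjugating by $e^{\mu_j}$ (which only shifts the first order coefficient by $\nabla\mu_j$ and modifies bounded lower order terms) shows that $w_-$ solves an equation of the same form, with bounded first order coefficient $\E_-(x)=\mathcal{O}_j^{T}\W_1(\mathcal{O}_j x)+\nabla\mu_j(x)$ satisfying $e_j\cdot\E_-=e_j\cdot\W_1$ (by \eqref{eq:antisymm_cond} and $e_j\cdot\nabla\mu_j=0$), with right hand side $\lesssim|(\W_1-\W_2)(\mathcal{O}_j x)|$, and with $(\partial_t+\partial_z-e_j\cdot\W_1)w_-$ on $\Gamma$ equal to $e_j\cdot(\W_1-\W_2)$ times a factor of modulus $\gtrsim 1$. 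Hence \eqref{eq:condw+w-} and \eqref{eq:condw+w-2} hold with $\omega=e_j$, $q_\pm=0$, $h_\pm=e_j\cdot(\W_1-\W_2)$, $\A_+=\W_1-\W_2$ and $\A_-=-(\W_1-\W_2)\circ\mathcal{O}_j$ (the sign chosen so that $e_j\cdot\A_-=h_-$). Finally $w_+|_{\Sigma_+}=w_-|_{\Sigma_-}=0$ because $u_{1,j}=u_{2,j}$ on $\Sigma_+$ and $\Phi_j(\Sigma_-)=\Sigma_+$, and $\partial_\nu w_\pm|_{\Sigma_\pm}=0$ follows from Lemma~\ref{lemma:normal_estimate}, applied to $w_+$ (with $\chi=1$) and to $e^{-\mu_j}w_-$ transported to $\overline{Q}_+$ by $\Phi_j$ (with $\chi=e^{\mu_j}$), exactly as in Theorem~\ref{thm:2n_drift_nogauge}.

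With all this in place, Lemma~\ref{lemma:w+w-} applies and, since $\|w_+-w_-\|_{H^1(\Gamma)}$ and all the $\Sigma_\pm$-terms in \eqref{eq:est_0Carl} vanish, it yields for each $j$ and all large $\sigma$
\[
\|e^{\sigma\phi_0}e_j\cdot(\W_1-\W_2)\|_{L^2(B)}^2\lesssim\gamma(\sigma)\Bigl(\|e^{\sigma\phi_0}(\W_1-\W_2)\|_{L^2(B)}^2+\|e^{\sigma\phi_0}\bigl((\W_1-\W_2)\circ\mathcal{O}_j\bigr)\|_{L^2(B)}^2\Bigr),
\]
where the implicit constant and $\gamma$ are, crucially, independent of the Carleman center $\ve$ (this is stated for the constant in Lemma~\ref{lemma:w+w-}, and for $\gamma$ it follows from inspecting the proofs of Lemmas~\ref{lemma:w+w-} and \ref{lemma:weight}). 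Summing over $j=1,\dots,n$ makes the left side $\|e^{\sigma\phi_0}(\W_1-\W_2)\|_{L^2(B)}^2$; the difficulty — and this is the step I expect to be the main obstacle — is that the reflected terms $\|e^{\sigma\phi_0}((\W_1-\W_2)\circ\mathcal{O}_j)\|_{L^2(B)}^2$ cannot be absorbed on the left, since the substitution $x\mapsto\mathcal{O}_j x$ turns each into $\|e^{\sigma\phi_0^{\mathcal{O}_j\ve}}(\W_1-\W_2)\|_{L^2(B)}^2$, a Carleman weight with a displaced center, so Lemma~\ref{lemma:carl_2} (which needs $\A_+=\A_-$) is not available. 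I expect the remedy to be to integrate the uniform inequality over $\ve$ on the sphere $2S^{n-1}$: each $\mathcal{O}_j$ is an isometry of $2S^{n-1}$ preserving surface measure, so $\int_{2S^{n-1}}\|e^{\sigma\phi_0^{\ve}}\bigl((\W_1-\W_2)\circ\mathcal{O}_j\bigr)\|_{L^2(B)}^2\,d\ve=\int_{2S^{n-1}}\|e^{\sigma\phi_0^{\ve}}(\W_1-\W_2)\|_{L^2(B)}^2\,d\ve=:I(\sigma)$, leaving $I(\sigma)\lesssim n\gamma(\sigma)I(\sigma)$; letting $\sigma\to\infty$ forces $I(\sigma)=0$, hence $\W_1=\W_2$. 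Thus the one genuinely new ingredient, compared with Theorem~\ref{thm:2n_drift_nogauge}, is to trade Lemma~\ref{lemma:carl_2} for this averaging over admissible Carleman centers; everything else transcribes.
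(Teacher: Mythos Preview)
Your proposal is correct and follows essentially the same route as the paper: you build $w_-$ from $w_+$ via the reflection $\Phi_j(x,t)=(\mathcal{O}_j x,-t)$ and a gauge factor $e^{\mu_j}$, verify $w_+=w_-$ on $\Gamma$ using the antisymmetry \eqref{eq:antisymm_cond}, apply the Carleman estimate of Lemma~\ref{lemma:w+w-}, and then average over the sphere of admissible centers $\ve\in 2S^{n-1}$ to absorb the reflected term $\|e^{\sigma\phi_0}((\W_1-\W_2)\circ\mathcal{O}_j)\|_{L^2(B)}^2$. The paper carries this out in the same way, phrasing the averaging as the introduction of the radial weight $r(x,\sigma)=(\int_{S^{n-1}}e^{2\sigma\phi_0(x,2\theta)}\,dS(\theta))^{1/2}$ and then using that an orthogonal change of variables leaves $\|r\,\A_\pm\|_{L^2(B)}$ invariant; the only cosmetic difference is that the paper averages first and sums over $j$ afterwards.
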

\noindent The simplest example of a vector field satisfying the previous conditions  is the case of an antisymmetric vector fields $\W_k$, that is, such that $\W_k (-x) = -\W_k(x)$ (for example, the gradient of a radial function). \\
We remark that it is possible to show  that the previous theorem  also holds if one substitutes condition \eqref{eq:antisymm_cond} by
\begin{equation*} 
 e_j \cdot \W_k(x)= - e_j \cdot  \ol{\W}_k(\mathcal O_j(x)) \quad \text{for all} \quad k=1,2 \quad \text{and}\quad 1 	\le j \le n,
\end{equation*}
that is, a symmetry condition instead of an antisymmetry condition in the imaginary part of $e_j \cdot\W_k$.

\begin{proof}[Proof of Theorem \ref{thm:n_drift_nogauge}]
By Lemma \ref{lemma:equivalence_data} we know that it is completely equivalent to assume that the $U_{k, j}$ solutions solve 
\begin{equation*}  
( \partial_t^2   - \Delta +  2\W_k \cdot \nabla + V) U_{k, j} = 0, \; \, \text{in}\; \R^{n+1}, \quad U_{k,  j}|_{\left\{t<-1\right\}}= H (t-  x_j).
\end{equation*}
 instead of \eqref{eq:drift_n_wave}. 
 Take  $1 \le j \le n$.  In this proof we fix   $x= (y,z)$ in $\R^n$, where $z= x_j$ and $y \in \R^{n-1}$. Let $k=1,2$. 
 By Proposition \ref{prop:sol_smooth_1} and  \eqref{eq:struct_1} we know  that $U_{k,j}(y,z,t) = u_{k, j}(y,z,t)H(t- z)$ where $u_{k, j}$ is the same function as $u_{k, +j}$ in \eqref{eq:drift_2n_ivp} 
\\
Under the assumption in the statement we  have that $u_{1,j} = u_{2, j}$ on the surface $ \Sigma_+ $ and hence, also in $ \Sigma_+ \cap \Gamma =    \Sigma \cap \{ t  =   z\}$, which implies that there is a function $\mu_j : \R^{n-1} \to \C$ such that \eqref{eq:mu} holds.

 We now define $w_+ := u_{1,j} - u_{2,j}$ in $\ol{Q}_+$, so that 
\begin{equation} \label{eq:w+_n}
w_+(y,z, z) = e^{\int_{-\infty}^z e_j \cdot \W_1(y,s) \, ds} - e^{\int_{-\infty}^z e_j \cdot \W_2(y,s) \, ds}.
\end{equation}
In this coordinates, for each matrix $\mathcal O_j$ in the statement there is by definition  a $n-1 \times n-1$ orthogonal matrix $\mathcal T_j$ such that 
\[   \mathcal O_j (y,z) =  (\mathcal T_j(y),-z) .\]
To apply Lemma \ref{lemma:w+w-} and Lemma \ref{lemma:carl_2} we need also to define an appropriate function $w_-$ in $\ol{Q}_-$. We take 
\[
w_-(y,z,t) = e^{\mu_{j}(y)} w_+( {\mathcal T}_j (y),-z,-t) .
\]
If $t=z$, using \eqref{eq:w+_n} and the symmetry condition \eqref{eq:antisymm_cond}, we get that
\begin{equation} \label{eq:vanishing_cond_W_proof_2}
\begin{aligned} 
 w_-(y,z,z)  &=  e^{   \mu_{j}(y) + \int_{-\infty}^{-z} e_j \cdot \W_1({\mathcal T}_j(y),s) \, ds} - e^{  \mu_{j}(y) + \int_{-\infty}^{-z} e_j \cdot \W_2({\mathcal T}_j(y),s) \, ds} \\ 
&=  e^{   \mu_{j}(y) - \int_{-\infty}^{-z} e_j \cdot \W_1(y,-s) \, ds} - e^{  \mu_{j}(y) - \int_{-\infty}^{-z} e_j \cdot \W_2(y,-s) \, ds} \\ 
 &= e^{ \mu_{j}(y) -   \int_{z}^{\infty} e_j \cdot \W_1(y,s) \, ds} - e^{ \mu_{j}(y) - \int_{z}^{\infty} e_j \cdot \W_2(y,s) \, ds}  = w_+(y,z,z).
\end{aligned}
\end{equation}
Therefore $w_+$ satisfies  \eqref{eq:w+s}, and $w_- $ satisfies
\begin{multline*}
( \square + 2  \E_-\cdot \nabla  + f_-) w_- (y,z,t)  \\
\hspace{30mm} = 2 e^{\mu_{j}(y)} (\W_2 -\W_1)( {\mathcal T}_j(y),-z) \cdot \nabla \big ( u_{2,j}( {\mathcal T}_j(y),-z,-t) \big ) \; \; \text{in} \; \; Q_- ,
\end{multline*}
where
\begin{equation} \label{eq:E_f}
\begin{aligned}
\E_-(y,z) &= \W_1( {\mathcal T}_j(y),-z) + \nabla \mu_{j}(y) \\
 f_- (y,z) &= V ( {\mathcal T}_j(y),-z) - |\nabla \mu_{j}(y)|^2 + \Delta \mu_{j}(y) -2\W_1( {\mathcal T}_j(y),-z) \cdot \nabla \mu_{j}(y).
 \end{aligned}
 \end{equation}
 Also, since $e_j \cdot \W_1( {\mathcal T}_j(y),-z)  = - e_j \cdot \W_1( y,z) $,  we have that
\begin{align*}
  (\partial_t + \partial_z -e_j \cdot \W_1( y,z)) w_- ( y,z,z) &= - e^{\mu_j(y)} \big (\partial_t + \partial_z -  e_j \cdot \W_1( {\mathcal T}_j(y),-z) \big ) w_+({\mathcal T}_j(y),-z,-z)   \\
  &=  - e^{\mu_{j}(y)}
 e_j \cdot (\W_1-\W_2)({\mathcal T}_j(y),z) u_{2,j}({\mathcal T}_j(y),-z,-z),
\end{align*}
Since $ |u_2(y,z,z)| \gtrsim 1$ always, and $|\nabla u_2|$ is bounded above in $\ol{Q}_+$, the previous identities show that 
\begin{equation*}
\begin{alignedat}{2}
|(\square + 2 \E_\pm \cdot \nabla + f_\pm )w_\pm| &\lesssim |\A_\pm| & \quad &\text{in} \quad Q_\pm, \\
|(\partial_t + \partial_z -e_j \cdot \E_\pm) w_\pm| &\gtrsim   |e_j \cdot \A_\pm|&  \quad &\text{in} \quad \Gamma,
\end{alignedat}
\end{equation*}
where $\E_-$ and $f_-$ where defined in  \eqref{eq:E_f}, $\E_+ = \W_1$,  $f_+ = V$, 
\begin{equation} \label{eq:AA_n}
\A_+ = \W_1-\W_2 \quad \text{and} \quad \A_-(y,z) = (\W_1-\W_2)({\mathcal T}_j(y),-z).
\end{equation} 
From \eqref{eq:vanishing_cond_W_proof_2} we get that $w_+ = w_-$ on $\Gamma$. Also we have that $w_+|_{\Sigma_+} = w_-|_{\Sigma_-} =  0$, and  $\partial_\nu w_\pm|_{{\Sigma_\pm}} = 0$ by Lemma \ref{lemma:normal_estimate}. Applying  \eqref{eq:carlfinal_2_0}  of Lemma \ref{lemma:carl_2}  with $q_\pm = 0$ and $\omega = e_j$ yields 
 \begin{equation*}
 \sum_{\pm} \norm{ e^{\sigma  \phi_0} e_j \cdot \A_\pm }_{L^2(B)}^2 \lesssim  \gamma (\sigma) \sum_{\pm}  \norm{e^{\sigma\phi_0} \A_\pm}_{L^2(B)}^2.
  \end{equation*} 
Now, this  holds for $\phi_0(x) = \phi_0(x,\ve) = e^{\lambda\vert x-\ve \vert^2}$, where $\ve$ is an arbitrary vector such that $|\ve| = 2$. Also, the implicit constant  in the estimate is independent of $\ve$. Therefore writing $\ve = 2\theta$ for $\theta \in S^{n-1}$,  we can integrate both sides of the previous estimate in $S^{n-1}$ to get
 \begin{equation*}
\int_{S^{n-1}} \sum_{\pm} \norm{ e^{\sigma  \phi_0(\cdot,2\theta)} e_j \cdot \A_\pm }_{L^2(B)}^2  \, d S(\theta) \lesssim  \gamma (\sigma) \sum_{\pm}  \int_{S^{n-1}} \norm{e^{\sigma \phi_0(\cdot,2\theta)} \A_\pm}_{L^2(B)}^2  \, d S(\theta),
  \end{equation*}
 where $d S(\theta)$ denotes integration   against the surface measure of the unit sphere. Changing the order of integration with the $L^2(B)$ integrals gives
   \begin{equation} \label{eq:tr}
 \sum_{\pm} \norm{ r(x,\sigma) e_j \cdot \A_\pm }_{L^2(B)}^2   \lesssim  \gamma (\sigma) \sum_{\pm} \norm{ r(x,\sigma) \A_\pm}_{L^2(B)}^2 ,
  \end{equation}
 where it can be verified that
 $r(x,\sigma) : =   \left( \int_{S^{n-1}} e^{2\sigma\phi_0(x,2\theta)}  \, d S(\theta) \right)^{1/2}$ is a radial function.
 
Since $r(x)$ is a radial, and  $\A_+$ and $\A_-$ differ in an orthogonal transformation by \eqref{eq:AA_n}, a direct change of variables shows that
   \begin{equation*}
 \norm{ r(x,\sigma) e_j \cdot \A_-}_{L^2(B)}^2   =\norm{ r(x,\sigma) e_j \cdot \A_+ }_{L^2(B)}^2,  \quad \text{and} \quad \norm{ r(x,\sigma) \A_-}_{L^2(B)}^2 =    \norm{ r(x,\sigma) \A_+}_{L^2(B)}^2 ,
  \end{equation*}
  and therefore, taking into account that $\A_+ = \W_1-\W_2$ we get from \eqref{eq:tr} that
   \begin{equation*}
  \norm{ r(x,\sigma) e_j \cdot (\W_1-\W_2) }_{L^2(B)}^2   \lesssim  \gamma (\sigma)  \norm{ r(x,\sigma) (\W_1-\W_2)}_{L^2(B)}^2 .
  \end{equation*}
  This estimate can be proved for any $1 \le j \le n$.  Adding over all directions, and using that  $\gamma (\sigma) \to 0$ as $\sigma \to 0$ to absorb the resulting term on the right hand side in the left, yields
     \begin{equation*}
  \norm{ r(x,\sigma) (\W_1-\W_2) }_{L^2(B)}^2   \le  0 ,
  \end{equation*}
for $\sigma>0$ large enough. Since $r(x,\sigma)>0$ for all $x\in \R^n$, and $\sigma>0$, the previous estimate implies that  $\W_1 =\W_2$. This finishes the proof.
\end{proof}

Combining the techniques used in this proof with the techniques used in  the proof of Theorem \ref{thm:2n_magnetic}, the reader can obtain many possible results similar to the previous one, always interchanging some measurements for symmetry assumptions on $\A$ and $q$. A specially simple case is the following.
\begin{Theorem} \sl   \label{thm:n_magnetic}
Let  $\A_1, \A_2 \in C^{\M+2}_c(\R^n; \C^n)$ and $q_1,q_2  \in  C^{\M}_c(\R^n; \C)$ with compact support in $B$ and such that
 \begin{equation*} 
\A_k(-x) = -\A_k(x) , \quad \text{for} \quad k=1,2. 
 \end{equation*}
Let $1 \le j \le n-1$ and consider the   $n-1$ solutions $U_{k,   j}(x,t)$ of
   \begin{equation*}
( \partial_t^2 + (D +\A_k )^2 +  q_k )U_{k,   j}=0  \; \, \text{in}\; \R^{n+1}, \quad U_{k,   j}|_{\left\{t<-1\right\}} = \delta (t-    x_j).
\end{equation*}
and the $2$ solutions $U_{k,   \pm n}(x,t)$ of
   \begin{equation*}
( \partial_t^2 + (D +\A_k )^2 +  q_k )U_{k,   \pm n}=0  \; \, \text{in}\; \R^{n+1}, \quad U_{k,   \pm n}|_{\left\{t<-1\right\}} = \delta (t-    (\pm x_n)).
\end{equation*}
 If for each $1 \le j \le n$ one has $U_{1,  j} = U_{2,  j}$ on the surface $\Sigma \cap \{ t \ge     x_j \}$, then $d \A_1 = d \A_2$ and $q_1 = q_2$.
\end{Theorem}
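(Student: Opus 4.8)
The plan is to prove the statement in a normalized gauge, combining the reflection trick behind Theorem~\ref{thm:n_drift_nogauge} (used for the directions $e_1,\dots,e_{n-1}$) with the decoupling of $\A$ from $q$ behind Theorem~\ref{thm:2n_magnetic} (used for the direction $e_n$, where the pair $\pm e_n$ is available). \textbf{Gauge normalization.} Since $d\A_k$ and $q_k$ are unchanged by $\A_k\mapsto\A_k-\nabla f_k$ with $f_k\in C^{\M+3}_c(B)$, and this does not affect the data, it suffices to prove $\A_1=\A_2$ and $q_1=q_2$ after one such change. Reading off the $\delta$-parts of $U_{k,\pm n}$ restricted to $\Sigma$ (exactly as the function $\mu_j$ below is read off from $\Sigma_+\cap\Gamma$) forces $\int_{-\infty}^{\infty}e_n\cdot(\A_1-\A_2)(y,s)\,ds=0$ for every $y$, so $f_1(y,z):=\int_{-\infty}^{z}e_n\cdot(\A_1-\A_2)(y,s)\,ds$ lies in $C^{\M+3}_c(B)$; moreover \eqref{eq:antysimm} makes $e_n\cdot(\A_1-\A_2)$ antisymmetric, which together with the vanishing total integral makes $f_1$ even, so $\A_1-\nabla f_1$ is again antisymmetric and supported in $B$. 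Relabelling $\A_1:=\A_1-\nabla f_1$ we may thus assume $e_n\cdot\A_1=e_n\cdot\A_2$; set $V_k:=\A_k^2+D\cdot\A_k+q_k$.

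\textbf{Recovery of $\A$.} Fix $1\le j\le n-1$ and use coordinates $x=(y,z)$ with $z=x_j$. By Lemma~\ref{lemma:equivalence_data} the data determines the $H$-waves $u_{k,j}$ on $\Sigma_+$; matching them on $\Sigma_+\cap\Gamma$ produces $\mu_j$ with $\mu_j(y)=-i\int_{-\infty}^\infty e_j\cdot\A_1(y,s)\,ds=-i\int_{-\infty}^\infty e_j\cdot\A_2(y,s)\,ds$. Taking $\mathcal O_j=-\mathrm{Id}$ (so $\mathcal O_j(e_j)=-e_j$ and, by \eqref{eq:antysimm}, $e_j\cdot\A_k(\mathcal O_j x)=-e_j\cdot\A_k(x)$), I set $w_+:=u_{1,j}-u_{2,j}$ on $\overline{Q}_+$ and $w_-(y,z,t):=e^{\mu_j(y)}w_+(-y,-z,-t)$ on $\overline{Q}_-$, as in the proof of Theorem~\ref{thm:n_drift_nogauge}. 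Then \eqref{eq:condw+w-} and \eqref{eq:condw+w-2} hold with $\A_\pm$ a reflection of $\A_1-\A_2$, with $q_\pm$ a reflection of $V_1-V_2$ (the zeroth order term is no longer fixed, so $q_\pm$ does not vanish), and with $h_\pm=e_j\cdot\A_\pm$; the antisymmetry gives $w_+=w_-$ on $\Gamma$, the hypothesis gives $w_\pm|_{\Sigma_\pm}=0$, and Lemma~\ref{lemma:normal_estimate} gives $\partial_\nu w_\pm|_{\Sigma_\pm}=0$. Applying \eqref{eq:carlfinal_2_0} with $\omega=e_j$, integrating both sides over $\ve=2\theta\in S^{n-1}$ (the implicit constant is $\ve$-independent) to replace $\phi_0$ by a radial weight $r(x,\sigma)$, and changing variables to identify the $\A_-$ and $q_-$ norms with the $\A_+$ and $q_+$ norms, I obtain
\[
\norm{r\,e_j\cdot(\A_1-\A_2)}_{L^2(B)}^2\lesssim\gamma(\sigma)\bigl(\norm{r(\A_1-\A_2)}_{L^2(B)}^2+\norm{r(V_1-V_2)}_{L^2(B)}^2\bigr).
\]
Summing over $j=1,\dots,n-1$, using $e_n\cdot(\A_1-\A_2)=0$ so that $\A_1-\A_2=\sum_{j<n}(e_j\cdot(\A_1-\A_2))e_j$, and absorbing the first term on the right gives $\norm{r(\A_1-\A_2)}^2\lesssim\gamma(\sigma)\norm{r(V_1-V_2)}^2$.

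\textbf{Recovery of $q$.} Now take $z=x_n$. Since $e_n\cdot\A_1=e_n\cdot\A_2$, the transport phases of the $\delta$-waves $U_{k,\pm n}$ in \eqref{eq:struct_2} coincide, $\psi^{(1)}_\pm=\psi^{(2)}_\pm=:\psi_\pm$, so the boundary values on $\Gamma$ of $v_{1,\pm n}-v_{2,\pm n}$ equal $e^{\psi_\pm}$ times an integral of $\hat V_1-\hat V_2$, where $\hat V_k:=V_k-(\Delta\psi_\pm+2i\A_k\cdot\nabla\psi_\pm+|\nabla\psi_\pm|^2)$, so $|(\hat V_1-\hat V_2)-(V_1-V_2)|\lesssim|\A_1-\A_2|$. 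Mimicking step~3 of the proof of Theorem~\ref{thm:2n_magnetic}, I build $w_+$ from $v_{1,n}-v_{2,n}$ and $w_-$ from $-(v_{1,-n}-v_{2,-n})(\cdot,-t)$; the identity $\int_{-\infty}^\infty(V_1-V_2)(y,s)\,ds=0$ (equivalently for $\hat V_1-\hat V_2$), which follows from matching the $\pm e_n$ data on $\Sigma_+\cap\Gamma$, forces $w_+=w_-$ on $\Gamma$, while \eqref{eq:condw+w-}--\eqref{eq:condw+w-2} hold with $\A_\pm$ a reflection of $\A_1-\A_2$ and $q_\pm=h_\pm$ a reflection of $\hat V_1-\hat V_2$, and again $w_\pm|_{\Sigma_\pm}=0$, $\partial_\nu w_\pm|_{\Sigma_\pm}=0$ by Lemma~\ref{lemma:normal_estimate}. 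Lemma~\ref{lemma:carl_1}, after the same averaging over $\theta$, yields $\norm{r(\hat V_1-\hat V_2)}^2\lesssim\gamma(\sigma)\norm{r(\A_1-\A_2)}^2$, hence $\norm{r(V_1-V_2)}^2\lesssim\norm{r(\hat V_1-\hat V_2)}^2+\norm{r(\A_1-\A_2)}^2\lesssim\norm{r(\A_1-\A_2)}^2$. Feeding this into the estimate of the previous step gives $\norm{r(\A_1-\A_2)}^2\lesssim\gamma(\sigma)\norm{r(\A_1-\A_2)}^2$, so $\A_1=\A_2$ for $\sigma$ large (since $\gamma(\sigma)\to0$ and $r>0$); then $\hat V_1=\hat V_2$, hence $V_1=V_2$, hence $q_1=V_1-\A_1^2-D\cdot\A_1=V_2-\A_2^2-D\cdot\A_2=q_2$. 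Undoing the gauge change yields $d\A_1=d\A_2$ and $q_1=q_2$.

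\textbf{Where the difficulty lies.} The only genuinely new point — and the reason the pair $\pm e_n$ is needed where a single measurement suffices in the other directions — is that the normalization $e_n\cdot\A_k\equiv0$ used for Theorem~\ref{thm:2n_magnetic} is incompatible with the antisymmetry \eqref{eq:antysimm}: one can at best arrange $e_n\cdot\A_1=e_n\cdot\A_2$, and then the $q$-decoupling must be run with the non-vanishing (but now coefficient-independent) transport phases $\psi_\pm$ present. Keeping track of the extra first-order terms $2i\A_k\cdot\nabla\psi_\pm$ that they generate, and using the identity $\int_{-\infty}^\infty(V_1-V_2)(y,s)\,ds=0$ in place of the cleaner cancellation available in the gauged setting, is the main obstacle; the reflection construction, the averaging of the Carleman weight over $\theta\in S^{n-1}$, the verification of the hypotheses of Lemmas~\ref{lemma:carl_1}--\ref{lemma:carl_2}, and the final absorption all proceed exactly as in the proofs of Theorems~\ref{thm:n_drift_nogauge} and \ref{thm:2n_magnetic}.
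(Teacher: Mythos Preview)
Your overall strategy matches the paper's sketch: gauge-normalize so that $e_n\cdot\A_1=e_n\cdot\A_2$, use the reflection construction of Theorem~\ref{thm:n_drift_nogauge} for $j<n$, and the two-direction argument of Theorem~\ref{thm:2n_magnetic} for $j=n$. You are in fact more careful than the paper in one place: for $j<n$ you retain the factor $e^{\mu_j(y)}$ in the definition of $w_-$, whereas the paper's sketch asserts that ``the $\mu_j$ function vanishes due to the antisymmetry condition''; antisymmetry only gives $\mu_j(-y)=-\mu_j(y)$, so your computation following \eqref{eq:vanishing_cond_W_proof_2} is the right one. You also average the weight over $S^{n-1}$ to obtain a radial $r(x,\sigma)$, while the paper exploits $\mathcal O_j=-\mathrm{Id}$ to work directly with the pair $\phi_0(\cdot,\pm\ve)$; either device suffices.

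There is, however, a genuine gap in the $e_n$ step --- and the paper's sketch glosses over exactly the same point when it says ``repeat exactly the same arguments''. After normalizing to $e_n\cdot\A_1=e_n\cdot\A_2$, the transport phase $\psi_\pm$ is $k$-independent but nonzero, and $\psi_+-\psi_-=\mu_n(y):=-i\int_{-\infty}^\infty e_n\cdot\A_1(y,s)\,ds$. With your choice of $w_\pm$ one computes on $\Gamma$
\[
w_+(y,z,z)=-e^{\psi_+}\!\int_{-\infty}^{z}G_+(y,s)\,ds,\qquad
w_-(y,z,z)=e^{\psi_-}\!\int_{z}^{\infty}G_-(y,s)\,ds,
\]
with $G_\pm=(V_1-V_2)-2i(\A_1-\A_2)\cdot\nabla\psi_\pm$. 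The prefactors $e^{\psi_+}\neq e^{\psi_-}$ differ, and moreover $G_+-G_-=-2i(\A_1-\A_2)\cdot\nabla_y\mu_n\not\equiv0$; the identity $\int_{-\infty}^\infty G_+(y,s)\,ds=0$ extracted from the data on $\Sigma_+\cap\Gamma$ is therefore \emph{not} enough to force $w_+=w_-$ on $\Gamma$. Without that equality the term $\sigma^3 e^{c\sigma}\norm{w_+-w_-}_{H^1(\Gamma)}(\cdots)$ in \eqref{eq:est_0Carl} survives and Lemma~\ref{lemma:carl_1} cannot be invoked. You correctly flag this as ``the main obstacle'', but the assertion that the integral identity ``forces $w_+=w_-$ on $\Gamma$'' is not justified; one needs either a further correction (e.g.\ an $e^{\mu_n(y)}$ factor in $w_-$ together with a device to absorb the residual $(\A_1-\A_2)\cdot\nabla_y\mu_n$ term) or a different organization of the two estimates so that this residual, which is linear in $\A_1-\A_2$, can be controlled by the already-obtained bound from the $j<n$ directions.
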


\begin{proof}
 We use the notation introduced in \eqref{eq:Vk}. Here is convenient to use that $U_{k,j}$ is the same as $U_{k,+j}$ in the proof of Theorem \ref{thm:2n_magnetic}.   We only give a sketch of the main ideas in the proof. One can start as in the proof of Theorem \ref{thm:2n_magnetic} by making  a change of gauge such that $e_n \cdot \A_1 = e_n \cdot \A_2 $.  Let $0\le j \le n-1$. By Lemma \ref{lemma:equivalence_data} we can assume that the $U_{k,j} =U_{k,+j}$ satisfy \eqref{eq:nn_1} and \eqref{eq:nn_2} with $\W_k = -i\A_k$ and $V= V_k$. We define $w_+ = u_{1,j}-u_{2,j}$ and $w_-(y,z,t) = w_+(-y,-z,-t)$ (the $\mu_j$ function vanishes due to the antisymmetry condition). It follows that $w_+ = w_-$ in $\Gamma$, this can be verified as in \eqref{eq:vanishing_cond_W_proof_2}. The remaining  conditions to apply Lemma \ref{lemma:carl_2}  with $\A_+ =  \A_1 -\A_2$, $\A_- = \A_+(-x)$, $q_+  = V_1-V_2$ and $q_-(x) =q_+(-x)$ are easily verified. Then Lemma \ref{lemma:carl_2} and  a change of variables  to transform $q_-$ in $q_+$, and $\A_-$  in $\A_+$    yields the estimate
 \begin{multline}  \label{eq:last_mag_n}
\norm{ e^{\sigma  \phi_0(\cdot, \vartheta)}   (\A_1 - \A_2) }_{L^2(B)}^2 + \norm{ e^{\sigma  \phi_0(\cdot, -\vartheta)}   (\A_1 - \A_2) }_{L^2(B)}^2 \\
\lesssim \gamma (\sigma) \norm{e^{\sigma\phi_0(\cdot, \vartheta)} (V_1-V_2)}_{L^2(B)}^2 +  \gamma (\sigma) \norm{e^{\sigma\phi_0(\cdot, -\vartheta)} (V_1-V_2)}_{L^2(B)}^2.
  \end{multline}
We can now repeat exactly the same arguments used in the proof of Theorem \ref{thm:2n_magnetic} to prove \eqref{eq:magnetic_last}. In fact we have that \eqref{eq:magnetic_last} holds independently for both the weight functions $\phi_0(\cdot, \vartheta)$ and $\phi_0(\cdot,- \vartheta)$. Adding these two possible estimates yields
 \begin{multline*} 
\norm{e^{\sigma\phi_0(\cdot, \vartheta)} (V_1-V_2)}_{L^2(B)}^2  + \norm{e^{\sigma\phi_0(\cdot, -\vartheta)} (V_1-V_2)}_{L^2(B)}^2 \\
\lesssim \gamma (\sigma)\norm{ e^{\sigma  \phi_0(\cdot, \vartheta)}   (\A_1 - \A_2) }_{L^2(B)}^2 +  \gamma (\sigma) \norm{ e^{\sigma  \phi_0(\cdot, -\vartheta)}   (\A_1 - \A_2) }_{L^2(B)}^2 .
  \end{multline*}
The previous inequality and \eqref{eq:last_mag_n} imply that $q_1 = q_2$ and $\A_1 = \A_2$ in the gauge fixed at the beginning of the proof.
\end{proof}


\appendix

\section{Stationary scattering} \label{appendix:stationary}

In this section  we prove Theorem \ref{thm:equivalence}. We have adapted the proof of \cite[Theorem 5.1]{RakeshSalo2} in order to allow for the presence of a first order perturbation, but the main ideas and the exposition are similar to the work in that paper.

We  define $\C^+: = \{ \lambda \in \C : \, \Im(\lambda)>0 \}$, and we write $R_{\mathcal V}(\lambda) $ for the  resolvent operator $R_{\mathcal V}(\lambda) =(H_{\mathcal V} -\lambda^2)^{-1}$ in case it is well defined.  We also use the following nonstandard convention for the Fourier transform and its inverse for Schwartz functions on the real line:
\[ \tilde{f} (\lambda)  = \int_{-\infty}^\infty e^{i \lambda t} f(t) \, dt \qquad {\breve F} (t)  = \frac{1}{2\pi}\int_{-\infty}^\infty e^{-i \lambda t} F(\lambda) \, d\lambda ,\]
(and equally for the extension of the Fourier transform to  tempered distributions).

In order to illustrate why it is reasonable to expect an equivalence between the stationary scattering data and the time domain data as stated in Theorem \ref{thm:equivalence}, we reproduce here the following heuristic argument given in \cite{RakeshSalo2}. Let $U_{\mathcal V}(x,t;\omega)$ be the solution of
\begin{equation*}
(\partial_t^2 - \Delta + \mathcal V (x,D) ) U_{\mathcal V} =0 \; \; \text{in} \; \R^n \times \R, \qquad U_{\mathcal V}|_{\left\{t<-1\right\}}= \delta (t-x\cdot \omega).
\end{equation*}
Suppose for the moment that the Fourier transform of $U_{\mathcal V}$ in the time variable is well defined. Then for each $\lambda \in \R$ the function $\widetilde{U}_{\mathcal V}(x,\lambda;\omega) $ should solve the equation
\begin{equation} \label{eq:q_outg}
 (- \Delta + \mathcal V(x,D) -\lambda^2 ) \widetilde{U}_{\mathcal V}(x,\lambda)   = 0 \; \; \text{in} \; \R^n.
 \end{equation}
If we define the time domain scattering solution to be $u_{\mathcal V} = U_{\mathcal V } - \delta (t-x\cdot \omega)$,  one has that $\widetilde{U}_{\mathcal V}(x,\lambda) = e^{i\lambda x\cdot \omega} + \widetilde {u}_{\mathcal V}(x,\lambda)$,
 where $\widetilde{u}_{\mathcal V}(x,\lambda)$ extends holomorphically to  $\{ \Im(\lambda)>0 \}$ since $u_{\mathcal V}$ vanishes for $t<-1$. 
 Since these are the properties that characterize the the outgoing eigenfunctions of \eqref{eq:q_outg} one might expect that
 \[
\widetilde{U}_{\mathcal V}(x,\lambda; \omega) = \psi_{\mathcal V}(x,\lambda,\omega),
 \]
where $\psi_{\mathcal V}$ is the solution of \eqref{eq:sta_sct}. Now, the condition $a_{\mathcal V_1}(\lambda,\cdot,\omega) = a_{\mathcal V_2}(\lambda,\cdot,\omega)$ implies by the Rellich uniqueness theorem  that the outgoing eigenfunctions for $H_{\mathcal V_1}$ and $H_{\mathcal V_2}$ agree outside  the support of the potentials:
\begin{equation} \label{eq:out_supp}
\psi_{\mathcal V_1} (\lambda,\cdot,\omega)|_{\R^n \setminus \ol{B} }  = \psi_{\mathcal V_2}(\lambda,\cdot,\omega) |_{\R^n \setminus \ol{B} } .
\end{equation}
If the map $\lambda \to \psi_{\mathcal V}(\lambda,x,\omega)$ were smooth near $\lambda =0$, then one could have \eqref{eq:out_supp}  for all $\lambda \in \R$. Taking the inverse Fourier transform would imply that
\[ U_{\mathcal V_1}(\cdot,t;\omega)|_{\R^n \setminus \ol{B} } = U_{\mathcal V_2}(\cdot,t;\omega)|_{\R^n \setminus \ol{B} }. \]

The argument above is only formal since requires the regularity of the map $\lambda \to \psi_{\mathcal V}(\lambda,x,\omega)$ on the real line. The regularity of this map is related to the poles  of the meromorphic continuation of the resolvent $R_{\mathcal V}(\lambda)$, initially defined in the resolvent set of $H_{\mathcal V}$. Indeed, in some cases there is a pole located at $\lambda =0$ and thus the argument above does not work in general.
To get around these difficulties we start by recalling the following property of the Fourier transform. 
\begin{Lemma} \sl  \label{lemma:holomorphic_Fourier}
Suppose $F(z)$ is analytic on $\{\Im(z) >r\}$ for some $r\in \R$ and 
\[
|F(z)| \le C(1 + |z|)^N e^{R \Im(z)}, \quad \text{for} \;\; \Im(z)>r, 
\]
for some positive $R,C,N$ independent of $z$. There exist an $f \in \mathcal D(\R)$ with $\supp(f)\subset [-R,\infty) $ and $e^{-(\mu-r)t} f \in \mathcal S(\R)$ that also satisfies $(e^{-(\mu-r)t} f)^{\sim}(\cdot) =F(\cdot + i\mu)$ for every $\mu>r$.
\end{Lemma}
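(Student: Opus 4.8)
The statement is a Paley--Wiener--Schwartz theorem, and the plan is to prove it by first regularizing $F$ so that it becomes integrable along horizontal lines, taking an inverse Fourier transform there, and then reinstating the polynomial growth factor as a constant-coefficient differential operator in $t$. (Here $f$ is to be understood as a distribution and $e^{-(\mu-r)t}f$ as a tempered distribution.)

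First I would fix an auxiliary point $\zeta_0 \in \C$ with $\Im(\zeta_0) < r$. Then on every line $\{\Im z = \mu\}$ with $\mu > r$ one has $|z - \zeta_0| \gtrsim 1 + |z|$, so that $G_\mu(\lambda) := F(\lambda + i\mu)\,(\lambda + i\mu - \zeta_0)^{-N-2}$ is continuous and lies in $L^1(\R)$, with $L^1$-norm controlled by a constant (depending on $\mu$) times $e^{R\mu}$. Let $h_\mu := \breve{G}_\mu$, which by Riemann--Lebesgue is bounded and continuous on $\R$. The key geometric step is a contour shift: integrating $e^{-izt}F(z)(z-\zeta_0)^{-N-2}$ over the boundary of a rectangle $[-S,S]\times[\mu,\mu']$ (with $r < \mu < \mu'$), the integrand on the two vertical sides is $O(S^{-2})$ uniformly in the relevant range, so Cauchy's theorem and $S \to \infty$ let one replace the line $\{\Im z = \mu\}$ by $\{\Im z = \mu'\}$. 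Rewriting $h_\mu(t) = \frac{e^{-\mu t}}{2\pi}\int_{\Im z = \mu} e^{-i(\Re z)t}F(z)(z-\zeta_0)^{-N-2}\,dz$, this shift shows that $e^{(\mu-r)t}h_\mu(t)$ does not depend on $\mu > r$; denote the resulting continuous function by $h$.

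Next I would read off the support and the differential identity. For $t < -R$, the bound $|e^{-izt}F(z)(z-\zeta_0)^{-N-2}| \lesssim e^{(R+t)\mu}(1+|z|)^{-2}$ on $\{\Im z = \mu\}$ gives $e^{(\mu-r)t}h_\mu(t) \to 0$ as $\mu \to +\infty$; being constant in $\mu$ it vanishes, so $\supp(h) \subset [-R,\infty)$. For the differential identity, with the paper's convention one has $(i\p_t g)^{\sim}(\lambda) = \lambda\, \tilde g(\lambda)$, so multiplication by the polynomial $(\lambda + i\mu - \zeta_0)^{N+2}$ corresponds to the operator $(i\p_t + i\mu - \zeta_0)^{N+2}$; since $F(\lambda + i\mu) = (\lambda + i\mu - \zeta_0)^{N+2}G_\mu(\lambda)$, the tempered distribution $f_\mu := (i\p_t + i\mu - \zeta_0)^{N+2}h_\mu$ satisfies $\tilde{f}_\mu = F(\,\cdot\, + i\mu)$. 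Using $h_\mu = e^{-(\mu-r)t}h$ together with the conjugation identity $e^{(\mu-r)t}(i\p_t + i\mu - \zeta_0)e^{-(\mu-r)t} = i\p_t + ir - \zeta_0$, I would set
\[
f := (i\p_t + ir - \zeta_0)^{N+2} h,
\]
which is independent of $\mu$ and satisfies $e^{-(\mu-r)t}f = f_\mu$ for every $\mu > r$. Then $\supp(f) \subset \supp(h) \subset [-R,\infty)$; $e^{-(\mu-r)t}f = f_\mu$ is tempered, being the inverse transform of the polynomially bounded function $F(\,\cdot\, + i\mu)$; and $(e^{-(\mu-r)t}f)^{\sim} = \tilde f_\mu = F(\,\cdot\, + i\mu)$, as required.

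The routine parts are the bookkeeping of Fourier-transform conventions and the rectangle estimates. The one point that needs care---and the reason for introducing $\zeta_0$ and the factor $(z-\zeta_0)^{-N-2}$---is that $F$ is only polynomially bounded on horizontal lines, so neither the contour shift nor Fourier inversion can be applied to $F$ directly; one works with the integrable regularization $G_\mu$ and reinstates the polynomial afterwards as a differential operator. I would also note in passing that $f$ does not depend on the auxiliary choice of $\zeta_0$, since $f_\mu$ is intrinsically the inverse transform of $F(\,\cdot\, + i\mu)$.
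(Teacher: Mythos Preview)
Your argument is correct and complete. The paper itself does not give a proof of this lemma: it simply records that ``this is essentially a Paley--Wiener theorem that we have stated in the form given in \cite[Lemma 5.3]{RakeshSalo2}''. So there is nothing to compare against; you have supplied a full, self-contained proof where the paper only cites a reference.

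Your approach---regularize $F$ by dividing by $(z-\zeta_0)^{N+2}$ with $\Im\zeta_0<r$ so the result is $L^1$ on each horizontal line, invert the Fourier transform there, shift contours to get $\mu$-independence and the support condition, and then recover $f$ by applying the differential operator $(i\partial_t+ir-\zeta_0)^{N+2}$---is the standard route and all the steps check out. The uniform lower bound $|z-\zeta_0|\gtrsim 1+|z|$ does hold with a constant depending only on $r$ and $\zeta_0$ (not on $\mu$), which is what you need for the $\mu\to\infty$ limit in the support argument; and the conjugation identity $e^{(\mu-r)t}(i\partial_t+i\mu-\zeta_0)e^{-(\mu-r)t}=i\partial_t+ir-\zeta_0$ is exactly right.

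One small slip: in the displayed rewriting of $h_\mu$, the integrand should be $e^{-izt}$ rather than $e^{-i(\Re z)t}$; with the holomorphic exponential the factor $e^{-\mu t}$ out front is then correct and the contour shift goes through as you claim. As written (with $e^{-i(\Re z)t}$) the integrand is not holomorphic in $z$, so the rectangle argument would not apply literally. This is clearly just a typo and does not affect the logic.
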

\noindent This is essentially a Paley-Wiener theorem that we have stated in the form given in \cite[Lemma 5.3]{RakeshSalo2}.
 In the following proposition we give the precise relation between the time domain and frequency measurements. 

\begin{Proposition} \sl  \label{prop:equivalence_data} 
Let $\omega\in S^{n-1}$ and let  ${\mathcal V}(x,D) = \W \cdot \nabla + V$ with  $\W \in C^{\M+2}_c(\R^n; \C^n)$, and  $V \in C^{\M}_c(\R^n; \C)$ compactly supported in $B$.     Let $U_{\mathcal V}$ be the  solution of
\begin{equation} \label{eq:basic_wave}
(\partial_t^2 - \Delta + \mathcal V (x,D) ) U_{\mathcal V} =0 \; \; \text{in} \; \R^n \times \R, \qquad U_{\mathcal V}|_{\left\{t<-1\right\}}= \delta (t-x\cdot \omega),
\end{equation}
 given by Proposition $\ref{prop:sol_smooth_2}$, and let  $u_{\mathcal V}(x,t;\omega) = U_{\mathcal V} (x,t;\omega) -\delta(t-x \cdot \omega)$. Assume also that there exists some $r \ge 0$ such that for $\Im(\lambda) \ge r$ 
\begin{equation} \label{eq:res_unif}
\norm{R_{\mathcal V}(\lambda ) }_{L^2\to L^2} \le C_{r} ,
\end{equation}
where $C_r>0$ is independent of $\lambda$. Then, if we define
\[
\psi^s_{\mathcal V}(\cdot, \lambda, \omega):=-  R_{\mathcal V}(\lambda)({\mathcal V}(x,D) e^{i\lambda x\cdot \omega} ), \quad \text{for } \; \Im(\lambda) > r,
\]
the following identity holds
\[
\langle u_{\mathcal V} (x,t; \omega), \varphi (x)\chi (t) \rangle_{\R^n_x \times \R_t}= \langle \psi^s_{\mathcal V}(x, \sigma + i\mu, \omega), \varphi(x) (e^{\mu t}\chi )^\vee(\sigma) \rangle_{\R^n_x \times \R_\sigma},
\]
for all $\mu > r$, and all $\varphi \in C^\infty_c(\R^n)$ and $\chi\in C^\infty_c(\R)$.
\end{Proposition}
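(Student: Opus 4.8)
The identity to be proved is an equality of two pairings, one in the time domain (involving $u_{\mathcal V}$) and one in the frequency domain (involving $\psi^s_{\mathcal V}$). The natural strategy is to express both sides via the Fourier transform in $t$ and reduce everything to the statement that $\widetilde{u}_{\mathcal V}(x,\lambda;\omega) = \psi^s_{\mathcal V}(x,\lambda,\omega)$ for $\Im(\lambda) > r$, where $\widetilde{u}_{\mathcal V}$ denotes the Fourier transform of the causal distribution $u_{\mathcal V}$. First I would justify that $u_{\mathcal V}$, being a tempered distribution supported in $\{t \ge x\cdot\omega - C\} \subset \{t \ge -R\}$ for suitable $R$ (using the support property in Proposition~\ref{prop:sol_smooth_2} and the explicit formula \eqref{eq:U_delta}), has a Fourier-Laplace transform $\widetilde u_{\mathcal V}(x, \sigma+i\mu)$ that is, for fixed $\mu > r$ and as a function of $\sigma$, a tempered distribution depending holomorphically on the upper half-plane parameter; this is what makes the right-hand pairing meaningful. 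The pairing $\langle u_{\mathcal V}, \varphi\chi\rangle$ can then be rewritten using Parseval as $\frac{1}{2\pi}\langle \widetilde u_{\mathcal V}(x,\sigma), \varphi(x)\,\widetilde{\chi}(-\sigma)\rangle$, and shifting the contour into $\Im(\lambda) = \mu$ (legitimate by the holomorphy and the exponential bound coming from \eqref{eq:res_unif}) produces exactly the factor $(e^{\mu t}\chi)^\vee(\sigma)$ on the right-hand side. So the crux is the distributional identity $\widetilde u_{\mathcal V} = \psi^s_{\mathcal V}$.

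To establish $\widetilde u_{\mathcal V} = \psi^s_{\mathcal V}$ I would argue as follows. Applying the Fourier transform in $t$ to the wave equation \eqref{eq:basic_wave} satisfied by $U_{\mathcal V}$, and using $U_{\mathcal V} = \delta(t-x\cdot\omega) + u_{\mathcal V}$ together with $(\delta(t-x\cdot\omega))^{\sim} = e^{i\lambda x\cdot\omega}$, one finds that $\widetilde u_{\mathcal V}(\cdot,\lambda)$ solves $(H_{\mathcal V} - \lambda^2)\widetilde u_{\mathcal V}(\cdot,\lambda) = -\mathcal V(x,D) e^{i\lambda x\cdot\omega}$ for $\Im(\lambda) > r$; the manipulation is legitimate because for such $\lambda$ the transform converges nicely. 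Since by hypothesis \eqref{eq:res_unif} the resolvent $R_{\mathcal V}(\lambda)$ is a bounded operator on $L^2$ for $\Im(\lambda) \ge r$, the solution of this inhomogeneous Helmholtz equation in the relevant function space is unique, so $\widetilde u_{\mathcal V}(\cdot,\lambda) = -R_{\mathcal V}(\lambda)(\mathcal V(x,D)e^{i\lambda x\cdot\omega}) = \psi^s_{\mathcal V}(\cdot,\lambda,\omega)$. Here I would be careful about the mapping properties: $\mathcal V(x,D)e^{i\lambda x\cdot\omega}$ is compactly supported and smooth, hence in $L^2$, so $R_{\mathcal V}(\lambda)$ genuinely applies; and the a priori regularity/decay of $\widetilde u_{\mathcal V}$ needed to conclude uniqueness must be extracted from the structure of $U_{\mathcal V}$ in Propositions~\ref{prop:sol_smooth_1}--\ref{prop:H_delta_equivalence} and a Paley–Wiener argument in the spirit of Lemma~\ref{lemma:holomorphic_Fourier}.

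With these two pieces in hand—the Parseval/contour-shift reduction and the identification $\widetilde u_{\mathcal V} = \psi^s_{\mathcal V}$—the proposition follows by assembling them: take $\mu > r$, write the left-hand pairing as an integral over $\sigma$ of $\widetilde u_{\mathcal V}(x,\sigma+i\mu)$ against $\varphi(x)$ and the inverse transform of $e^{\mu t}\chi$, and substitute $\widetilde u_{\mathcal V} = \psi^s_{\mathcal V}$. The bookkeeping with the nonstandard Fourier conventions $\widetilde f(\lambda) = \int e^{i\lambda t} f \, dt$, $\breve F(t) = \frac{1}{2\pi}\int e^{-i\lambda t} F\, d\lambda$ needs care so that the factor $(e^{\mu t}\chi)^\vee(\sigma)$ comes out exactly as stated rather than with an extraneous constant or sign.

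**Main obstacle.** I expect the genuinely delicate step to be the rigorous justification that the formal Fourier-transform-in-$t$ manipulations are valid and that $\widetilde u_{\mathcal V}(\cdot,\lambda)$ lies in a space where the resolvent bound \eqref{eq:res_unif} forces uniqueness—i.e.\ controlling the growth of $u_{\mathcal V}$ in $t$ (only polynomial growth is automatic for tempered distributions, whereas one wants enough decay after the shift $\Im\lambda = \mu > r$ to land in $L^2$ in $x$ uniformly in $\sigma$). This is precisely where the causality of $u_{\mathcal V}$, the explicit progressing-wave structure from Proposition~\ref{prop:sol_smooth_2}, and a Paley–Wiener estimate à la Lemma~\ref{lemma:holomorphic_Fourier} must be combined; the rest of the argument is essentially contour shifting and Parseval bookkeeping.
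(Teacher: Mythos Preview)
Your approach is natural but differs from the paper's in a key structural respect, and the obstacle you correctly identify is precisely the one the paper's argument is designed to avoid.

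You propose to go \emph{from time to frequency}: Fourier--Laplace transform $u_{\mathcal V}$, observe that $\widetilde u_{\mathcal V}(\cdot,\lambda)$ solves $(H_{\mathcal V}-\lambda^2)\widetilde u_{\mathcal V}=-\mathcal V(x,D)e^{i\lambda x\cdot\omega}$, and invoke $L^2$-uniqueness via the resolvent bound \eqref{eq:res_unif} to conclude $\widetilde u_{\mathcal V}=\psi^s_{\mathcal V}$. The difficulty, as you note, is showing that $\widetilde u_{\mathcal V}(\cdot,\lambda)$ actually lies in $L^2(\R^n)$ for $\Im\lambda>r$; this requires a priori exponential-in-$t$ growth bounds on $u_{\mathcal V}$ with rate at most the particular $r$ appearing in \eqref{eq:res_unif}. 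Propositions~\ref{prop:sol_smooth_1}--\ref{prop:H_delta_equivalence} give only local $C^2$ regularity and say nothing about growth as $t\to\infty$, and Lemma~\ref{lemma:holomorphic_Fourier} goes in the opposite direction (from a holomorphic function with bounds to a causal distribution). So the tools you cite do not close this gap; one would need a separate energy estimate for the perturbed wave equation, with growth rate matched to $r$.

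The paper instead goes \emph{from frequency to time}. Starting from $\psi^s_{\mathcal V}(\cdot,\lambda,\omega)$, which is in $L^2(\R^n)$ by construction with the bound $\|\psi^s_{\mathcal V}\|_{L^2}\le C(1+|\lambda|)e^{\Im\lambda}$ coming directly from \eqref{eq:res_unif}, it pairs against $\varphi\in C^\infty_c(\R^n)$ to obtain a scalar holomorphic function $F_\varphi(\lambda)$, applies Lemma~\ref{lemma:holomorphic_Fourier} to produce a causal distribution $f_\varphi$ on $\R_t$, and then uses the Schwartz kernel theorem to assemble these into a distribution $K(x,t)$. Setting $v=e^{\mu t}K$, a direct computation shows $v$ satisfies the same inhomogeneous wave equation as $u_{\mathcal V}$ with the same causality, and then \emph{distributional uniqueness for the wave equation} (not Helmholtz/$L^2$ uniqueness) forces $v=u_{\mathcal V}$. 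This route never needs to know that $\widetilde u_{\mathcal V}\in L^2$; the uniqueness used is robust for arbitrary distributions supported in $\{t\ge -1\}$.

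In short: your plan is coherent but leaves its hardest step unresolved, whereas the paper's reversal of direction replaces that step with an estimate (on $\psi^s_{\mathcal V}$, not on $u_{\mathcal V}$) that is immediate from the hypothesis.
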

\begin{proof}
If \eqref{eq:res_unif} holds for some $\lambda$, then $z= \lambda^2$ is, by definition, in the resolvent set $\rho(H_{\mathcal V})\subset \C$ of the operator $H_{\mathcal V}$.  It is well known that the resolvent map $z \to (H_{\mathcal V} -z)^{-1}$ forms a holomorphic family of bounded $L^2$ operators in the open set $\rho(H_{\mathcal V})$ (see for example \cite[Theorem 2.15]{teschl}). Since the map $z= \lambda^2$ is also holomorphic and \eqref{eq:res_unif} holds for $\Im (\lambda) \ge r $, then $\lambda  \to R_{\mathcal V}(\lambda) = (H_{\mathcal V} -\lambda^2)^{-1}$ is also an holomorphic map for  $\Im (\lambda) \ge r $. 

On the other hand, using \eqref{eq:res_unif} we have
\begin{equation} \label{eq:scatt_sol_est}
\left\| \psi_{\mathcal V}^s (\cdot, \lambda, \omega)\right\|_{L^2(\R^n)}\leq C_{r, A, q}(1+ |\lambda|) e^{\Im (\lambda)}, \quad \Im (\lambda) \geq r.
\end{equation}
For any fixed $\varphi \in C^\infty_x(\R^n)$, define
\[
F_\varphi(\lambda)= \int_{\R^n} \psi^s_{\mathcal V}(x, \lambda, \omega)\varphi(x)dx, \quad \Im (\lambda)\geq r.
\]
 From the previous observations it follows that $F_\varphi(\lambda)$ must be an holomorphic function in the set $\Im (\lambda)\geq r $. By  estimate \eqref{eq:scatt_sol_est}, we get
\[
|F_{\varphi}(\lambda)|\leq C_{r,  \A, q}(1+ |\lambda|) e^{\Im (\lambda)}\left\| \varphi\right\|_{L^2}, \quad \Im (\lambda) \geq r.
\]
Then,  Lemma \ref{lemma:holomorphic_Fourier} implies that there is a function $f_\varphi \in \mathcal{D}^\prime(\R)$ supported on $[-1, \infty)$ such that for all $\mu>r$:
\[
\langle e^{-(\mu-r)t}f_\varphi, \chi \rangle = \langle F_\varphi (\cdot + i\mu), \breve{\chi} \rangle, \quad \chi \in C^\infty_c(\R).
\]

Now, given $\mu >r$, define the linear map $\mathcal K: C^\infty_c(\R^n) \to \mathcal D'(\R)$ given by
\[
\mathcal K \varphi  = e^{-(\mu- r)t} f_{\varphi}.
\]
The map $\mathcal{K}$ is continuous. To see this, take a sequence $\varphi_j \to 0$ in $C^\infty_c(\R^n)$, then \eqref{eq:scatt_sol_est}  implies that $F_{\varphi_j} \to 0$ when $\Im(\lambda) \ge r$, and hence 
\[
\langle \mathcal K \varphi_j , \chi \rangle  = \langle e^{-(\mu-r)t}f_{\varphi_j}, \chi \rangle = \langle F_{\varphi_j} (\cdot + i\mu), \breve{\chi} \rangle \to 0 \quad{as} \;\; j\to \infty.
\] 
Since $\mathcal{K}$ is continuous, the Schwartz kernel theorem ensures that there is a unique $K \in  \mathcal{D}'(\R^n\times \R)$ such that 
\begin{align}
\nonumber \langle K, \varphi(x)\chi (t)\rangle &= \nonumber \langle \mathcal K \varphi, \chi  \rangle = \langle e^{-(\mu -r)t} f_\varphi , \varphi(x), \chi \rangle = \langle F_\varphi(\cdot + i\mu)  ,\breve{\chi} \rangle \\
\label{eq:K_psi} &= \langle  \psi^s_{\mathcal V}(x, \sigma +i\mu, \omega) , \varphi(x) \breve{\chi}(\sigma)\rangle_{\R^n_x \times \R_\sigma}.
\end{align}
Since $f_\varphi$ is supported in $[-1,\infty)$, it follows that $K$ is supported in $\{t\ge 1\}$. We now define the distribution 
\[
v(x,t) := e^{\mu t} K(x,t) \in \mathcal{D}^\prime(\R^n\times \R).
\]
We claim that  $v$ is a solution in $\R^{n+1}$ of 
\begin{equation} \label{eq:lst}
(\square +  {\mathcal V}(x,D))v = -{\mathcal V}(x,D) \delta(t-x\cdot\omega) .
\end{equation}
Since, by \eqref{eq:basic_wave}, this is also the equation satisfied by $u_{\mathcal V}$, then the uniqueness  of distributional solutions of the wave equation supported in $\{t\ge -1\}$ (see \cite[Theorem 9.3.2]{H76}) implies that $u_{\mathcal V} = v$, so 
\[
\langle u_{\mathcal V}, \varphi(x)  \chi(t) \rangle  = \langle  K \varphi , \chi(x) e^{\mu t} \chi(t) \rangle  = \langle \psi_{\mathcal V}^s(x, \sigma + i\mu, \omega),  \varphi(x) (e^{\mu t} \chi )\ebreve (\sigma) \rangle_{\R^n_x \times \R_\sigma},
\] 
which finishes the proof of the proposition.

To prove the previous claim we use \eqref{eq:K_psi} in the following computations. First
\begin{multline*}
\langle \partial_t^2 (e^{\mu t} K ), \varphi(x) \chi(t) \rangle 
=  \langle  K , \varphi(x) e^{\mu t} \partial_t^2 \chi(t) \rangle\\
= \langle  \psi_{\mathcal V}^s(x,\sigma + i\mu,\omega), \varphi(x) (e^{\mu t} \partial_t^2 \chi(t)) \ebreve  \rangle= -\langle \psi_{\mathcal V}^s(x,\sigma + i\mu,\omega), \varphi(x) (\sigma + i\mu)^2 (e^{\mu t}  \chi) \ebreve (\sigma) \rangle_{\R^n_x \times \R_\sigma}.
\end{multline*}
Also, if we denote by $\mathcal V^*$ the formal adjoin of $ \ma V$ (with respect to the distribution pairing $\langle \cdot,\cdot\rangle$), we have
\begin{align*}
\langle {\mathcal V}(x,D) ( e^{\mu t} K ), \varphi(x) \chi(t) \rangle 
&=  \langle  K , {\mathcal V}^*(x,D) \varphi(x) e^{\mu t} \chi(t) \rangle \\
&= \langle  {\mathcal V}(x,D) \psi_{\mathcal V}^s(x,\sigma + i\mu,\omega), \varphi(x) (e^{\mu t}  \chi) \ebreve  (\sigma )\rangle_{\R^n_x \times \R_\sigma},
\end{align*}
and similarly one gets
\[
\langle \Delta_x( e^{\mu t} K ), \varphi(x) \chi(t) \rangle 
= \langle  \Delta_x \psi_{\mathcal V}^s(x,\sigma + i\mu,\omega), \varphi(x) (e^{\mu t}  \chi) \ebreve  (\sigma )\rangle_{\R^n_x \times \R_\sigma}.
\]
Then putting this together we obtain that
\begin{align*}
\langle (\partial_t^2    &- \Delta_x + {\mathcal V}(x,D)v, \varphi(x) \chi(t) \rangle \\
&= \langle (-\Delta_x + {\mathcal V}(x,D) -(\sigma + i\mu)^2 ) \psi_{\mathcal V}^s(x,\sigma + i\mu,\omega), \varphi(x) (e^{\mu t}  \chi) \ebreve  (\sigma )\rangle_{\R^n_x \times \R_\sigma} \\
&=  \langle - {\mathcal V}(x,D) e^{i(\sigma + i\mu)x \cdot \omega}, \varphi(x) (e^{\mu t}  \chi) \ebreve  (\sigma )\rangle_{\R^n_x \times \R_\sigma} \\
&= -\langle  e^{i(\sigma + i\mu)x \cdot \omega}, {\mathcal V}^*(x,D)\varphi(x) (e^{\mu t}  \chi) \ebreve  (\sigma )\rangle_{\R^n_x \times \R_\sigma} \\
&= -\langle e^{-\mu x\cdot \omega}\delta(t-x\cdot \omega), {\mathcal V}^*(x,D)\varphi(x) e^{\mu t}  \chi(t) \rangle \\
&= \langle -{\mathcal V}(x,D) \delta(t-x\cdot \omega),  \varphi(x)  \chi(t) \rangle .
\end{align*}
Hence $v$ satisfies \eqref{eq:lst}, which proves the claim.
\end{proof}

The following proposition gives in the self-adjoint case the  properties of the resolvent that we require to apply the previous Proposition. Therefore we assume that ${\mathcal V}(x,D)$ can be written as ${\mathcal V}(x,D) = 2 \A \cdot D+ D\cdot  \A+ q$ for real $ \A$ and $q$. 
\begin{Proposition} \sl  \label{prop:holomorph _contin_res}
Let $ \A\in C^1_c(\R^n, \R^n)$ and $q\in C^1_c(\R^n, \R)$. For any $\lambda \in \C_+\setminus i(0, r_0]$, there is a bounded operator
\[
R_{ \A,q}(\lambda): L^2(\R^n) \to L^2(\R^n)
\]
such that for any $f\in L^2(\R^n)$, the function $u=R_{ \A,q}(\lambda)f$ is the unique solution in $L^2(\R^n)$ of 
\[
(H_{ \A,q}-\lambda^2)u=f.
\]
Moreover, for  $r_0 = \max_{k=1,2}(2\norm{ \A}_{L^\infty}^2 + \norm{q}_{L^\infty})^{1/2}$, if $r>r_0$ one has
\begin{equation} \label{eq:est_r_r0}
\left\| R_{ \A,q}(\lambda) \right\|_{L^2\to L^2} \leq C_{r,  \A,q}, \quad \Im (\lambda)\geq r.
\end{equation}
For any $\delta >1 /2$ and $\lambda$ in the region $ \C_+\setminus i(0, r_0]$, the family
\[
\left( \br{x}^{-\delta}  R_{r,  \A,q}(\lambda) \br{x}^{-\delta} \right)_{\lambda \in  \C_+\setminus (0, r_0]}
\]
is a holomorphic family of bounded operators on $L^2(\R^n)$ that can be extended continuously in the weak operator topology to $\overline{\C}_+\setminus i[0, r_0]$.
\end{Proposition}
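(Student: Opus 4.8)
The plan is to establish the existence and norm bound \eqref{eq:est_r_r0} by a direct energy argument, and then to obtain the holomorphic family and its continuous boundary extension by combining limiting absorption for $-\Delta$ with a Neumann series / Fredholm perturbation argument. First I would prove \eqref{eq:est_r_r0}. Write $\lambda = a + ib$ with $b = \Im(\lambda) \ge r > r_0$, so $\lambda^2 = a^2 - b^2 + 2iab$, and note $\Im(\lambda^2) = 2ab$ while $\Re(\lambda^2) = a^2 - b^2$. Given $f \in L^2$ and a putative solution $u \in H^2$ of $(H_{\A,q} - \lambda^2)u = f$, pair with $\ol u$ and integrate by parts: since $H_{\A,q} = (D+\A)^2 + q$ is formally self-adjoint, $\br{H_{\A,q}u, u}$ is real and equals $\norm{(D+\A)u}_{L^2}^2 + \br{qu,u}$. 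Taking imaginary parts gives $-2ab\,\norm{u}_{L^2}^2 = \Im\br{f,u}$, and taking real parts gives $\norm{(D+\A)u}^2 + \br{qu,u} - (a^2-b^2)\norm{u}^2 = \Re\br{f,u}$. Estimating $\br{qu,u} \ge -\norm{q}_{L^\infty}\norm{u}^2$ and $\norm{(D+\A)u}^2 \ge 0$, and using $b^2 = \Im(\lambda)^2 \ge r^2 > r_0^2 \ge 2\norm{\A}_{L^\infty}^2 + \norm{q}_{L^\infty} \ge \norm{q}_{L^\infty}$, one gets $(b^2 - \norm{q}_{L^\infty})\norm{u}^2 \le \norm{f}\norm{u} + a^2\norm{u}^2$; combined carefully with the imaginary-part identity (which controls $|a|\,\norm{u}^2 \lesssim \norm{f}\norm{u}/b$ when $a \neq 0$) this yields $\norm{u}_{L^2} \le C_{r,\A,q}\norm{f}_{L^2}$. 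The same a priori estimate applied to the adjoint equation gives injectivity of $H_{\A,q}-\lambda^2$ on $H^2$ and, via a standard functional-analytic argument (the operator is closed with closed range and trivial cokernel), surjectivity; hence $R_{\A,q}(\lambda)$ exists and satisfies \eqref{eq:est_r_r0}.

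Next I would treat general $\lambda \in \C_+ \setminus i(0,r_0]$. Here self-adjointness of $H_{\A,q}$ guarantees that $\lambda^2 \notin \sigma(H_{\A,q}) \subset \R$ for every $\lambda \in \C_+$ with $\Re(\lambda) \neq 0$, and for $\lambda = ib$ with $b > r_0$ we just showed $\lambda^2 = -b^2$ is in the resolvent set; so $R_{\A,q}(\lambda) = (H_{\A,q}-\lambda^2)^{-1}$ is a well-defined bounded operator on all of $\C_+ \setminus i(0,r_0]$, and it is holomorphic there because $z \mapsto (H_{\A,q}-z)^{-1}$ is holomorphic on $\rho(H_{\A,q})$ and $\lambda \mapsto \lambda^2$ is holomorphic. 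The uniqueness of the $L^2$ solution is immediate from invertibility. For the weighted estimate and the continuous extension to the boundary $\ol{\C}_+ \setminus i[0,r_0]$, I would write the resolvent identity relating $R_{\A,q}(\lambda)$ to the free resolvent $R_0(\lambda) = (-\Delta - \lambda^2)^{-1}$. Expanding $H_{\A,q} = -\Delta + \mathcal V(x,D)$ with $\mathcal V(x,D) = 2\A\cdot D + D\cdot\A + q$ a compactly supported first order operator, one has formally
\[
R_{\A,q}(\lambda) = R_0(\lambda)\big(I + \mathcal V(x,D) R_0(\lambda)\big)^{-1}.
\]
The classical limiting absorption principle for $-\Delta$ (Agmon) says $\br{x}^{-\delta} R_0(\lambda) \br{x}^{-\delta}$, for $\delta > 1/2$, is a holomorphic family on $\C_+$ extending continuously in the weak operator topology (indeed in norm as operators $L^2_\delta \to L^2_{-\delta}$) to $\ol{\C}_+ \setminus \{0\}$, with values mapping into $H^2_{loc}$. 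Since $\mathcal V(x,D)$ has compact support in $B$, the operator $\mathcal V(x,D) R_0(\lambda) \br{x}^{-\delta}$ — first order derivatives landing on the $H^2_{loc}$ output, then multiplication by compactly supported coefficients — is compact on $L^2$ and depends continuously on $\lambda$ up to the boundary away from $0$. On $\ol{\C}_+ \setminus i[0,r_0]$ we know $I + \mathcal V(x,D)R_0(\lambda)$ is invertible on the open part (that is precisely the invertibility of $H_{\A,q}-\lambda^2$), so by analytic Fredholm theory and continuity of the compact perturbation, $(I + \mathcal V(x,D)R_0(\lambda))^{-1}$ — hence $\br{x}^{-\delta} R_{\A,q}(\lambda) \br{x}^{-\delta}$ — extends continuously in the weak operator topology to $\ol{\C}_+ \setminus i[0,r_0]$, provided no eigenvalue is created on the real axis there; absence of positive embedded eigenvalues for $H_{\A,q}$ (Kato's theorem / unique continuation, valid for $C^1$ compactly supported coefficients) rules this out, and the points on $i\R$ below $ir_0$ are excluded by hypothesis.

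The main obstacle I expect is making the Fredholm/analytic-continuation step fully rigorous on the closed half-plane: one must verify that $\mathcal V(x,D)R_0(\lambda)$, sandwiched by weights, is genuinely a continuous family of \emph{compact} operators up to the boundary (using the mapping properties of the Agmon limiting resolvent together with compactness of the embedding $H^1(B') \hookrightarrow L^2(B')$ for a ball $B' \supset \supp \mathcal V$), and that $I$ plus this family is invertible at every boundary point of $\ol{\C}_+ \setminus i[0,r_0]$ — the only subtlety being the exclusion of real embedded eigenvalues, which is where one invokes the unique continuation principle appropriate to first order perturbations with the stated regularity. Everything else (the energy estimate, holomorphy in the interior, uniqueness) is routine. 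I would present the energy estimate in full and then cite the limiting absorption principle and analytic Fredholm theory, sketching only the compactness and the no-embedded-eigenvalues points.
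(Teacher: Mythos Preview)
Your proposal is correct and reaches the same conclusions, but the route differs in two places from the paper's proof. First, for the uniform bound \eqref{eq:est_r_r0} and the existence of $R_{\A,q}(\lambda)$ on $\C_+\setminus i(0,r_0]$, the paper does not run a direct a priori estimate on $(H_{\A,q}-\lambda^2)u=f$; instead it identifies the spectrum. It shows via a single quadratic-form estimate on an eigenfunction $\psi$ (Young's inequality applied to $2\|\A\|_{L^\infty}\|\psi\|\|\nabla\psi\|$) that any eigenvalue lies in $[-r_0^2,0]$, combines this with the standard facts that the continuous spectrum of a short-range perturbation is $[0,\infty)$ with no embedded positive eigenvalues, and then simply invokes the self-adjoint bound $\|R_{\A,q}(\lambda)\|\le 1/\dist(\lambda^2,[-r_0^2,\infty))$. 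This bypasses the case analysis in $a=\Re\lambda$ that your ``combined carefully'' step would require and gives \eqref{eq:est_r_r0} in one line. Second, for the continuous extension to $\overline{\C}_+\setminus i[0,r_0]$, the paper does not rebuild the argument via the free resolvent and analytic Fredholm theory; it directly cites the limiting absorption principle for short-range magnetic Schr\"odinger operators (H\"ormander, Theorem 30.2.10, together with the embedding of the H\"ormander space $B$ into $\langle x\rangle^{-\delta}L^2$). Your Fredholm reconstruction is a legitimate and more self-contained alternative, and correctly isolates the only genuine analytic input (absence of embedded eigenvalues), but the paper treats this as a black-box citation.
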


\noindent  Let $\lambda \ge 0$. As mentioned in the introduction, the direct problem  \eqref{eq:sta_sct}  will have a unique scattering solution  $\psi^s_{\mathcal V}$ satisfying the $SRC$ if the {\it outgoing} resolvent operator
\begin{equation*}
(H_{\mathcal V} -(\lambda^2 + i0) )^{-1} = \lim_{\varepsilon \to 0} (H_{\mathcal V} -(\lambda^2 + i\varepsilon) )^{-1} ,
\end{equation*}
is bounded in appropriate spaces. Then one can take
\[   
\psi^s_{\mathcal V} =  (H_{\mathcal V} -(\lambda^2 + i0)^2)^{-1}(-{\mathcal V}(x,D) e^{i\lambda \omega \cdot x}),
\]
as solution of  \eqref{eq:sta_sct}. 
Under the assumptions of the previous proposition, for  real $\lambda >0$ the operator $R_{  \A,q}(\lambda)$  given by the continuous extension to $\overline{\C}_+\setminus i[0, r_0]$ of the resolvent is exactly the outgoing resolvent  operator. On the other hand, if $\lambda <0 $ the resolvent $R_{  \A,q}(\lambda)$ is the operator   known as the {\it incoming} resolvent operator. This can be seen in the following computation: for real $\lambda \neq 0$,  taking the limits in an appropriate topology, one has 
\begin{multline*}
 R_{  \A,q}(\lambda) = \lim_{\varepsilon \to 0} R_{  \A,q}(\lambda +i\varepsilon) = \lim_{\varepsilon \to 0} (H_{ \A,q} -(\lambda +i\varepsilon)^2)^{-1} \\ 
 =  \lim_{\varepsilon \to 0} (H_{ \A,q} -\lambda^2 -i2\lambda \varepsilon -\varepsilon^2 )^{-1} = (H_{ \A,q} -(\lambda^2 \pm i0))^{-1},
 \end{multline*}
 where the $\pm$ is given by the sign of $\lambda$. 
\begin{proof}[Proof of Proposition $\ref{prop:holomorph _contin_res}$]
We have that 
\[ H_{ \A,q} = (D+ \A)^2 +q = -\Delta + {\mathcal V}(x,D),\]
where ${\mathcal V}(x,D) = 2 \A \cdot D+ D\cdot  \A+ q$.
The operator $H_{ \A,q}$ is self-adjoint with domain $H^2(\R^n)$ and, as such, it has real spectrum: the resolvent $R_{ \A,q}(\lambda)$ is a bounded operator in $L^2$ if $\lambda \in \C_+$ satisfies $\lambda^2 \notin \R$. Also, since $ \A$ and  $q$ are compactly supported---${\mathcal V}(x,D)$ is a short range perturbation of $-\Delta$---it is well known that  the continuous spectrum of $H_{ \A,q}$ is $(0,\infty)$ without embedded eigenvalues (see, for example, \cite[Chapter 14]{Hormander}). 
We now show that the point spectrum of $H_{ \A,q}$ is contained in $[-r_0^2,0]$ where $r_0^2= 2\norm{ \A}_{L^\infty}^2+ \norm{q}_{L^\infty}$. Indeed, assume that $\lambda^2 \in \R$ and $\psi \in L^2$ are such that 
\[
H_{ \A,q}\psi = \lambda^2 \psi,
\]
is satisfied in the sense of distributions. Then $\psi \in H^2(\R^n)$ by elliptic regularity, so taking the $ L^2$ product with ${\psi}$ and  integrating by parts gives us
\begin{multline*} 
\lambda^2 \norm{\psi}^2= \norm{\nabla \psi}^2 + (  \A\cdot D\psi, \psi )_{L^2} + (  \A\psi, D\psi )_{L^2} + ( ( \A^2+q)\psi, \psi )_{L^2}\\
 \geq \norm{\nabla \psi}^2 - 2 \norm{ \A}_{L^\infty} \norm{\psi}\norm{\nabla \psi} - \norm{q}_{L^\infty} \norm{\psi}^2  \geq \frac{1}{2}\norm{\nabla \psi}^2 - (2\norm{ \A}_{L^\infty}^2 + \norm{q}_{L^\infty})\norm{\psi}^2,
 \end{multline*}
where we have used that $ \A^2 \ge 0$ and  Young's inequality with $\eps$. Hence
\[
(\lambda^2 +2\norm{ \A}_{L^\infty}^2 + \norm{q}_{L^\infty})\norm{\psi}^2 \geq \frac{1}{2}\norm{\nabla \psi}^2 \geq 0,
\]
and thus  we must necessarily have $\lambda^2 \ge -(2\norm{ \A}_{L^\infty}^2 + \norm{q}_{L^\infty})$. 
With this we can conclude that the full spectrum of $H_{ \A,q}$ is contained in $[-r_0^2,\infty)$, so that $R_{ \A,q}(\lambda)$ is a bounded operator in $L^2$ for all $\lambda \in \C_+ \setminus i(0, r_0]$.  Then the theory of self-adjoint operators implies two important facts. 

The first is that one has the estimate
\begin{equation} \label{eq:self_ad_est}
\norm{R_{ \A,q}(\lambda)}_{L^2 \to L^2} \le \frac{1}{\dist (\lambda^2,[-r_0^2,\infty) )} ,
\end{equation}
(see, for example, \cite[Theorem 2.15]{teschl}).  And the second is that $R_{ \A,q}(\lambda): L^2 \to H^2$ is an holomorphic family of operators for $\lambda \in \C_+$. This last statement follows from the fact that, outside the spectrum, for all $\lambda,\lambda _0 \in \C_+ \setminus i(0, r_0]$ one has the resolvent formula 
\begin{equation*} 
 R_{ \A,q}(\lambda) = R_{ \A,q}(\lambda_0) \left(\sum_{j=0}^{m} (\lambda^2-\lambda^2_0)^j R_{ \A,q}^{j}(\lambda_0) + (\lambda^2-\lambda^2_0)^{m+1} R_{ \A,q}^{m}(\lambda_0)R_{ \A,q}(\lambda)\right),
\end{equation*}
see for example \cite[p. 75]{teschl}. One can take the limit $m \to \infty$ in the $L^2 \to L^2$ operator norm to obtain an analytic expansion of the resolvent around $\lambda_0$, since the remainder of the series goes to zero if $\lambda$ is close enough to $\lambda_0$. Then,
\begin{equation} \label{eq:analytic_ex}
 R_{ \A,q}(\lambda) = R_{ \A,q}(\lambda_0) \sum_{j=0}^{\infty} (\lambda^2-\lambda^2_0)^j R_{ \A,q}^{j}(\lambda_0),
\end{equation}
for $\lambda$ close enough to $\lambda_0$.  Since $R_{ \A,q}(\lambda)$ is also bounded from $L^2$ to $H^2$, \eqref{eq:analytic_ex} implies that $   R_{ \A,q}(\lambda)  : L^2 \to H^2$  is holomorphic in $\C_+ \setminus i(0, r_0]$, and therefore so it is
\[  \br{x}^{-\delta}  R_{ \A,q}(\lambda)  \br{x}^{-\delta} : L^2 \to H^2 \]
since the weight $\br{x}^{-\delta}$ and all its derivatives are bounded in $\R^n$.

We now prove \eqref{eq:est_r_r0} using \eqref{eq:self_ad_est}.  To see this write $\lambda = \sigma + i \mu$. Then it is enough to use that $\dist (\lambda^2,[-r_0^2,\infty) ) \ge |\Im(\lambda^2)| \ge  2|\sigma \mu|$ when $\mu^2 \ge \sigma^2-r_0$, and that
 $\dist (\lambda^2,[-r_0^2,\infty) ) \ge |\Re(\lambda^2)| =  |\sigma^2-\mu^2- r_0|$ otherwise.
 
The continuity  of $ \br{x}^{-\delta}  R_{ \A,q}(\lambda)  \br{x}^{-\delta} $ for  $ \lambda \in \ol{\C}_+ \setminus i[0,r_0]$  it is the well known limiting absorption principle. See for example \cite[Proposition 1.7.1]{Yafaev} for the free resolvent, and  \cite[Chapter 14]{Hormander} for the case of short range magnetic potentials (as in this case). A more specific statement of the limiting absorption principle (also including long range magnetic potentials) can be found in \cite[Theorem 30.2.10]{Hormander}, which  implies  that $R_{ \A,q}(\lambda)$ is continuous as a function from $  \ol{\C}_+ \setminus i[0,r_0]$ to the space  of bounded operators between the Hörmander spaces $B$ and $B^*$ considered with the weak operator topology. Since $B$ is continuously embedded in $ \br{x}^{-\delta} L^2$ this implies the (weak) continuity  of $ \br{x}^{-\delta}  R_{ \A,q}(\lambda)  \br{x}^{-\delta} $ in $\ol{\C}_+ \setminus i[0,r_0]$.
\end{proof}

Putting together Propositions \ref{prop:equivalence_data} and \ref{prop:holomorph _contin_res} we can now formalize the heuristic argument given at the beginning  of this section in order to prove Theorem \ref{thm:equivalence}.

\begin{proof}[Proof of Theorem \ref{thm:equivalence}]
Let $r_0 = \max_{k=1,2}(2\norm{\A_k}_{L^\infty}^2 + \norm{q_k}_{L^\infty})^{1/2}$. 
By Proposition \ref{prop:holomorph _contin_res}, for all $\lambda \in \overline{\C}_+ \setminus i[0,r_0]$ we can define
\[
\psi_{\A_k,q_k}^s(\cdot,\lambda,\omega) =- R_{\A_k,q_k} ((\A_k^2 + D \cdot \A_k + 2\lambda \omega \cdot A + q_k )e^{i\lambda x \cdot \omega} ).
\] 
Assume first that $a_{\A_1,q_1}(\lambda,\theta,\omega) = a_{\A_2,q_2}(\lambda,\theta,\omega)$ for all $\lambda \in \R$ such that $\lambda \ge \lambda_0$ and all $\theta \in S^{n-1}$. Recall that $a_{\A_k,q_k}(\lambda,\theta,\omega)$ are defined by the asymptotic expansion
\[ 
\psi^s _{\A_k,q_k}(r\theta,\lambda,\omega) = e^{i\lambda r} r^{-\frac{n-1}{2}} a_{\A_k,q_k}(\lambda,\theta,\omega)+o(r^{-\frac{n-1}{2}}), \qquad r\to \infty \; \; k=1,2 .
\] 
Therefore, since $\A_k$ and $q_k$ are supported in ${B}$,  for any fixed $\lambda \ge \lambda_0$ the function $\psi_{\A_1,q_1} -\psi_{\A_2,q_2}$ satisfies
\begin{align*}
(-\Delta - \lambda)(\psi_{\A_1,q_1} -\psi_{\A_2,q_2}) (\cdot,\lambda,\omega) &= 0 \quad \text{in} \; \;  \R^n \setminus \overline{B} \\
(\psi_{\A_1,q_1} -\psi_{\A_2,q_2}) (x,\lambda,\omega) &= o(|x|^{-\frac{n-1}{2}}) \quad \text{as}  \; \; |x|\to \infty.
\end{align*}
The Rellich uniqueness theorem  implies that $\psi_{\A_1,q_1} -\psi_{\A_2,q_2}$ vanishes outside $\overline{B}$. In particular, for any $\varphi \in C^\infty_c(\R^n\setminus \ol{B})$, the function
\begin{equation} \label{eq:not_w}
 w_{\varphi}|_{[\lambda_0,\infty)}(\lambda) = \langle (\psi_{\A_1,q_1} -\psi_{\A_2,q_2})(\cdot,\lambda,\omega) ,\varphi\rangle_{\R^n_x} ,
\end{equation}
satisfies
\[
w_{\varphi}|_{[\lambda_0,\infty)} =0.
\]
By Proposition \ref{prop:holomorph _contin_res}, the map $\lambda \to w_{\varphi}$ is holomorphic in ${\C}_+ \setminus i(0,r_0]$ and continuous  in $\overline{\C}_+\setminus i[0,r_0]$. Since it vanishes on $[\lambda_0,\infty)$, we must have $w_\varphi =0$. In particular  
for any $\mu> r_0$ and $\sigma \in \R$ one has  
\[
\langle (\psi_{\A_1,q_1} -\psi_{\A_2,q_2})(x,\sigma + i\mu ,\omega) ,\varphi(x)\rangle = 0.
\]
Then Proposition  \ref{prop:equivalence_data} implies that
\[
\langle (u_{\A_1,q_1} -u_{\A_2,q_2})(x,t ;\omega) ,\varphi(x)\chi(t)\rangle_{\R^n_x \times\R_t} = 0.
\]
for all $\varphi \in C^\infty_c(\R^n\setminus \ol{B})$ and $\chi \in C^\infty_c(\R)$. Therefore 
\[
(u_{\A_1,q_1} -u_{\A_2,q_2})(x,t ;\omega) = 0 \quad (x,t) \in \R^n \times \overline{B}.
\]

Let us prove the converse statement. Let's take the following coordinates in $\R^n$, used in previous sections. If $x \in \R^n$ we write $x= (y,z)$, where $z = x\cdot \omega$ and $y\in \R^{n-1}$.

Assume that $u_{\A_1,q_1}(x,t; \omega) = u_{\A_2,q_2}(x,t; \omega)$ for $(x,t) \in (\partial B \times \R) \cap \{ t \ge z \}$. By Proposition \ref{prop:sol_smooth_2}, the function $\alpha = u_{\A_1,q_1} -u_{\A_2,q_2}$ solves
\begin{equation*}
\square \alpha =0 \qquad \text{in} \;\; \{(x,t): |x|>1 \; \text{and} \; t>z\}, 
\end{equation*} 
and $\alpha|_{(\partial B \times \R) \cap \{ t \ge z \}} = 0 $ and satisfies all the conditions required to apply Lemma \ref{lemma:normal_estimate}. Thus, this lemma yields  that one also has $\partial_\nu \alpha|_{(\partial B \times \R) \cap \{ t \ge z \}} = 0 $. Now, the Cauchy data of $\alpha$ vanishes on the lateral boundary of the set $\{(x,t): |x| \ge 1  \text{ and } t \ge z\}$, and Holmgren's uniqueness theorem   applied in this set shows that $\alpha$ is identically zero in the relevant domain of dependence. However, by finite speed of propagation the support of $\alpha$ is contained in the same domain of dependence. Thus $\alpha$ is identically zero in $\{(x,t): |x| \ge 1  \text{ and } t \ge z\}$, which implies that 
\[u_{\A_1,q_1}(x,t; \omega) = u_{\A_2,q_2}(x,t; \omega), \qquad (x,t) \in (\R^n \setminus \ol B) \times \R.\]
The relation in Proposition \ref{prop:equivalence_data} gives that for any $\mu > r_0$ and for any $\varphi \in C^\infty_c(\R^n \setminus \ol B)$
\[
 w_\varphi(\sigma +i\mu) =  \langle (\psi_{\A_1,q_1} -\psi_{\A_2,q_2})(x,\sigma +i\mu,\omega) ,\varphi(x)\rangle_{\R^n_x} = 0  \qquad \sigma \in \R,
 \]
using the notation in \eqref{eq:not_w}. A mentioned previously, $w_\varphi$ is holomorphic in ${\C}_+ \setminus i(0,r_0]$ and has a continuous extension to $\ol{\C}_+ \setminus i[0,r_0]$ so, in particular, it follows that 
\[ w_\varphi(\lambda) = \langle (\psi_{\A_1,q_1} -\psi_{\A_2,q_2})(\cdot,\lambda,\omega) ,\varphi\rangle_{\R^n_x} = 0
\]
 for all $\lambda >0$. Thus $(\psi_{\A_1,q_1} -\psi_{\A_2,q_2})(\cdot,\lambda,\omega)$ vanishes outside $\ol{B}$ for any $\lambda >0$. By the asymptotics  given in \eqref{eq:asymp_psi}, this implies that $a_{\A_1,q_1}(\lambda,\theta,\omega)=a_{\A_2,q_2}(\lambda,\theta,\omega)$ for all $\lambda >0 $ and $\theta \in S^{n-1}$. 
\end{proof}

\section{ Some results concerning the wave operator} 
 \label{appendix:wave_eq}

The first part of this section is devoted to the proof of Propositions \ref{prop:sol_smooth_1} and \ref{prop:sol_smooth_2}. In the second part we prove Lemmas \ref{lemma:energy_T}-\ref{lemma:normal_estimate}.

       \subsection{Existence and uniqueness of solutions} \label{subsec:existence}
Here we complete the proof of Proposition \ref{prop:sol_smooth_1} and Proposition \ref{prop:sol_smooth_2}. Let $\omega\in S^{n-1}$. In what follows, we write $\p_z=\omega\cdot \nabla$. We first prove the existence and uniqueness of distributional solutions to
\begin{equation}\label{eq:delta_ex_un}
(\partial_t^2 + L_{\W,V}) U_\delta =0 \; \text{in}\; \R^{n+1}, \quad U_\delta|_{\left\{t<-1\right\}}= \delta (t-x\cdot \omega)
\end{equation}
and
\begin{equation}\label{eq:Heaviside_ex_un}
(\partial_t^2 + L_{\W,V}) U_H =0 \; \text{in}\; \R^{n+1}, \quad U_H|_{\left\{t<-1\right\}}= H (t-x\cdot \omega).
\end{equation}
We start by proving the uniqueness. In both cases, it is reduced to prove that zero is the unique distributional solution to the following homogenous equation
\begin{equation*} 
(\partial_t^2 + L_{\W,V})\, \mathcal{U} =0 \; \text{in}\; \R^{n+1}, \quad \mathcal{U} \, |_{\left\{t<-1\right\}}=0,
\end{equation*}
which is true by \cite[Theorem 23.2.7]{Hormander}. Let us now prove the existence of solutions. The method we shall use is the so-called progressing wave expansion method, see for example \cite[Lemma 1]{shiota} and \cite[Theorem 1]{RakeshUhlmann}. For any $j\geq 0$, define
\[
s_+^j = \left\{\begin{matrix}
 s^j , &s\geq 0, \\ 
 0 , &s<0.
\end{matrix}\right.
\]
Note that $s^0_+=H(s)$ is the unidimensional Heaviside function at $s\in \R$. Let $N\in\mathbb{N}$ and suppose that the solutions to  $(\partial_t^2 + L_{\W,V}) U =0$ have the following expansion
\begin{equation}\label{pr:wave_exp}
U (x,t)= a_{-1}(x) \delta (t-x\cdot \omega) + \sum_{j=0}^N a_j (x)(t- x\cdot \omega)_+^j + R_N(x,t).
\end{equation}
In the case of \eqref{eq:delta_ex_un} the  coefficients $(a_j)_{j=-1}^N$  and the remainder term $R_N$ must satisfy the following initial value conditions  
\[
 R_N|_{t<-1}=0, \quad a_{-1}|_{x\cdot \omega<-1}=1, \quad a_{j}|_{x\cdot \omega<-1}=0, \quad j=0, 1, \dots, N
\]
and in the case of \eqref{eq:Heaviside_ex_un}, the conditions
\[
 R_N|_{t<-1}=0, \quad a_{0}|_{x\cdot \omega<-1}=1, \quad a_{j}|_{x\cdot \omega<-1}=0, \quad j=-1, 1, \dots, N.
\]
A straightforward computation shows that the remainder term $R_N$ must satisfy 
\begin{equation}\label{id_recursive_formulae}
\begin{aligned}
(\partial_t^2+ L_{\W,V})R_N(x,t)&= -2\left(( \partial_z- \omega\cdot\W)a_{-1}\right)\partial_t \delta (t-x\cdot \omega)\\
& \quad  \quad- \left(2(\p_z-\omega\cdot\W)a_0+L_{\W,V}a_{-1}\right)\delta(t-x\cdot \omega)\\
&\quad  \quad - \sum_{j=0}^{N-1}\left(2(j+1)( \partial_z- \omega\cdot\W)a_{j+1}+  L_{\W,V} \, a_j \right)(t-x\cdot \omega)^j_+\\
& \quad \quad  -\left( L_{\W,V}\, a_N\right)(t-x\cdot \omega)^N_+.
\end{aligned}
\end{equation}
The task now is to prove the existence of the coefficients $(a_j)_{j=-1}^N$ and $R_N$, satisfying the recursive identity \eqref{id_recursive_formulae} with the corresponding initial value conditions. One expects getting smoother remainder terms $R_N$ as $N$ grows, or at least with better regularity than the Delta distribution and Heaviside function. This can be achieved by killing most the non-smooth terms on the right-hand side of \eqref{id_recursive_formulae}. We now split the proof into two cases depending on the nature of the initial value conditions.\\

\noindent {\it{First case. Existence of solutions of \eqref{eq:delta_ex_un}}}. Above discussion motivates choosing the recursive formulae 
\begin{alignat}{3}
(\partial_z- \omega\cdot\W)a_{-1}&=0,    & \quad &\text{in}\; \R^n,& \quad  a_{-1}|_{x\cdot \omega<-1}&=1,\label{eq:remainder-1}\\
(\partial_z- \omega\cdot\W)a_{0}&=-\frac{1}{2} L_{\W,V}a_{-1} & \quad &\text{in}\; \R^n, & \quad   a_{0}|_{x\cdot \omega<-1}&=0,\nonumber  \\
( \partial_z- \omega\cdot\W)a_{k+1}&=-\frac{1}{2(k+1)} L_{\W,V}\, a_k, &\quad &\text{in}\; \R^n,& \quad  a_{k+1}|_{x\cdot \omega<-1}&=0,\label{eq:remainder0}\\
(\partial_t^2+ L_{\W,V})R_N(x,t)&= -\left( L_{\W,V}\, a_N\right)(t-x\cdot \omega)^N_+, & \quad &\text{in}\; \R^{n+1}, & \quad R_N|_{t<-1}&=0,\label{eq:remainder}
\end{alignat}
where $k=0, 1, \dots, N-1$. By standard ODEs techniques, one can prove that if 
\[
\psi(x)=\int_{-\infty}^0 \omega \cdot\W(x+s\omega)\, ds,
\]
then we have in $\R^n$ for $k=0, 1, \dots,  N-1$: 
\begin{equation}\label{id:coefficients}
\begin{aligned}
    a_{-1}(x)&= e^{\psi(x)}, \\
    a_0(x)&=-\frac{1}{2}e^{\psi(x)}\int_{-\infty}^0 e^{-\psi(x+s\omega)} (L_{\W,V}a_{-1})(x+s\omega)\,ds\\
    &  = -\frac{1}{2} e^{  \psi(x)} \int_{-\infty}^{0} \left [ -\Delta \psi - |\nabla \psi|^2 +  2 \W \cdot \nabla \psi + V \right ](x +s\omega) \, ds,\\
     a_{k+1}(x)&= -\frac{1}{2(k+1)}e^{\psi(x)}\int_{-\infty}^0 e^{-\psi(x+s\omega)} (L_{\W,V}\,a_k)(x+s\omega)\,ds.
\end{aligned}
\end{equation}
It remains to prove the existence of solutions to \eqref{eq:remainder}. To do that, assume that
\[
\W\in C^{\M+2}_c(\R^n) \quad \text{and} \quad  V\in C^{\M}_c(\R^n)
\]
for some $\M\in \mathbb{N}$ large enough which will be fixed later. Note that
\[
\norm{a_{-1}}_{L^\infty}\lesssim \norm{\W}_{C^{\M+2}},
\]
and for $j=0, 1, \ldots, N$
\[
\norm{a_j}_{L^\infty}\lesssim \norm{\W}_{C^{\M-2j}} + \norm{V}_{C^{\M-2j}}.
\]
Setting 
\[
\beta= \min\bk{\M-2N-2, N-1}
\]
we deduce that $\left( L_{\W,V}\, a_N\right)(t-x\cdot \omega)^N_+$ belongs to $C^{\beta}( \mathbb{R}^{n+1})\subset H^{\beta_1}_{loc}(\mathbb{R}; H^{\beta_2}(\mathbb{R}^n))$ with $\beta_j\geq 0$ (both will be fixed later) and $\beta_1+\beta_2=\beta$. By \cite[Theorems 9.3.1 and 9.3.2]{H76}, there exists a unique solution $R_N\in H^{\beta_1+1}_{loc}(\mathbb{R}; H^{\beta_2}(\mathbb{R}^n))$ to  \eqref{eq:remainder} such that for any given $T>-1$ we have
\[
\left\|R_N\right\|_{H^{\beta_1+1}((-1, T]; H^{\beta_2}(\mathbb{R}^n))}\lesssim \left\|\left( L_{\W,V}\, a_N\right)(t-x\cdot \omega)^N_+\right\|_{H^{\beta_1}((-1, T]; H^{\beta_2}(\mathbb{R}^n))}.
\]
We claim that $R_N\in C^2(\R^{n+1})$ by suitably choosing the parameters  $\M$, $N$, $\beta_1$, and $\beta_2$. This regularity is needed to apply, for instance,  the Carleman estimate with boundary terms in Proposition \ref{prop:carleman_estimate} and the estimate in Lemma \ref{lemma:w+w-}. By \cite[Theorem B.2.8/Vol III]{Hormander}, this follows if for instance
\[
\frac{n+3}{2}< \beta_1 + \beta_2=\beta \quad \text{and} \quad \frac{3}{2}<\beta_1,
\]
and furthermore
\begin{equation}\label{sob_emb} 
\begin{aligned}
\norm{R_N}_{C^2((-1,T]\times \R^n)} & \lesssim \left\|R_N\right\|_{H^{\beta_1+1}((-1, T]; H^{\beta_2}(\mathbb{R}^n))}\\
& \lesssim \left\|\left( L_{\W,V}\, a_N\right)(t-x\cdot \omega)^N_+\right\|_{H^{\beta_1}((-1, T]; H^{\beta_2}(\mathbb{R}^n))}.
\end{aligned}
\end{equation}

Equating the two parameters involved in the definition of $\beta$, that is, $\M-2N-2=N-1$; we choose $\M=3N+1$, and hence $\beta=N-1$. We distinguish two cases:
\begin{itemize}
\item When $n$ is even  we consider 
\[
N =\frac{n+6}{2}, \quad \M=\frac{3}{2}n +10, \quad  \beta_1=2, \quad \beta_2=\frac{n}{2}. 
\]
\item When $n$ is odd we consider 
\[
N =\frac{n+7}{2}, \quad \M=\frac{3}{2}(n+1) +10, \quad  \beta_1=2, \quad \beta_2=\frac{n+1}{2}. 
\]
\end{itemize}
The desired claim is proved by combining above choices with \eqref{sob_emb}. On the other hand, by \eqref{pr:wave_exp} and \eqref{id:coefficients}, we deduce that $U_\delta$ can be written as follows
\[
U_\delta(x,t)= e^{\psi(x)} \delta(t-x\cdot \omega) + v(x,t)H(t-x\cdot \omega),
\]
where
\[
v(x,t)=  \sum_{j=0}^N a_j (x)(t- x\cdot \omega)_+^j + R_N(x,t) 
\]
is of class $C^2$  in the region $\bk{t\geq x\cdot \omega}$.  This   shows that $v$ satisfies all the properties stated in Proposition \ref{prop:sol_smooth_2}, and hence the proof of existence and uniqueness of solutions to \eqref{eq:delta_ex_un} is completed. \\

\noindent {\it{Second case. Existence of solutions of \eqref{eq:Heaviside_ex_un}}}. This case is quite similar to the previous one, and hence we only give a brief explanation of the proof. Here we have $a_{-1}=0$ and according to identity \eqref{id_recursive_formulae}, the function $a_0$ must satisfy \eqref{eq:remainder-1} in place of $a_{-1}$. The remaining coefficients $a_k$ satisfy \eqref{eq:remainder0}. The remainder term $R_N$ satisfies the same equation as in \eqref{eq:remainder}, and thus its existence is guaranteed by  \cite[Theorems 9.3.1 and 9.3.2]{H76}. In this case, the solution $U_H$ to \eqref{eq:Heaviside_ex_un} has the form
\[
U_H(x, t)=u(x,t) H(t- x\cdot \omega), \quad u(x,t)=  \sum_{j=0}^N a_j (x)(t- x\cdot \omega)_+^j + R_N(x,t),
\]
where $u$ is of class $C^2$ in the region $\bk{t\geq x\cdot \omega}$. Moreover, it satisfies all the desired properties stated in Proposition \ref{prop:sol_smooth_1}.

\subsection{Proof of the energy lemmas} \label{subsec:energy}

The proofs of the following lemmas are mainly based on standard multiplier techniques. Here $\text{div}_{\mathfrak{m}}$ and $\nabla_{\mathfrak{m}}$ stand respectively  for the divergence and gradient operators with respect to a set of variables $\mathfrak{m}$; while $\nabla$ is reserved for the gradient operator with respect to the full spatial variables, that is $\nabla = \nabla_x$. Let $r=|x|$. We define the radial and angular derivatives as 
\[
\p_r = \frac{x}{|x|}\cdot \nabla, \quad \quad \Omega_{j k}=x_j \partial_k - x_k \p_j, \quad j, k=1, 2, \ldots, n. 
\]
Note that
\begin{equation*}
\abs{\nabla \alpha (x)}^2 = \abs{\p_r \alpha}^2 + \frac{1}{2r^2} \sum_{\underset{i, j=1, 2, \ldots, n}{i \neq j}} \abs{\Omega_{i  j}\alpha (x)}^2.
\end{equation*}
We also adopt the notation: $\alpha_z:= \p_z  \alpha$, $\alpha_t:=\p_t  \alpha$ and $\alpha_r:=\p_r \alpha$. 

\begin{proof}[Proof of Lemma \ref{lemma:normal_estimate}]
This result is quite similar to Lemma  \cite[Lemma 3.3]{RakeshSalo1}. Define
\[
H_T= \left\{ (y,z,t): \abs{(y,z)}\geq 1, \;  -T\leq t\leq T,\;  t\geq z \right\}.
\]
The following identities will be useful in our computations
\begin{align*}
2\alpha_t \,\square\, \alpha&= {\text{div}}_{x,t} \left(-2 \alpha_t \nabla \alpha, \, \alpha_t^2+ \abs{\nabla \alpha}^2\right),\\
2(x\cdot \nabla \alpha)\, \square \,\alpha&=  {\text{div}}_{x,t}  \left( x(\abs{\nabla\alpha}^2- \alpha_t^2)-2(x\cdot \nabla \alpha)\nabla \alpha,  2\alpha_t (x\cdot \nabla \alpha)  \right)+ n\alpha_t^2- (n-2)\abs{\nabla \alpha}^2.
\end{align*}

Thanks to the domain of dependence theorem for the wave equation $\square\, \alpha=0$ in $H_T$, see for instance \cite[Section 2.4, Theorem 6]{Evans1997}, we deduce that $\alpha$ is compactly supported for each fixed $t$. In particular, for each fixed $t$, one integral involving $\alpha$ on $\abs{x}\geq 1$ is actually on $M\geq \abs{x}\geq 1$ for a large enough $M>1$. This fact will be used several times throughout this proof. 

On the one hand, integrating the first identity over the region $H_T\cap \left\{ t\leq \tau \right\}$ for any fixed $\tau \in [-T, T]$, and combining Stoke's theorem with the third equation in \eqref{eq:alpha}, we obtain
\begin{equation}\label{id:different_normal}
\begin{aligned}
&0= \int_{\p(H_T\cap \left\{ t\leq \tau \right\})} \nu(y,z,t)\cdot  \left(-2 \alpha_t \nabla \alpha, \alpha_t^2+ \abs{\nabla \alpha}^2\right) dS\\
& =  \int_{H_T\cap \left\{ t=\tau \right\}} \left(\alpha_t^2+\abs{\nabla \alpha}^2\right)dx  +2 \int_{\Sigma_+\cap \left\{ t\leq \tau \right\}} \alpha_r \alpha_t \,dS\\
&\quad  -\frac{1}{\sqrt{2}}  \int_{H_T\cap \left\{ t=z \right\}} \left( \alpha_t^2+ \abs{\nabla \alpha}^2+ 2\alpha_t\alpha_z\right)dy dz .
\end{aligned}
\end{equation}
We have used that the unit normal vectors $\nu(y,z,t)$ are respectively equal to $(0, 0,1)$, $\frac{1}{\sqrt{2}}(0,1,-1)$ and $-\frac{1}{\abs{x}}(y,z,0)$ on the regions $H_T\cap \left\{ t=\tau \right\}$, $H_T\cap \left\{ t=z \right\}$ and $\Sigma_+\cap \left\{ t\leq \tau \right\}$. A domain of dependence argument shows that  $H_T\cap \supp \alpha$ is bounded and far away from the origin, thus $1/\abs{x}$ is well defined on that region. In particular, $\alpha_r$ is well defined on $H_T\cap \supp \alpha$.

On the other hand, note that $\alpha_t^2+ \abs{\nabla \alpha}^2+ 2\alpha_t\alpha_z= \abs{\nabla_y \alpha}^2 + (Z\alpha)^2$ and $Z\alpha=0$ on $H_T\cap \left\{ t=z\right\}$. Combining these facts with Young's inequality applied with $\epsilon>0$, we get from identity \eqref{id:different_normal} that 
\begin{equation}\label{eq:first_useful}
\begin{aligned}
 \int_{H_T\cap \left\{ t=\tau \right\}} \left(\alpha_t^2+\abs{\nabla \alpha}^2\right)dx& \leq \epsilon \int_{\Sigma_+} \alpha_r^2 \,dS + \epsilon^{-1}\int_{\Sigma_+} \alpha_t^2 \,dS \\
 &\quad  + \frac{1}{\sqrt{2}}\int_{H_T\cap \left\{ t=z \right\}} \abs{\nabla_y\,\alpha (y,z,z )}^2 dydz.
 \end{aligned}
\end{equation}
In a similar fashion, now integrating the second useful identity over $H_T$, we obtain
\begin{equation}\label{eq:second_useful}
\begin{aligned}
& \int_{\Sigma_+} \left(2 \alpha_r^2 +\alpha_t^2-\abs{\nabla \alpha}^2\right)dS= \int_{H_T}\left( (n-2)\abs{\nabla \alpha}^2 - n\alpha_t^2\right) dxdt \\
 & \qquad  \qquad  \qquad -2 \int_{\left\{\abs{x}\geq 1\right\}\cap \left\{t=T\right\}}\alpha_t (x\cdot \nabla \alpha) \, dx-\frac{1}{\sqrt{2}} \int_{H_T\cap \left\{ t=z\right\}}z (\abs{\nabla \alpha}^2 - \alpha_t^2)    \\
 &  \qquad  \qquad  \qquad = \int_{H_T}\left( (n-2)\abs{\nabla \alpha}^2 - n\alpha_t^2\right) dxdt \\
 & \qquad  \qquad  \qquad -2 \int_{\left\{\abs{x}\geq 1\right\}\cap \left\{t=T\right\}}\alpha_t (x\cdot \nabla \alpha)\, dx-\frac{1}{\sqrt{2}} \int_{H_T\cap \left\{ t=z\right\}}z   \abs{\nabla_y \alpha (y,z, z)}^2 dy dz,
 \end{aligned}
\end{equation}
where we used that $\abs{(y,z)}=1$ on $\Sigma_+$ and $\alpha_t^2=\alpha_z^2$ on $H_T\cap \left\{ t=z\right\}$.

Integrating inequality \eqref{eq:first_useful} in time from $-T$ to $T$, and using the resultanting estimate into \eqref{eq:second_useful}, we deduce
\begin{align*}
\int_{\Sigma_+} (\alpha_r^2 + \alpha_t^2)\, dS& \lesssim \abs{ \int_{\Sigma_+} \left(\abs{\nabla \alpha(x)}^2-\alpha_r^2\right)dS} + \int_{-T}^T\int_{H_T\cap \left\{t=\tau \right\}} \left(\alpha_t^2 + \abs{\nabla \alpha}^2 \right) dxd\tau \\
&\quad +\int_{H_T \cap \left\{ t=T \right\}} \left( \alpha_t^2+ \abs{\nabla \alpha}^2\right)dx+  \int_{H_T\cap \left\{ t=z\right\}} \abs{\nabla_y \alpha (y, z, z)}^2 dy dz\\
 & \leq   \sum_{j<k}   \int_{\Sigma_+}\abs{\Omega_{jk}\alpha(x)}^2\, dS + (1+ (2T+1)\epsilon^{-1})\int_{\Sigma_+}  \alpha_t^2\, dS\\
 &\quad  + (2T+1)\epsilon \int_{\Sigma_+}  \alpha_r^2\, dS+ (\sqrt{2}(T+1)+1)  \int_{H_T\cap \left\{ t=z\right\}} \abs{\nabla_y \alpha (y,z,z)}^2 dy dz.
\end{align*}
Here we have also used \eqref{eq:first_useful} with $\tau=T$ to bound the integral on $H_T\cap \bk{t=T}$. Choosing $\epsilon$ small enough, we obtain
\begin{equation}\label{eq:radial_estimate}
\int_{\Sigma_+} \alpha_r^2 \, dS \lesssim \norm{\alpha}^2_{H^1(\Sigma_+)}+  \int_{H_T\cap \left\{ t=z\right\}} \abs{\nabla_y \alpha (y,z, z)}^2 dy dz.
\end{equation}
The last term on the right can be bounded  using \eqref{eq:alpha_t=z}. Indeed 
\begin{equation*}
\begin{aligned}
\int_{H_T\cap \left\{ t=z\right\}} \abs{\nabla_y \alpha (y,z, z)}^2 dy dz& \lesssim  \int_{\abs{y}\leq 1}\int_{\sqrt{1-\abs{y}^2}\leq z \leq T}\abs{\nabla_y \alpha (y, z, z))}^2 dzdy\\
&  = \int_{\abs{y}\leq 1}\int_{\sqrt{1-\abs{y}^2}\leq z \leq T}\abs{\nabla_y \beta (y)}^2 dzdy \\
&\leq  T \varepsilon^{-1} \int_{\abs{y}\leq 1} \frac{1}{\sqrt{1-\abs{y}^2}}\abs{ {\sqrt{1-\abs{y}^2}} \nabla_y \beta (y)}^2 dy,
\end{aligned}
\end{equation*}
where we used that $\beta$ vanishes on $\abs{y}\geq 1-\varepsilon$. Since $\beta$ is independent of $z$, and the tangential part of the gradient to the unit sphere is $z\nabla_y- y\p_z$, we have
\[
\int_{H_T\cap \left\{ t=z\right\}} \abs{\nabla_y \alpha (y,z, z)}^2 dy dz \lesssim \varepsilon^{-1}  \int_{\abs{y}\leq 1}  \abs{ (z \nabla_y - y\nabla_z) \alpha (y,z,z)}^2 dS \leq  \varepsilon^{-1} \norm{\alpha}^2_{H^1(\Sigma_+ \cap \left\{t=z \right\})}.
\]
This estimate and \eqref{eq:radial_estimate} implies
\[
\int_{\Sigma_+} \alpha_r^2 \, dS \lesssim   \varepsilon^{-1} \left(\norm{\alpha}^2_{H^1(\Sigma_+)}+  \norm{\alpha}^2_{H^1(\Sigma_+\cap \Gamma)}\right).
\]
Let $\chi \in C^{1}(\overline{Q})$. The chain rule shows that it also holds for $\chi \alpha$ in place of $\alpha$, where the implicit constant is proportional to $\norm{\chi}_{C^1(\overline{Q})}$. This finishes the proof. 
\end{proof}

\begin{proof}[Proof of Lemma \ref{lemma:energy_T}] Following the proof of \cite[Lemma 3.4]{RakeshSalo1}, we shall prove the estimate
 \[
 \int_B (\abs{\nabla_{x,t}\alpha}^2 +\abs{\alpha}^2)(x, \tau)dx \lesssim \norm{\alpha}^2_{H^1(\Gamma)} + \norm{(\square+ 2\W\cdot \nabla + V)\alpha}^2_{L^2(Q_+)} + \norm{\alpha}^2_{H^1(\Sigma_+)}  + \norm{\p_\nu \alpha}^2_{L^2(\Sigma_+)}  
 \]
 for every $\tau\in (-1,T]$. Although the desired estimate is just the case $\tau=T$, our proof involves Gronwall's inequality, so we need to consider the left-hand side information when $\tau$ is also far way from $T$. Due to this, we split the energy level into two cases depending on  whether or not  the intersection of $t=\tau$ with $t=z$ is inside $Q_+$. When $\tau \in (-1, 1]$, define
  \begin{align*}
 E(\tau)&:= \int_{B\cap \left\{z\leq \tau \right\}} (\alpha^2+ \alpha_t^2+ \abs{\nabla \alpha}^2)(y,z, \tau)dz dy,\\
 J(\tau)&:=  \int_{B\cap \left\{z\leq \tau \right\}}  (\alpha^2+ (Z\alpha)^2 + \abs{\nabla_y \alpha}^2) (y,z,z)dzdy,
 \end{align*}
 and when $\tau\in(1, T]$, define
 \begin{align*}
 E(\tau)&:= \int_{B} (\alpha^2+ \alpha_t^2+ \abs{\nabla \alpha}^2)(y,z, \tau)dz dy,\\
 J&:= J(1).
 \end{align*}
 A straightforward computation shows
 \[
 2\alpha_t (\square \, \alpha + \alpha)= \text{div}_{x,t}(-2\alpha_t \nabla \alpha, \alpha_t^2+\abs{\nabla \alpha}^2+ \alpha^2).
 \]
For any $\tau\in (-1, T]$, integrating this identity over the region $Q_+\cap \left\{ t\leq \tau\right\}$, and using Stoke's theorem, we obtain
\begin{align*}
2\int_{Q_+\cap \left\{ t\leq \tau\right\}} \alpha_t (\square \, \alpha + \alpha)&=  \int_{\p(Q_+\cap \left\{ t\leq \tau \right\})} \nu(y,z,t)\cdot  \left(-2 \alpha_t \nabla \alpha,\alpha_t^2+\abs{\nabla \alpha}^2+ \alpha^2 \right) dS\\
& =  -\int_{Q_+\cap \left\{ t=\tau \right\}} \left(\alpha_t^2+\abs{\nabla \alpha}^2+\alpha^2\right)dx -2 \int_{\Sigma_+\cap \left\{ t\leq \tau \right\}} \alpha_r \alpha_t \,dS\\
&\quad  +\frac{1}{\sqrt{2}}  \int_{Q_+\cap \left\{ t=z \right\}} \left( \alpha_t^2+ \abs{\nabla \alpha}^2+ 2\alpha_t\alpha_z + \alpha^2\right)dy dz.
\end{align*}
Now using Young's inequality with $\epsilon=1$, we get for all $\tau\in (-1, T]$
\begin{align*}
E(\tau)& \lesssim J + \int_{\Sigma_+} (\alpha_t^2 + \alpha_r ^2)dS + \int_{Q_+\cap \left\{ t\leq \tau\right\}} 2|\alpha_t \left((\square +2\W\cdot \nabla +V ) \alpha + (1-V-2\W\cdot \nabla) \alpha\right)|\\
& \lesssim J + \int_{\Sigma_+}( \alpha_t^2 + \alpha_r ^2)dS + \int_{Q_+}\abs{(\square + 2\W\cdot \nabla + V)\alpha}^2 + \int_{Q_+} \alpha^2+ \alpha_t^2 + \abs{\nabla \alpha}^2\\
& \leq J + \int_{\Sigma_+}( \alpha^2 + \abs{\nabla\alpha}^2+\alpha_t^2+\alpha_r^2)dS  + \int_{Q_+}\abs{(\square + 2\W\cdot \nabla + V)\alpha}^2 + \int_0^\tau E(t)dt.
\end{align*}
Note that the implicit constant depends on $\norm{V}_{L^\infty}$ and $\norm{\W}_{L^\infty}$. It follows from the first line in above estimate. Applying Gronwall's inequality, we have for all $\tau\in (-1, T]$
\[
E(\tau) \lesssim J +  \int_{Q_+}\abs{(\square + 2\W\cdot \nabla + V)\alpha}^2 +  \int_{\Sigma_+}( \alpha^2 + \abs{\nabla\alpha}^2+\alpha_t^2+\alpha_r^2)dS.
\]
The proof is now complete by taking $\tau=T$. 
\end{proof}

\begin{proof}[Proof of Lemma \ref{lemma:energy_estimate_gamma}]
As in the proof of \cite[Lemma 3.5]{RakeshSalo1}, we first set $v=e^{\sigma \psi}\alpha $. For any $\tau\in [1,T]$, define the energy corresponding to $t=\tau$ and $t=z$ as follows
 \begin{align*}
 E(\tau)&:= \int_{B} (\sigma^2 v^2+ v_t^2+ \abs{\nabla v}^2)(y,z, \tau)dz dy,\\
 J&:= \int_B (\sigma^2 v^2+ (Zv)^2 + \abs{\nabla_y v}^2) (y,z,z)dzdy.
 \end{align*}
For any $\tau\in [1, T]$, integrating the identity
 \[
 2 v_t (\square \, v + \sigma^2 v)= \text{div}_{x,t}(-2v_t \nabla v, v_t^2+\abs{\nabla v}^2+ \sigma^2 v^2).
 \]
over the region $Q_+\cap \left\{ t\leq \tau\right\}$, and using Stoke's theorem, we obtain

\begin{align*}
2\int_{Q_+\cap \left\{ t\leq \tau\right\}} v_t (\square \, v + \sigma^2 v)&=  \int_{\p(Q_+\cap \left\{ t\leq \tau \right\})} \nu(y,z,t)\cdot  \left(-2v_t \nabla v, v_t^2+\abs{\nabla v}^2+ \sigma^2 v^2 \right) dS\\
& = - \int_{Q_+\cap \left\{ t=\tau \right\}} \left(v_t^2+\abs{\nabla v}^2+\sigma^2 v^2\right)dx -2 \int_{\Sigma_+\cap \left\{ t\leq \tau \right\}} v_r v_t \,dS\\
&\quad  +\frac{1}{\sqrt{2}}  \int_{Q_+\cap \left\{ t=z \right\}} \left( v_t^2+ \abs{\nabla v}^2+ 2v_tv_z + \sigma^2 v^2\right)dy dz.
\end{align*}
Using Young's inequality with $2\sigma v_t(\sigma v)\leq \sigma( \sigma^2 v^2 + v_t^2 )$, we get
\begin{align*}
J& \lesssim E(\tau)+ 2\int_{\Sigma_+ \cap \left\{t\leq \tau \right\}} \abs{v_t  v_r} dS + \int_{Q_+\cap \left\{ t\leq \tau\right\}} 2|v_t \left((\square +2\W\cdot \nabla +V ) v + (\sigma^2-2\W\cdot \nabla-V) v\right)|\\
& \lesssim E(\tau) + 2\int_{\Sigma_+} \abs{v_t  v_r} dS  + \int_{Q_+}2\abs{v_t(\square + 2\W\cdot \nabla + V)v} + \sigma \int_{Q_+} v_t^2 + \abs{\nabla v }^2+ \sigma^2 v^2,
\end{align*}
whenever $\sigma^2\geq 1+ \norm{V}_{L^\infty}+2\norm{\W}_{L^\infty}$. Since $J$ is independent of $\tau$, integrating this estimate in $\tau$ from $1$ to $T$, we have
\begin{equation*}
J \lesssim  \int_{\Sigma_+} \abs{v_t  v_r} dS  + \int_{Q_+}\abs{v_t(\square + 2\W\cdot \nabla + V)v} + \sigma \int_{Q_+} v_t^2 + \abs{\nabla v }^2+ \sigma^2 v^2.
\end{equation*}
The right-hand side can be bounded by integral terms involving weighted versions of $\alpha$, namely, terms given by the right-hand side of \eqref{est:top}. This can be seen by using the following inequalities:
\begin{align*}
&\abs{v}\leq e^{\sigma \psi} \abs{\alpha}, \quad \abs{v_t} + \abs{\nabla v}\leq e^{\sigma\psi} (\abs{\alpha_t} + \abs{\nabla \alpha}+ \sigma \abs{\alpha}),\\
& \abs{(\square + 2\W\cdot \nabla + V) v}\lesssim  e^{\sigma \psi} (\abs{(\square + 2\W\cdot \nabla + V) \alpha} + \sigma (\abs{\alpha_t}+ \abs{\nabla \alpha})+ \sigma^2 \abs{\alpha}),\\
& 2\sigma \abs{\alpha_t} \abs{\alpha} \leq \alpha_t^2 + \sigma^2 \alpha^2, \quad 2\sigma \abs{\nabla\alpha} \abs{\alpha} \leq \abs{\nabla \alpha}^2 + \sigma^2 \alpha^2.
\end{align*}
Finally, we complete the proof by using the estimate below
\[
\int_{B} e^{2\sigma \psi} (\sigma^2 \alpha^2+ (Z\alpha)^2 + \abs{\nabla_y \alpha}^2) (y,z,z) \, dzdy\lesssim  J.
\]
\end{proof}

\bibliographystyle{alpha}

\end{document}